\newtheorem{thm}{Theorem}
\newtheorem{lemma}{Lemma}[section]
\newtheorem{cor}{Corollary}
\newtheorem{rk}{Remark}[section]
\newcommand\be{\begin{equation}}
\newcommand\ee{\end{equation}}
\newcommand\bea{\begin{eqnarray}}
\newcommand\eea{\end{eqnarray}}
\newcommand\beaa{\begin{eqnarray*}}
\newcommand\eeaa{\end{eqnarray*}}
\newcommand\bay{\begin{array}}
\newcommand\eay{\end{array}}
\newcommand\ba{\begin{align}}
\newcommand\ea{\end{align}}
\newcommand{\R}{\mathbb{R}}
\newcommand\dps{\displaystyle}
\begin{document}

\title[Spreading with two speeds and mass segregation]
{Spreading with two speeds and mass segregation in a diffusive competition system with  free boundaries}
\author{Yihong Du}
\address{Y. Du: School of Science and Technology, University of New England, Armidale, NSW 2351, Australia.}
\email{ydu@une.edu.au}

\author{Chang-Hong Wu}
\address{C-H. Wu: Department of Applied Mathematics, National University of Tainan, Tainan 700, Taiwan}
\email{changhong@mail.nutn.edu.tw}

\thanks{Date: \today.}

\thanks{{\em 2010 Mathematics Subject Classification.} 35K51, 35R35, 92B05.}

\thanks{{\em Key words and phrases: free boundary problem, Lotka-Volterra competition model, asymptotic spreading speed.}}

\begin{abstract}
We investigate the spreading behavior of two invasive species modeled by a
Lotka-Volterra diffusive competition system with two free boundaries in a  spherically symmetric setting.
We show that, for the weak-strong competition case, under suitable assumptions, both species in the system can successfully spread
into the available environment, but their spreading speeds are different, and their population masses tend to segregate, with
 the slower spreading competitor having its population concentrating on an expanding ball, say $B_t$, and the faster spreading competitor
 concentrating on a spherical shell outside $B_t$ that disappears to infinity as time goes to infinity.
\end{abstract}

\maketitle
\setlength{\baselineskip}{18pt}
%%%%%%%%%%%%%%%%%%%%%%%%%%%%%%%%%%%%%%%%%%%%%%%%%%%%

\section{Introduction}
\setcounter{equation}{0}
In this paper, we investigate the  spreading behavior of two competing species
described by the following free boundary problem in $\mathbb{R}^N$ ($N\geq1$) with spherical symmetry:
\beaa
{\bf(P)}\hspace{1cm}\left\{
 \begin{array}{ll}
 u_t=d\Delta u+ru(1-u-kv) \mbox{ for } 0<r<s_1(t),\ t>0,\\
 v_t=\Delta v+v(1-v-hu) \mbox{ for } 0<r<s_2(t),\ t>0,\\
 u_r(0,t)=v_r(0,t)=0 \mbox{ for } t>0,\\
 u\equiv0\;\mbox{for all}\  r\geq s_1(t)\ \mbox{and}\ t>0,\;   v\equiv0\; \mbox{for all}\   r\geq s_2(t)\ \mbox{and}\ t>0,\\
 s_{1}'(t)=-\mu_1 u_r(s_1(t),t) \mbox{ for } t>0,\;  s_{2}'(t)=-\mu_2 v_r(s_2(t),t) \mbox{ for } t>0,\\
 s_1(0)=s_1^0,\ s_2(0)=s_2^0,\ u(r,0)=u_0(r),\ v(r,0)=v_0(r)\ \mbox{for}\ r\in[0,\infty),
  \end{array}
\right.
\eeaa
where $u(r,t)$ and $v(r,t)$ represent the population densities of  the two competing species at spatial location
$r\, (=|x|)$ and time $t$; $\Delta \varphi:=\varphi_{rr}+\frac{(N-1)}{r}\phi_r$  is the usual Laplace operator acting on spherically symmetric functions.
All the parameters are assumed to be positive, and without loss of generality, we have used a simplified version of the  Lotka-Volterra competition model, which can be obtained from the general model  by a standard change of variables procedure (see, for example, \cite{DWZ}).
The initial data $(u_0,v_0,s_1^0,s_2^0)$ satisfies
\bea\label{ic}
\left\{
 \begin{array}{ll}
s_1^0>0,\ s_2^0>0,\ u_0\in C^2([0,s_1^0]),\ v_0\in C^2([0,s_2^0]),\ u_0'(0)=v_0'(0)=0,\\
u_0(r)>0\ \mbox{for}\ r\in[0,s_1^0),\ u_0(r)=0\ \mbox{for}\ r\geq s_1^0,\\
v_0(r)>0\ \mbox{for}\ r\in[0,s_2^0),\ v_0(r)=0\ \mbox{for}\ r\geq s_2^0.
 \end{array}
\right.
\eea
In this model, both species invade the environment through their own free boundaries: the species $u$ has a spreading front at $r=s_1(t)$, while $v$'s spreading front is at $r=s_2(t)$. For the mathematical treatment, we have extended $u(r,t)$ from its population range $r\in [0, s_1(t)]$
to $r>s_1(t)$ by 0, and
extended $v(r,t)$ from $r\in [0, s_2(t)]$ to $r>s_2(t)$ by 0.

{The global existence and uniqueness of the solution to problem {\bf(P)} under \eqref{ic} can be established
by  the approach in \cite{GW2} with suitable changes. In fact, the local existence and uniqueness proof can cover a rather general class of such free boundary systems. The assumption in {\bf (P)} that $u$ and $v$ have independent free boundaries causes some difficulties but this can be handled by following the approach in \cite{GW2} with suitable modifications and corrections. The details are given in the Appendix at the end of the paper.

We say $(u,v,s_1, s_2)$ is a (global classical) solution of {\bf (P)}  if
\beaa
(u,v,s_1,s_2)\in C^{2,1}(D^1)\times C^{2,1}(D^2)\times C^1([0,+\infty))\times C^1([0,+\infty)),
\eeaa
where
\[
D^1:=\{(r,t): r\in [0, s_1(t)], \; t>0\},\;\; D^2:=\{(r,t): r\in [0, s_2(t)], \; t>0\},
\]
and all the equations in {\bf (P)} are satisfied pointwisely.
By the Hopf boundary lemma, it is easily seen that, for $i=1,2$ and $t>0$, $s_i'(t)>0$. Hence
\[
\mbox{ $s_{i,\infty}:=\lim_{t\to\infty} s_i(t)$ }
\]
 is well-defined.

We are interested in the long-time behavior of {\bf(P)}. In order to gain a good understanding, we focus on some interesting special cases.
Our first assumption is that
\begin{equation}
\label{h-k}
0<k<1<h.
\end{equation}
It is well known that under this assumption, when restricted over a fixed bounded domain $\Omega$ with no-flux boundary conditions,
the unique solution $(\tilde u(x,t), \tilde v(x,t))$ of the corresponding problem of {\bf (P)} converges to $(1,0)$ as $t\to\infty$ uniformly for $x\in\overline\Omega$. So in the long run, the species $u$ drives $v$ to extinction and wins the competition. For this reason,
condition \eqref{h-k} is often referred to as the case that  $u$ is  superior  and $v$ is  inferior  in the competition.
This is often referred to as a weak-strong competition case. A symmetric situation is $0<h<1<k$.

The case $h,k\in (0,1)$ is called the weak competition case (see \cite{Wu2}), while the case $h,k\in (1,+\infty)$ is known as the strong competition case. In these cases, rather different long-time dynamical behaviors are expected.

In this paper, we will focus on problem {\bf (P)} for the weak-strong competition case \eqref{h-k}, and demonstrate
a rather interesting phenomenon, where $u$ and $v$ both survive in the competition, but they
spread into the new territory with different speeds, and their population masses tend to segregate,
with the population mass  of $v$ shifting to infinity as $t\to\infty$.

For {\bf (P)} with space dimension $N=1$, such a phenomenon was discussed in \cite{GW2}, though less precisely than here.
It is shown in Theorem 5 of \cite{GW2}
that under \eqref{h-k} and some additional conditions, both species can spread successfully, in the sense that
\begin{itemize}
\item[(i)] $s_{1,\infty}=s_{2,\infty}=\infty$,
\item[(ii)]
there exists $\delta>0$ such that for all $t>0$,
\[
\mbox{$u(x,t)\geq \delta $ for $x\in I_u(t)$,\; $v(x,t)\geq \delta$ for $x\in I_v(t)$,}
\]
where $I_u(t)$ and $I_v(t)$ are intervals of length at least $\delta$ that vary continuously in $t$.
\end{itemize}
At the end of the paper \cite{GW2}, the question of determining the spreading speeds for both species was raised as an open problem for future investigation.

In this paper, we will determine, for such a case,
$
\lim_{t\to\infty}\frac{s_i(t)}{t},\; i=1,2$; so in particular, the open problem of \cite{GW2} on the spreading speeds is resolved here.
Moreover, we also obtain a much better understanding of the long-time behavior of $u(\cdot, t)$ and $v(\cdot, t)$,
for all dimensions $N\geq 1$. See Theorem 1 and Corollary 1 below for details.

A crucial new ingredient (namely $c^*_{\mu_1}$ below) in our approach here comes from recent research on another closely related problem
 (proposed in \cite{DL2}):
\beaa{\bf(Q)}\hspace{1.5cm}
\begin{cases}
u_t=d \Delta u+ru(1-u-kv),\quad 0\leq r<h(t),\ t>0,\\
v_t=\Delta v+v(1-v-hu),\quad 0\leq r<\infty,\ t>0,\\
u_r(0,t)=v_r(0,t)=0,\ u(r,t)=0,\quad h(t)\leq r<\infty,\ t>0,\\
h'(t)=-\mu_1 u_r(h(t),t),\quad t>0,\\
h(0)=h_0, u(r,0)=\hat{u}_0(r),\quad  0\leq r\leq h_0,\\
v(r,0)=\hat{v}_0(r),\quad 0\leq r<\infty,
\end{cases}
\eeaa
where $0<k<1<h$ and
\bea\label{ic-Q}
\left\{
 \begin{array}{ll}
\hat{u}_0\in C^2([0,h_0]),\ \hat{u}_0'(0)=\hat{u}_0(h_0)=0,\ \hat{u}_0>0\ \mbox{in $[0,h_0)$},\\
\hat{v}_0\in C^2([0,\infty))\cap L^{\infty}((0,\infty)),\ \hat{u}_0'(0)=0,\ \hat{v}_0\geq(\not\equiv)0\ \mbox{in $[0,\infty)$}.
 \end{array}
\right.
\eea
In problem  {\bf(Q)} the inferior competitor $v$ is assumed to be a native species already established  in the environment,
while the superior competitor $u$ is invading the environment via the free boundary $r=h(t)$. Theorem 4.3 in \cite{DL2} gives a
spreading-vanishing dichotomy for {\bf (Q)}: Either
\begin{itemize}
\item {(\it Spreading of $u$}) $\lim_{t\to\infty} h(t)=\infty$ and
\[
\lim_{t\to\infty} (u(r, t), v(r,t))=(1,0) \mbox{ locally uniformly for $r\in [0,\infty)$, or}
\]
\item {(\it Vanishing of $u$}) $\lim_{t\to\infty} h(t)<\infty$ and
\[
\lim_{t\to\infty} (u(r, t), v(r,t))=(0,1) \mbox{ locally uniformly for $r\in [0,\infty)$.}
\]
\end{itemize}
Sharp criteria for spreading and vanishing of $u$ are also given in \cite{DL2}. When spreading of $u$ happens,
an interesting question is whether there exists an asymptotic spreading speed, namely whether $\lim_{t\to\infty}\frac{h(t)}{t}$ exists.
This kind of questions, similar to the one being asked in \cite{GW2} mentioned above, turns out to be rather difficult to answer for systems of equations with free boundaries.
Recently, Du, Wang and Zhou \cite{DWZ} successfully established the spreading speed for {\bf (Q)}, by making use of
 the following so-called semi-wave system:
\bea\label{semi-wave system}
\begin{cases}
cU'+d U''+rU(1-U-kV)=0,\quad -\infty<\xi<0,\\
cV'+V''+v(1-V-hU)=0,\quad -\infty<\xi<\infty,\\
U(-\infty)=1,\quad U(0)=0,\quad U'(\xi)<0 =U(-\xi),\ \xi<0,\\
V(-\infty)=0,\quad V(+\infty)=1,\quad V'(\xi)>0,\ \xi\in\R.
\end{cases}
\eea
It was shown that \eqref{semi-wave system} has a unique solution if $c\in[0, c_0)$, and it has no solution if $c\geq c_0$,
where
\[
c_0\in[\,2\sqrt{rd(1-k)},2\sqrt{rd}\,]
\]
 is the minimal speed for the traveling wave solution studied in \cite{Kan-on}.
More precisely,  the following result holds:
\smallskip

\noindent
{\bf Theorem A.} (Theorem\, 1.3 of \cite{DWZ}) {\it
Assume that $0<k<1<h$. Then for each $c\in[0,c_0)$, \eqref{semi-wave system} has a unique solution
$(U_c,V_c)\in [C(\R)\cap C^2([0,\infty))]\times C^2(\R)$, and it has no solution for $c\geq c_0$. Moreover,
\begin{itemize}
\item[(i)] if $0\leq c_1< c_2<c_0$, then
\beaa
U'_{c_1}(0)<U'_{c_2}(0),\; U_{c_1}(\xi)>U_{c_2}(\xi) \mbox{ for } \xi<0,\; V_{c_2}(\xi)>V_{c_1}(\xi) \mbox{ for } \xi\in\R;
\eeaa
\item[(ii)] the mapping $c\mapsto(U_c,V_c)$ is continuous from $[0,c_0)$ to $C^2_{loc}((-\infty,0])\times C^2_{loc}(\R)$ with
\beaa
\lim_{c\to c_0}(U_c,V_c)=(0,1)\quad\mbox{in $C^2_{loc}((-\infty,0])\times C^2_{loc}(\R)$};
\eeaa
\item[(iii)] for each $\mu_1>0$, there exists a unique $c=c^*_{\mu_1}\in(0,c_0)$ such that
\beaa
\mu_1 U'_{c^*_{\mu_1}}(0)=c^*_{\mu_1}\quad \mbox{and $c^*_{\mu_1}\nearrow c_0$ as ${\mu_1}\nearrow\infty$}.
\eeaa
\end{itemize}
}

The spreading speed for {\bf(Q)} is established as follows.
\smallskip

\noindent
{\bf Theorem B.} (Theorem\, 1.1 of \cite{DWZ}) {\it
Assume that $0<k<1<h$. Let $(u,v,h)$ be the solution of {\bf(Q)} with \eqref{ic-Q} and
\bea\label{inf v0}
\liminf_{r\to\infty} \hat{v}_0(r)>0.
\eea
If $h_{\infty}:=\lim_{t\to\infty}h(t)=\infty$,
then
\beaa
\lim_{t\to\infty}\frac{h(t)}{t}=c^*_{\mu_1},
\eeaa
where $c^*_{\mu_1}$ is given in Theorem~A.
}

It turns out that $c^*_{\mu_1}$ also plays an important role in determining the long-time dynamics of {\bf (P)}.
In order to describe the second crucial number for the dynamics of {\bf (P)} (namely $s^*_{\mu_2}$ below),
let us recall that, in the absence of the species $u$, problem {\bf(P)} reduces to a single species  model
 studied by Du and Guo \cite{DG}, who
generalized the model proposed by Du and Lin \cite{DL} from one dimensional space to high dimensional space with spherical symmetry.
In such a case, a spreading-vanishing dichotomy holds for $v$, and when spreading happens, the spreading speed of $v$
is %determined by the following problem
related to the following problem
\bea\label{semi-wave eq}
\left\{
 \begin{array}{ll}
 dq''+sq'+q(a-bq)=0\quad \mbox{in $(-\infty,0)$},\\
 q(0)=0,\quad q(-\infty)=a/b,\quad q(\xi)>0\quad \mbox{in $(-\infty,0)$}.
  \end{array}
\right.
\eea
More precisely, by Proposition 2.1 in \cite{BDK} (see also  Proposition 1.8 and Theorem 6.2 of \cite{DLou}), the following result holds:
\smallskip

\noindent
{\bf Theorem C.} {\it
For fixed {$a,b,d,\mu_2>0$}, there exists a unique
%$s=s^*_{\mu_2}\in (0, 2) $
{$s=s^*(a,b,d,\mu_2)\in (0, 2\sqrt{ad})$}
and
a unique solution $q^*$ to \eqref{semi-wave eq} with {$s=s^*(a,b,d,\mu_2)$} %$s=s^*_{\mu_2}$
such that %$(q^*)'(0)=-s^*_{\mu_2}/\mu_2$.
{$(q^*)'(0)=-s^*(a,b,d,\mu_2)/\mu_2$}.
Moreover, $(q^*)'(\xi)<0$ for all $\xi\leq0$.}

\bigskip

Hereafter, we shall denote $s^*_{\mu_2}:=s^*(1,1,1,\mu_2)$. It turns out that the long-time behavior of {\bf (P)} depends crucially on whether $c^*_{\mu_1}<s^*_{\mu_2}$ or
$c^*_{\mu_1}>s^*_{\mu_2}$.  As demonstrated in Theorems 1 and 2 below, in the former case, it is possible for both species to spread successfully, while in the latter case, at least one species has to vanish eventually.

Let us note that while the existence and uniqueness of $s^*_{\mu_2}$ is relatively easy to establish (and has been  used in \cite{GW2} and other papers to estimate the spreading speeds for various systems), this is not the case for $c^*_{\mu_1}$, which takes more than half of the length of \cite{DWZ} to establish. The main advance of this research from \cite{GW2} is achieved by making use of $c^*_{\mu_1}$.

\begin{thm}\label{thm1}
 Suppose \eqref{h-k} holds and
\begin{equation}
\label{A2}
c^*_{\mu_1}<s^*_{\mu_2}.
\end{equation}
Then one can choose initial functions $u_0$ and $v_0$ properly such that
 the unique solution $(u,v, s_1, s_2)$ of
{\bf (P)} satisfies
\[
\lim_{t\to\infty} \frac{s_1(t)}{t}=c_{\mu_1}^*,\;\; \lim_{t\to\infty}\frac {s_2(t)}{t}=s^*_{\mu_2},
\]
and  for every small $\epsilon>0$,
\begin{equation}\label{u-v-1}
\lim_{t\to\infty} (u(r, t), v(r,t))=(1, 0) \mbox{ uniformly for $r\in [0,(c_{\mu_1}^*-\epsilon)t]$},
\end{equation}
\begin{equation}\label{v-2}
\lim_{t\to\infty}v(r,t)=1 \mbox{ uniformly for $r\in [(c^*_{\mu_1}+\epsilon)t, (s^*_{\mu_2}-\epsilon)t]$}.
\end{equation}
\end{thm}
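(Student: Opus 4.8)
\medskip

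\noindent\emph{Sketch of the approach I would take.} The four conclusions would be obtained together, by comparison arguments anchored on the semi-wave solutions of Theorems~A and~C and on a judicious choice of the initial functions. For $N\ge2$ the curvature term $\frac{N-1}{r}\partial_r$ in $\Delta$ is treated as a lower-order perturbation: along a front $r\sim ct$ it produces an $O(1/t)$ error that is absorbed by shifting the comparison functions, as in \cite{DWZ,DG}. Concretely I would take $s_1^0$ large, $s_2^0\gg s_1^0$, $u_0$ of small amplitude supported in $B_{s_1^0}$, and $v_0$ equal to $1-o(1)$ on $B_{s_2^0-1}$ and dropping to $0$ at $r=s_2^0$. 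With this choice both species spread, i.e.\ $s_{1,\infty}=s_{2,\infty}=\infty$ --- for $v$ via a subsolution near its own front, for $u$ because $s_1^0$ is large and on a fixed large ball the no-flux competition system tends to $(1,0)$ by \eqref{h-k}; one also records the a~priori bounds $0<u,v\le C$ and $\limsup_{t\to\infty}\sup_r u\le1$, $\limsup_{t\to\infty}\sup_r v\le1$, obtained by discarding the competition terms and comparing with scalar ODEs.

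\emph{The two easy one-sided bounds.} For $\liminf_{t\to\infty}s_1(t)/t\ge c^*_{\mu_1}$ I would invoke Theorem~B: let $(u^Q,v^Q,h)$ solve {\bf (Q)} with $h_0\le s_1^0$, $\hat u_0\le u_0$ on $B_{s_1^0}$, and $\hat v_0\ge v_0$ on $[0,\infty)$, with $\hat v_0$ chosen so that $\liminf_{r\to\infty}\hat v_0(r)>0$ and $u^Q$ spreads. The competition system is monotone in the order ``$u$ up, $v$ down'', and a comparison principle for these free boundary systems then yields $u^Q\le u$, $v^Q\ge v$ and $h(t)\le s_1(t)$ for all $t$, whence $\liminf s_1(t)/t\ge\lim h(t)/t=c^*_{\mu_1}$. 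For $\limsup_{t\to\infty}s_2(t)/t\le s^*_{\mu_2}$, note that $v_t\le\Delta v+v(1-v)$ together with $s_2'(t)=-\mu_2v_r(s_2(t),t)$ makes $(v,s_2)$ a subsolution of the scalar free boundary problem of \cite{DL,DG} with coefficients $(1,1,1,\mu_2)$, whose spreading speed is $s^*_{\mu_2}$.

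\emph{The coupled hard bounds.} The remaining two estimates --- $\limsup_{t\to\infty}s_1(t)/t\le c^*_{\mu_1}$ and $\liminf_{t\to\infty}s_2(t)/t\ge s^*_{\mu_2}$ --- must be proved together with the fact that $v$ is close to $1$ on the bulk of the shell $\{s_1(t)<r<s_2(t)\}$. For the $s_1$-bound, fix a small $\epsilon>0$, put $c:=c^*_{\mu_1}+\epsilon$ (so $c<c_0$, and by \eqref{A2} also $c<s^*_{\mu_2}$), and let $(U_c,V_c)$ be the semi-wave of Theorem~A. Since for $c>c^*_{\mu_1}$ the profile $U_c$ moving at speed $c$ over-satisfies its own free-boundary law (immediate from Theorem~A(i),(iii)), the pair $\big(U_c(r-ct-L),\,ct+L\big)$ is a supersolution of the $u$-component and its free boundary \emph{provided} $v(r,t)\ge V_c(r-ct-L)$ wherever $U_c(r-ct-L)>0$. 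Because the supersolution front $ct+L$ runs strictly ahead of $s_1(t)$, the profile $V_c$ evaluated near $r=s_1(t)$ is essentially $0$, so the requirement is vacuous there and bites only on the part of the shell between $s_1(t)$ and $ct+L$, where $u\equiv0$, $v$ solves the scalar KPP equation, and $v$ (being far from both edges) is close to $1>V_c(0)$. Simultaneously, once $s_1(t)\le(c^*_{\mu_1}+\epsilon)t$ is in force, one compares $v$ from below with the solution of a scalar free boundary problem in which the competition from $u$ is replaced by an explicit sink supported on $B_{(c^*_{\mu_1}+\epsilon)t}$; as this ``hole'' grows strictly slower than $s^*_{\mu_2}$, the front of that comparison problem still has speed $s^*_{\mu_2}$ (an auxiliary lemma in the spirit of \cite{DWZ}), giving $\liminf s_2(t)/t\ge s^*_{\mu_2}$. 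The two comparisons are mutually dependent, so I would close the loop by a continuation argument: the large initial gap $s_2^0-s_1^0$ and the nearly-$1$ value of $v_0$ on $B_{s_2^0-1}$ make the hypotheses of both comparisons hold at $t=0$ and, one checks, persist for all $t$. Letting $\epsilon\downarrow0$ then gives $s_1(t)/t\to c^*_{\mu_1}$, $s_2(t)/t\to s^*_{\mu_2}$, and in particular $s_2(t)-s_1(t)\to\infty$ linearly.

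\emph{Refined convergence.} With the speeds known, \eqref{v-2} follows because for large $t$ the set $[(c^*_{\mu_1}+\epsilon)t,(s^*_{\mu_2}-\epsilon)t]$ lies in $\{s_1(t)<r<s_2(t)\}$, where $u\equiv0$ and $v$ solves the scalar KPP equation; squeezing $v$ between the scalar super- and subsolutions produced above and using the sharp scalar fact that spreading solutions tend to $1$ uniformly on $[0,(s^*_{\mu_2}-\epsilon)t]$ yields $v\to1$ there. For \eqref{u-v-1}, for large $t$ one has $[0,(c^*_{\mu_1}-\epsilon)t]\subset B_{s_1(t)}$, and a moving-ball argument shows $u\to1$, $v\to0$ there: because $h>1$, on any region where $u$ stays bounded below $v$ is driven to $0$, after which $u$ rises to $1$, and the loop is closed by comparison with the no-flux competition system on fixed large balls, which converges to $(1,0)$ by \eqref{h-k}. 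I expect the coupled hard bounds of the preceding paragraph --- in particular, arguing that $v$ is developed enough on the shell to serve as the $V_c$-component of a supersolution for $u$ while the very control on the fronts this needs is not yet available --- to be the main obstacle; this is exactly where the new quantity $c^*_{\mu_1}$ of \cite{DWZ} is indispensable, and why the initial data must be prepared with a large gap and $v_0$ close to $1$ on almost all of its support.
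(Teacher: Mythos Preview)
Your overall strategy is right, and you correctly identify the two easy one-sided bounds (lower bound on $s_1$ via comparison with {\bf (Q)} and Theorem~B; upper bound on $s_2$ via the scalar problem). You also correctly locate the difficulty in the coupled pair of ``hard'' bounds. But your proposed resolution of that difficulty has a genuine gap.

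The problem is the circularity you yourself flag. To use $U_c(r-ct-L)$ as a supersolution for $u$ you need $v(r,t)\ge V_c(r-ct-L)$ on the \emph{entire} region $\{r<ct+L\}$, including $r<s_1(t)$ where $u>0$ and $v$ may be small; this is not vacuous, since $V_c(\xi)>0$ for all $\xi$. Conversely, your proposed subsolution for $v$ (a scalar free boundary problem with a sink on $B_{(c^*_{\mu_1}+\epsilon)t}$) presupposes the bound $s_1(t)\le(c^*_{\mu_1}+\epsilon)t$ you are trying to prove. A ``continuation argument'' does not dissolve this: you would need to show that if the inequalities hold on $[0,T]$ they persist to $[0,T+\delta]$, but the comparison only gives $s_1\le ct+L$ and $v\ge\underline V$ \emph{on the hypothesis that they already hold}, which is circular unless you can produce a single pair of differential inequalities that are satisfied unconditionally. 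The auxiliary lemma you invoke (``a hole growing slower than $s^*_{\mu_2}$ does not slow the front'') is also not a triviality and would itself require a construction of the type below.

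The paper breaks the circularity by building one explicit coupled pair $(\overline U,\underline V)$ and applying the comparison principle for the monotone system \emph{once}. The supersolution for $u$ is $\overline U(r,t)=U_\varepsilon(r-l(t))$ with $l'(t)=c^\varepsilon_{\mu_1}$, where $(U_\varepsilon,V_\varepsilon)$ solves a perturbed version of \eqref{semi-wave system}. The subsolution $\underline V$ for $v$ is not the profile $V_c$ alone, nor the solution of an auxiliary free boundary problem; it is an explicit piecewise function: (i) near the $u$-front it equals a modification $\widehat Q^\varepsilon$ of $V_\varepsilon$, engineered (Lemmas~2.3--2.5, using the exponential asymptotics \eqref{AS-V}) so that the differential inequality $c^\varepsilon_{\mu_1}Q'+Q''+Q(1-Q-hU_\varepsilon)\ge0$ holds down to a point $\xi_3$ where $Q$ vanishes; (ii) a flat piece at height $\widehat Q^\varepsilon(\xi_1)\in(1-2\varepsilon,1-\varepsilon)$ in the middle; (iii) a truncated single-species semi-wave $q_{s^\varepsilon}$ (Lemma~2.1) near the $v$-front $g(t)$ with $g'(t)=s^\varepsilon-\sigma_\varepsilon$. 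The point is that the inequalities for $(\overline U,\underline V)$ hold \emph{by construction}, with no reference to the actual solution $(u,v)$, so the comparison lemma applies directly and gives $s_1(t)\le l(t)$, $s_2(t)\ge g(t)$, $u\le\overline U$, $v\ge\underline V$ in one stroke. A first pass with $\varepsilon=\varepsilon_0$ fixed by the initial data gives rough bounds; a second pass, restarting at a large time $T_\varepsilon$ (using Lemma~2.2 to guarantee $v\ge1-\varepsilon$ on an intermediate shell, which replaces your ``$v$ is close to $1$ far from both edges'') sharpens $\varepsilon\to0$. This construction of $\widehat Q^\varepsilon$ --- in particular the delicate tail modification in Lemma~2.4 needed to absorb the $hU_\varepsilon$ term --- is the missing ingredient in your sketch.
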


Before giving some explanations regarding the condition \eqref{A2} and the choices of $u_0$ and $v_0$ in the above theorem, let us first note that the above conclusions indicate that
the $u$ species spread at the asymptotic speed $c^*_{\mu_1}$, while $v$ spreads at the {\it faster} asymptotic speed $s^*_{\mu_2}$. Moreover, \eqref{u-v-1} and \eqref{v-2}
imply that the population mass of $u$ roughly concentrates on the expanding ball $\{r<c^*_{\mu_1}t\}$, while that of $v$
concentrates on the expanding spherical shell  $\{c^*_{\mu_1}t<r< s^*_{\mu_2}t\}$ which shifts to infinity
as $t\to\infty$. We also note that, apart from a relatively thin  coexistence
shell around $r=c^*_{\mu_1}t$,  the population masses of $u$ and $v$ are largely segregated for all large time.
Clearly this gives a more precise description for the spreadings of $u$ and $v$ than that in Theorem 5 of \cite{GW2} (for $N=1$) mentioned above.

We now look at some simple sufficient conditions for \eqref{A2}. We note that
$c^*_{\mu_1}$
is independent of $\mu_2$ and the initial functions.
From the proof of Lemma 2.9 in \cite{DWZ}, we see that
$c^*_{\mu_1}\to 0$ as $\mu_1\to 0$. Therefore when all the other parameters are fixed,
\[
\mbox{\eqref{A2} holds for all small $\mu_1>0$.}
\]
A second sufficient condition can be found by using Theorem A (iii), which implies $c^*_{\mu_1}< c_0\leq 2\sqrt{rd}$ for all $\mu_1>0$.
It follows that
\[
\mbox{\eqref{A2}  holds for all  $\mu_1>0$ provided that $2\sqrt{rd}\leq s^*_{\mu_2}$.
}
\]
{Note that $2\sqrt{rd}\leq s^*_{\mu_2}$ holds if $\sqrt{rd}<1$ and $\mu_2\gg1$ since $s^*_{\mu_2}\to 2$ as $\mu_2\to\infty$.}

\medskip

For the conditions in Theorem~\ref{thm1} on the initial functions $u_0$ and $v_0$, the simplest ones are
given in the corollary below.
\begin{cor}\label{cor1}
Assume \eqref{h-k} and \eqref{A2}. Then there exists a large positive constant $C_0$ depending on $s^0_1$ such that the conclusions of Theorem~\ref{thm1} hold if
\begin{itemize}
\item[(i)] $\|u_0\|_{L^{\infty}([0,s^0_1])}\leq1$ with $s^0_1\geq R^*\sqrt{d/[r(1-k)]}$,
\item[(ii)]  for some $x_0\geq C_0$ and $ L\geq C_0$, $v_0(r)\geq1$ for $r\in[x_0,x_0+L]$.
\end{itemize}
Here $R^*$ is uniquely determined by $\lambda_1(R^*)=1$, where $\lambda_1(R)$ is the principal eigenvalue of
\beaa
-\Delta \phi= \lambda \phi\quad \mbox{in $B_R$},\quad \phi=0\quad \mbox{on $\partial B_R$}.
\eeaa
\end{cor}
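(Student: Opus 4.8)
The plan is to check that any $(u_0,v_0,s_1^0,s_2^0)$ satisfying \eqref{ic}, (i) and (ii), with $C_0$ chosen large, lies in the class of initial data for which the proof of Theorem~\ref{thm1} is carried out, so that the conclusions of Theorem~\ref{thm1} may then simply be quoted. That proof requires of the data, essentially, three things: (a) $\|u_0\|_{L^\infty}\le 1$; (b) the resulting solution has $s_{1,\infty}=\infty$; and (c) $v_0$ is bounded below by a fixed positive constant on some radial annulus $\{x_0\le r\le x_0+L\}$ that is both wide and far out, where the thresholds on $x_0$ and $L$ depend on $s_1^0$ and the fixed parameters. Now (i) contains (a) outright, (ii) is exactly (c) with constant $1$, and the remaining content of (i) — the inequality $s_1^0\ge R^*\sqrt{d/[r(1-k)]}$ — is what yields (b), as I show next; the dependence of $C_0$ on $s_1^0$ is then explained.

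To obtain (b): comparing the $v$-equation with $y'=y(1-y)$ (valid since $hu\ge 0$) gives $\limsup_{t\to\infty}\sup_{r\ge 0}v(r,t)\le 1$, while comparing the $u$-equation with $y'=ry(1-y)$ and using $\|u_0\|_{L^\infty}\le 1$ gives $u\le 1$ for all time. Recall $\lambda_1(B_R)=\lambda_1(B_1)/R^2$, so that $\lambda_1(R^*)=1$ means $R^*=\sqrt{\lambda_1(B_1)}$ and the critical radius for spreading in a single-species free boundary problem with linear growth rate $a>0$ and diffusion $d$ is $\sqrt{d\lambda_1(B_1)/a}=R^*\sqrt{d/a}$. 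Fix $t_1>0$; since $s_1(\cdot)$ is strictly increasing, $s_1(t_1)>s_1^0\ge R^*\sqrt{d/[r(1-k)]}$, so (the left side decreasing to $R^*\sqrt{d/[r(1-k)]}\le s_1^0<s_1(t_1)$ as $\eta\downarrow0$) we may pick $\eta>0$ small with $R^*\sqrt{d/[r(1-k-\eta)]}<s_1(t_1)$, and then $T_1\ge t_1$ with $v\le 1+\eta$ on $[T_1,\infty)\times[0,\infty)$. On $[T_1,\infty)$, $u$ dominates from below the solution $w$ of
\[
w_t=d\Delta w+r(1-k-\eta)\,w\Big(1-\frac{w}{1-k-\eta}\Big)\ \text{in}\ 0<r<\tilde s(t),\qquad \tilde s'(t)=-\mu_1 w_r(\tilde s(t),t),
\]
where $\tilde s(T_1)\in\big(R^*\sqrt{d/[r(1-k-\eta)]},\,s_1(T_1)\big)$ and $w(\cdot,T_1)$ is a small positive bump under $u(\cdot,T_1)$. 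Since the radius $\tilde s(T_1)$ exceeds the critical radius $R^*\sqrt{d/[r(1-k-\eta)]}$, the spreading--vanishing dichotomy of Du--Guo \cite{DG} forces $\tilde s(t)\to\infty$, whence $s_1(t)\ge\tilde s(t)\to\infty$, i.e.\ (b).

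It remains to explain why (c) must be imposed with $x_0,L\ge C_0$ for $C_0$ large and $s_1^0$-dependent; this is also where the only genuine difficulty of Theorem~\ref{thm1} enters. The annulus where $v_0\ge 1$ is used, in the proof of Theorem~\ref{thm1}, to seed — via a single-species lower solution, legitimate there because $u$ vanishes on that annulus for an initial time span and because the curvature drift $\tfrac{N-1}{r}v_r$ is negligible once $x_0$ is large — a population of $v$ that then spreads outward at asymptotic speed $s^*_{\mu_2}$ and, together with $u$'s spreading at speed $c^*_{\mu_1}$, produces the segregated profile of Theorem~\ref{thm1}. The delicate point is that $-hu$ is a strong sink for $v$ wherever the superior competitor $u$ is present, so the advancing bulk of $v$ must be kept to the right of $s_1(t)$ for \emph{all} $t$; this is possible only because \eqref{A2} gives $s^*_{\mu_2}>c^*_{\mu_1}$, but making the argument start requires $L\ge C_0$ (so the seed $v$-pulse fills out to height near $1$ before $u$ can reach it) and $x_0\ge C_0$ (so that $u$'s front — whose position is bounded only by a crude barrier $s_1(t)\le s_1^0+\Lambda(1+t)$, hence lies further out when $s_1^0$ is larger — cannot overtake $v$'s bulk before the asymptotic ordering $\limsup_{t\to\infty} s_1(t)/t\le c^*_{\mu_1}<s^*_{\mu_2}$ takes effect). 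Taking $C_0$ to be the maximum of these finitely many $s_1^0$-dependent thresholds, the hypotheses of Theorem~\ref{thm1} are met and its conclusions follow. The main obstacle, then, is this ``headstart'' mechanism — ensuring that $v$'s front stays ahead of $u$'s uniformly in $t$ and over the admissible data; everything else reduces to standard free boundary comparison arguments and the single-species theory together with Theorems~A and~B.
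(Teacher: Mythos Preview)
Your approach is the paper's approach: show that (i) and (ii) place the data in the class for which Theorem~\ref{thm1} (restated as Theorem~3) is proved, then quote that theorem. The paper, however, carries this out much more concretely. It isolates two explicit hypotheses {\bf (B1)} and {\bf (B2)}: condition {\bf (B2)} is exactly the inequality $s_1^0\ge R^*\sqrt{d/[r(1-k)]}$ in (i), while {\bf (B1)} requires $u_0(r)\le U_{\varepsilon_0}(r-z_0)$ and $v_0(r)\ge W_{\varepsilon_0}(r-z_0)$ for some $z_0>0$, where $(U_{\varepsilon_0},W_{\varepsilon_0})$ are the perturbed semi-wave and the compactly supported auxiliary profile built from it in Lemmas~\ref{lem:Q+}--\ref{lem:Q-}. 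Because $U_{\varepsilon_0}(-\infty)=1+\varepsilon_0>1$ and $W_{\varepsilon_0}\le 1$ has support of fixed length $\xi_1+z(s^{\varepsilon_0})-\xi_3$, the paper simply takes $z_0:=x_0-\xi_3$ and
\[
C_0:=\max\big\{C_1,\ \xi_1+z(s^{\varepsilon_0})-\xi_3\big\},
\]
with $C_1=C_1(s_1^0)$ chosen so that $U_{\varepsilon_0}(r-z_0)\ge 1$ on $[0,s_1^0]$ whenever $z_0\ge C_1$. Thus the $s_1^0$-dependence of $C_0$ arises solely from the need to dominate $u_0$ by the shifted semi-wave on all of $[0,s_1^0]$; your route via a linear barrier $s_1(t)\le s_1^0+\Lambda(1+t)$ and a ``headstart'' argument is not what the paper does (and is unnecessary once one has the coupled comparison pair $(\overline U,\underline V)$ of Step~1, which handles $u$ and $v$ simultaneously rather than treating $v$ by a single-species sub-solution on a region where $u$ vanishes). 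Your derivation of $s_{1,\infty}=\infty$ from (i) is correct but more elaborate than what is needed here; the paper just records that {\bf (B2)} gives this, and actually uses the condition later (Step~3) via comparison with problem {\bf (Q)}.
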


Roughly speaking, conditions (i) and (ii) above (together with \eqref{h-k} and \eqref{A2}) guarantee that $u$ does not vanish yet it cannot spread too fast initially,
and the initial population of $v$ is relatively well-established in some part of the environment where  $u$ is absent, so with its fast spreading speed $v$ can outrun the superior but slower competitor $u$.
In Section 2, weaker sufficient conditions on $u_0$ and $v_0$ will be given (see  {\bf (B1)} and  {\bf (B2)} there).

\medskip

Next we describe the long-time behavior of {\bf (P)} for the case
\begin{equation}\label{A3}
c^*_{\mu_1}>s^*_{\mu_2}.
\end{equation}
We will show that, in this case, no matter how the initial functions $u_0$ and $v_0$ are chosen, at least one of $u$ and $v$ will vanish eventually. As in \cite{GW2}, we say $u$ (respectively $v$) {\it vanishes eventually} if
\[
s_{1,\infty}<+\infty \mbox{ and } \lim_{t\to+\infty}\|u(\cdot, t)\|_{L^\infty([0, s_1(t)])}=0
\]
\[ (\mbox{respectively, }\;
s_{2,\infty}<+\infty \mbox{ and } \lim_{t\to+\infty}\|v(\cdot, t)\|_{L^\infty([0, s_1(t)])}=0);
\]
and we say $u$ (respectively $v$) {\it spreads successfully} if
\[\mbox{
$s_{1,\infty}=\infty$ and
there exists $\delta>0$ such that,}
\]
\[
\mbox{$u(x,t)\geq \delta $ for $x\in I_u(t)$ and $t>0$,}
\]
where $I_u(t)$ is an interval of length at least $\delta$ that varies continuously in $t$
(respectively,
\[\mbox{$s_{2,\infty}=\infty$ and
there exists $\delta>0$ such that}
\]
\[\mbox{
$v(x,t)\geq \delta$ for $x\in I_v(t)$ and $t>0$,}
\]
where  $I_v(t)$ is an interval of length at least $\delta$ that varies continuously in $t$).

\medskip

For fixed $d,r,h,k>0$ satisfying \eqref{h-k}, we define
\beaa
\mathcal{B}=\mathcal{B}(d,r,h,k):=\Big\{(\mu_1,\mu_2)\in \mathbb{R}_+\times\mathbb{R}_+: \ c^*_{\mu_1}>s^*_{\mu_2}\Big \}.
\eeaa
Note that $\mathcal{B}\neq\emptyset$ since $s^*_{\mu_2}\to 0$ as $\mu_2\to 0$ and $c^*_{\mu_1}>0$ is independent of $\mu_2$.

We have the following   result.
\begin{thm}\label{prop-trichotomy} Assume that \eqref{h-k} holds.  If $(\mu_1,\mu_2)\in \mathcal{B}$, then at least one of the species $u$ and $v$ vanishes eventually. More precisely, depending on the choice of $u_0$ and $v_0$, exactly one of the following occurs for the unique solution $(u,v,s_1,s_2)$ of  {\bf(P)}$:$
\begin{itemize}
\item[(i)] Both  species $u$ and $v$ vanish eventually.
\item[(ii)] The species $u$ vanishes eventually and $v$ spreads successfully.
\item[(iii)] The species $u$ spreads successfully and $v$ vanishes eventually.
\end{itemize}
\end{thm}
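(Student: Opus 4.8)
The plan is to reduce the whole statement to the single claim that, under \eqref{h-k} with $(\mu_1,\mu_2)\in\mathcal B$, the species $u$ and $v$ cannot both spread successfully. Indeed, the usual ``bounded free boundary implies extinction'' lemma (as in \cite{DL,DG}, using that the competition term only decreases each density) shows, for $i=1,2$, that $s_{i,\infty}<\infty$ forces that species to vanish eventually, while if $s_{i,\infty}=\infty$ then that species spreads successfully (for $u$ by its competitive superiority, for $v$ by the structure of its equation). Since ``vanishes eventually'' and ``spreads successfully'' are mutually exclusive and, for the unique solution, $s_{1,\infty}$ and $s_{2,\infty}$ are determined, the four a priori combinations collapse --- once ``both spread'' is excluded --- to exactly the three cases (i)--(iii), precisely one of which occurs; each is realized by suitable data (e.g. (i) for $u_0,v_0$ small; (iii) for $u_0$ large on a large ball and $v_0$ small; (ii) for $u_0$ small and $v_0$ large on a large ball).

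So suppose both spread successfully, i.e. $s_{1,\infty}=s_{2,\infty}=\infty$. From $v_t=\Delta v+v(1-v-hu)\le\Delta v+v(1-v)$ with $v\equiv0$ on $r=s_2(t)$, comparison with the single-species free boundary problem for $\tilde v_t=\Delta\tilde v+\tilde v(1-\tilde v)$ (free boundary law $-\mu_2\tilde v_r$, same data) gives $s_2(t)\le\tilde s_2(t)$, and since $s_{2,\infty}=\infty$ forces $\tilde v$ to spread, the single-species spreading result of \cite{DG} (cf. \cite{BDK,DLou}) yields $\limsup_{t\to\infty}s_2(t)/t\le s^*_{\mu_2}$. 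For a matching lower bound on the front of $u$, pick $T$ so large that $s_1(T)$ exceeds the threshold ensuring that the invading species in problem {\bf(Q)} necessarily spreads, and let $(\hat u,\hat v,\hat h)$ solve {\bf(Q)} with $h_0=s_1(T)$, $\hat u_0=u(\cdot,T)$ and $\hat v_0\equiv\|v(\cdot,T)\|_{L^\infty}+1$ (so $\hat v_0\ge v(\cdot,T)$ and $\liminf_{r\to\infty}\hat v_0>0$). A pairwise comparison for such competition free boundary systems (in the spirit of \cite{GW2,DL2,DWZ}), using $\hat v_0\ge v(\cdot,T)$, gives $\hat u(\cdot,t-T)\le u(\cdot,t)$, $\hat v(\cdot,t-T)\ge v(\cdot,t)$ on the support of $v$, and $\hat h(t-T)\le s_1(t)$ for $t\ge T$; since $\hat h_\infty=\infty$, Theorem~B gives $\hat h(\tau)/\tau\to c^*_{\mu_1}$, hence $\liminf_{t\to\infty}s_1(t)/t\ge c^*_{\mu_1}$.

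Since $c^*_{\mu_1}>s^*_{\mu_2}$, fixing $\epsilon>0$ with $c^*_{\mu_1}-\epsilon>s^*_{\mu_2}+\epsilon$ gives $s_1(t)\ge(c^*_{\mu_1}-\epsilon)t>(s^*_{\mu_2}+\epsilon)t\ge s_2(t)$, and in particular $s_1(t)-s_2(t)\to\infty$, for all large $t$. Thus the support $[0,s_2(t)]$ of $v$ lies well inside $[0,s_1(t)]$, and in the expanding shell $s_2(t)<r<s_1(t)$ the density $u$ solves the single-species logistic equation. Using also $\limsup_t\|v(\cdot,t)\|_\infty\le1$ together with $0<k<1$, comparison of $u$ on $[0,s_1(t)]$ with the logistic equation of reaction $ru(1-k(1+\epsilon)-u)$ (which spreads to fill $[0,\infty)$ since $s_1(t)\to\infty$) shows that $u$ is bounded below by a positive constant on $[0,s_1(t)-L_\epsilon]$ for all large $t$, hence on a full space--time neighbourhood of every point of $I_v(t)\subset[0,s_2(t)]$ once $s_1(t)-s_2(t)>L_\epsilon$. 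But then, taking $\rho(t_n)\in I_v(t_n)$ with $v(\rho(t_n),t_n)\ge\delta$ and passing to a limit of $(u,v)$ translated to $(\rho(t_n),t_n)$, one obtains a bounded entire solution $(\bar u,\bar v)$ of the weak--strong competition system (on $\R^N$, or on a half-space with $\bar v=0$ on the boundary if $v$'s free boundary stays at bounded distance) with $\bar u$ bounded away from $0$; a Liouville-type theorem for \eqref{h-k} (as used in \cite{DL2,DWZ}) then forces $\bar v\equiv0$, so $v(\rho(t_n),t_n)\to0$, contradicting $v(\rho(t_n),t_n)\ge\delta$. Hence $u$ and $v$ cannot both spread successfully, and the theorem follows.

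The main obstacle is the lower bound $\liminf_t s_1(t)/t\ge c^*_{\mu_1}$: this is precisely where the new quantity $c^*_{\mu_1}$, the semi-wave system \eqref{semi-wave system} and Theorems~A--B of \cite{DWZ} are indispensable, and one must take $T$ (hence $s_1(T)$) large enough and arrange the comparison between {\bf(P)} and {\bf(Q)} so that {\bf(Q)} is genuinely in its spreading regime and Theorem~B applies. A secondary delicate point is the concluding step: making rigorous --- via the local-convergence/$\omega$-limit argument above together with sharp control of $u$ near the moving interface $r=s_2(t)$ --- that $v$, once enclosed by the faster-moving front of $u$, cannot persist.
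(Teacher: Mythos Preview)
Your two speed estimates are correct and match the paper exactly: the upper bound $\limsup_{t\to\infty}s_2(t)/t\le s^*_{\mu_2}$ by comparison with the single-species problem, and the lower bound $\liminf_{t\to\infty}s_1(t)/t\ge c^*_{\mu_1}$ by comparing {\bf (P)} with {\bf (Q)} and invoking Theorem~B, are precisely what the paper does.

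There is, however, a genuine gap in your reduction step. You claim the dichotomy ``$s_{i,\infty}=\infty\Rightarrow$ species $i$ spreads successfully'', but for the inferior competitor $v$ this is not justified when $u$ also has $s_{1,\infty}=\infty$; the phrase ``by the structure of its equation'' does not supply an argument. Consequently your contradiction argument, which at the end explicitly uses a point $\rho(t_n)\in I_v(t_n)$ with $v(\rho(t_n),t_n)\ge\delta$, only excludes the scenario ``both species spread successfully''. It does \emph{not} exclude ``$s_{1,\infty}=s_{2,\infty}=\infty$, $u$ spreads successfully, but $v$ neither vanishes (since $s_{2,\infty}=\infty$) nor possesses the persistent interval $I_v(t)$''. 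Unless this case is handled, the four combinations do not collapse to (i)--(iii).

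The paper closes this by proving the stronger statement: whenever $u$ spreads successfully (equivalently $s_{1,\infty}>s^*:=R^*\sqrt{d/(r(1-k))}$) and $(\mu_1,\mu_2)\in\mathcal B$, one has $s_{2,\infty}<\infty$ directly, without any reference to $I_v(t)$. The argument is both simpler and sharper than your Liouville/compactness step, and it exploits the {\bf (Q)} comparison you already set up more fully: from that comparison together with the estimate on $\hat u$ in \cite{DWZ} one gets not merely that $u$ is bounded below by a positive constant, but that $u\to 1$ uniformly on $[0,\hat c t]$ for any $\hat c\in(s^*_{\mu_2},c^*_{\mu_1})$. Since $s_2(t)<\hat c t$ for large $t$, this gives $hu>1$ on the whole support of $v$ for large $t$, hence $v_t\le\Delta v$ there, and a standard comparison (cf.\ \cite[Theorem~3]{GW2}) forces $s_{2,\infty}<\infty$, the desired contradiction. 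Note that your logistic comparison only yields $u\gtrsim 1-k$, which need not exceed $1/h$, so the inequality $v_t\le\Delta v$ is not available from your bound; this is why you were forced into the Liouville route, which then required the unverified persistence of $I_v(t)$.

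Finally, the paper organizes the whole proof by thresholds on $s_{1,\infty}$ (following \cite[Theorem~2]{GW2}): $s_{1,\infty}\le s_*:=R^*\sqrt{d/r}$ gives $u$ vanishes and $v$ behaves as in the single-species problem; $s_*<s_{1,\infty}\le s^*$ forces $u$ vanishes and $v$ spreads; $s_{1,\infty}>s^*$ forces $u$ spreads, and then the argument above gives $v$ vanishes. This structure yields the trichotomy automatically, without first having to establish a spreading/vanishing dichotomy for $v$.
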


Note that $(\mu_1,\mu_2)\in \mathcal{B}$ if and only if \eqref{A3} holds.
Theorem~\ref{prop-trichotomy} can be proved along the lines of the proof of \cite[Corollary 1]{GW2} with some suitable changes. When $N=1$, Theorem 2 slightly improves the conclusion of Corollary 1 in \cite{GW2}, since it is easily seen that $\mathcal{A}\subset \mathcal{B}$ (due to
$s^*(r(1-k),r,d,\mu_1)\leq c^*_{\mu_1}$), where $\mathcal{A}:=\Big\{(\mu_1,\mu_2)\in \mathbb{R}_+\times\mathbb{R}_+: \ s^*(r(1-k),r,d,\mu_1)>s^*_{\mu_2}\Big \}$ is given in \cite{GW2}.

\begin{rk}{\rm
We note that by suitably choosing the initial functions $u_0$ and $v_0$ and the parameters $\mu_1$ and $\mu_2$, all the three possibilities  in Theorem~\ref{prop-trichotomy} can occur. For example, for given $u_0$ and $v_0$
with $s_1^0<R^*\sqrt{\frac{d}{r}}$ and $s_2^0< R^*$, then scenario (i) occurs
as long as both $\mu_1$ and $\mu_2$ are small enough and $(\mu_1, \mu_2)\in\mathcal{B}$ (which  can be proved by using the argument in \cite[Lemma 3.8]{DL}). If next we modify $v_0$ such that $s_2^0\geq R^*$,
then $u$ still vanishes eventually but $v$ will spread successfully, which leads to scenario (ii). For scenario (iii) to occur, we can take $s_1^0\geq R^*\sqrt{\frac{d}{r(1-k)}}$
and $\mu_2$ small enough.}
\end{rk}

Our results here suggest that in the weak-strong competition case, co-existence of the two species over a common (either moving or stationary) spatial region  can hardly happen. This contrasts sharply to the weak competition case
($h,k\in(0,1)$), where coexistence often occurs; see, for example \cite{Wu2, WND}.

Before ending this section, we mention some further references that form part of the background of this research.
Since the work \cite{DL}, there have been tremendous
efforts towards developing analytical tools to deal with more general single species models with free boundaries; see
 \cite{BDK,DG2,DGP,DLiang,DLou,DLZ,DMZ,DMZ2,Kaneko,KY,KY2,LLZ,MW,PZ,Wang1,ZX} and references therein.
Related works for two species models can be found in, for example, \cite{DL2,DWZ,GW1,Wang0,Wang2,WZ,WZ1,Wu1,Wu2,ZW}.
The issue of the spreading speed for single species models in homogeneous environment  has been well studied, and we refer to \cite{DMZ,DMZ2} for some sharp estimates.
Some of the theory on single species models can be used to estimate the spreading speed for two species models;
however, generally speaking, only rough upper and lower bounds can be obtained via this approach.

The rest of this paper is organized as follows.
In Section 2, we shall  prove our main result, Theorem \ref{thm1}, based on the comparison principle and on
the  construct of various auxiliary functions as comparison solutions to {\bf(P)}. Section 3 is an appendix, where we prove the local
{and global}
existence and uniqueness of solutions to a wide class of problems including {\bf (P)} as a special case, and we also sketch the proof of Theorem 2.

\section{Proof of Theorem \ref{thm1}}
\setcounter{equation}{0}

We start by establishing several technical lemmas.

\begin{lemma}\label{lem:q}
Let $\mu_2>0$ and $s^*_{\mu_2}$ be given in Theorem C.
Then for each $s\in(0,s^*_{\mu_2})$, there exists a unique $z=z(s)>0$
such that the solution $q_s$ of the initial value problem
\beaa\label{bdry problem}
\left\{
 \begin{array}{ll}
 q''+sq'+q(1-q)=0\quad \mbox{in $(-\infty,0)$},\\
 q(0)=0,\quad q'(0)=-s^*_{\mu_2}/{\mu_2}
  \end{array}
\right.
\eeaa
satisfies
 $q_s'(-z(s))=0$ and $q_s'(z)<0$ for $z\in (-z(s),0)$. Moreover, $q_s(-z(s))$ is continuous in $s$ and
\beaa
z(s)\nearrow\infty,\quad q_s(-z(s))\nearrow1\quad \mbox{as $s\nearrow s^*_{\mu_2}$}.
\eeaa
\end{lemma}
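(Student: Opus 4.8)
The plan is to study the ODE $q'' + sq' + q(1-q) = 0$ on $(-\infty,0)$ as an initial value problem with data $q(0) = 0$, $q'(0) = -s^*_{\mu_2}/\mu_2 =: -p_0 < 0$, where $p_0 = (q^*)'(0)$ is the derivative of the limiting semi-wave $q^*$ from Theorem C. First I would observe that by Theorem C (applied with $a=b=d=1$), the solution $q^*$ itself satisfies this same ODE and initial condition but with $s = s^*_{\mu_2}$ in place of $s$; this is the ``boundary'' profile we are perturbing away from. For $s \in (0, s^*_{\mu_2})$, standard ODE existence gives a local solution $q_s$, and since $q_s(0)=0$, $q_s'(0) < 0$, we have $q_s > 0$ and $q_s' < 0$ immediately to the left of $0$. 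Let $z(s) \in (0,+\infty]$ be the supremum of all $\zeta > 0$ such that $q_s' < 0$ on $(-\zeta, 0)$; I would then show $z(s) < \infty$ for $s < s^*_{\mu_2}$, and that $q_s'(-z(s)) = 0$ with $0 < q_s(-z(s)) < 1$. The natural tool is a phase-plane / energy argument: writing $w = q_s'$, the ``energy'' $E = \tfrac12 w^2 + F(q)$ with $F(q) = \tfrac{q^2}{2} - \tfrac{q^3}{3}$ satisfies $E' = -s\, w^2 \le 0$ along trajectories (in the variable $\xi$ increasing), so $E$ is monotone; combined with the sign structure this pins down where $q_s'$ first vanishes and forces $q_s(-z(s)) < 1$ (since $q_s$ cannot reach the rest state $1$ with $s < s^*_{\mu_2}$, as that would make $q_s$ a genuine semi-wave of speed $s$, contradicting the uniqueness of the speed $s^*_{\mu_2}$ in Theorem C).

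Next I would establish monotonicity in $s$. The key is a comparison/sliding argument: I would differentiate the solution with respect to the parameter $s$ — set $\psi = \partial q_s/\partial s$, which solves the linearized equation $\psi'' + s\psi' + (1 - 2q_s)\psi = -q_s'$ with $\psi(0) = 0$, $\psi'(0) = 0$. Since $q_s' < 0$ on $(-z(s),0)$, the forcing $-q_s'$ is positive there, and a maximum-principle argument for this linear second-order ODE on $(-z(s),0)$ (using that $q_s'(-z(s)) = 0$ at the left endpoint) should yield $\psi < 0$ on $(-z(s),0)$, i.e.\ $q_s$ is pointwise \emph{decreasing} in $s$ on its monotone-decreasing range. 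From this, together with continuous dependence of solutions on parameters, one gets that $z(s)$ is non-decreasing in $s$ and that $s \mapsto q_s(-z(s))$ is continuous and non-decreasing; strict monotonicity follows because the forcing term is strictly signed, so $\psi \not\equiv 0$. Continuity of $q_s(-z(s))$ in $s$ requires a little care at the point where $z(s)$ might jump, but since $q_s'(-z(s)) = 0$ is a transversal (non-degenerate) crossing — the trajectory is not tangent to $\{w = 0\}$ because $q_s'' = -sq_s' - q_s(1-q_s) < 0$ there as long as $q_s \in (0,1)$ — the implicit function theorem gives local smoothness of $z(s)$, hence continuity everywhere.

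The remaining, and I expect the main, obstacle is the limiting behavior as $s \nearrow s^*_{\mu_2}$: showing $z(s) \to \infty$ and $q_s(-z(s)) \to 1$. The idea is continuous dependence on parameters together with the characterization of $q^*$: as $s \to s^*_{\mu_2}$, $q_s \to q^*$ in $C^2_{loc}((-\infty,0])$, and $q^*$ is the unique solution of \eqref{semi-wave eq} with $q^*(-\infty) = 1$, $(q^*)' < 0$ on all of $(-\infty,0)$. So for any fixed $M > 0$, $q_s' < 0$ on $[-M,0]$ for $s$ close enough to $s^*_{\mu_2}$, forcing $z(s) > M$; hence $z(s) \to \infty$. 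Then $q_s(-z(s))$ is squeezed: it is bounded above by $1$ (shown in Step 1), and bounded below by $q_s(-M) \to q^*(-M)$, which can be made arbitrarily close to $1$ by taking $M$ large since $q^*(-\infty) = 1$; so $\liminf_{s \to s^*_{\mu_2}} q_s(-z(s)) \ge q^*(-M) \to 1$. Combining with the upper bound $1$ and the monotonicity in $s$ gives $q_s(-z(s)) \nearrow 1$. The delicate point here is to rule out that $z(s)$ stays finite with $q_s(-z(s))$ bounded away from $1$ along some subsequence — this is exactly where the uniqueness of the semi-wave speed from Theorem C is used, via the dichotomy that a bounded $z(s)$ cannot survive the limit without producing a second admissible speed.
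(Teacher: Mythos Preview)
The paper does not argue this directly; it simply cites Proposition~2.4 of \cite{KM}. Your direct approach is sound in outline, and Step~3 (the limits via continuous dependence and comparison with $q^*$) is correct. But Step~2 contains a sign error: from $\psi(0)=\psi'(0)=0$ and the linearized equation you get $\psi''(0)=-q_s'(0)=p_0>0$, hence $\psi(\xi)\approx\tfrac{p_0}{2}\xi^2>0$ near $0$, so $\psi>0$ rather than $\psi<0$. Even with the corrected sign, your maximum-principle step does not go through cleanly, since the zero-order coefficient $1-2q_s$ is positive wherever $q_s<\tfrac12$. Step~1 also has a gap: your energy/uniqueness reasoning does not exclude the scenario where $q_s$ overshoots $1$ and blows up (the energy $E=\tfrac12(q')^2+\tfrac{q^2}{2}-\tfrac{q^3}{3}$ is compatible with $q\to+\infty$, $q'\to-\infty$), and Theorem~C by itself does not rule out a heteroclinic at speed $s$ with the prescribed slope $-p_0$.

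Both gaps are repaired by a single phase-plane comparison. Writing $p=q'$ and $dp/dq=-s-q(1-q)/p$ for $p<0$: for fixed $s<s^*_{\mu_2}$, the genuine semi-wave at speed $s$ (the solution of \eqref{semi-wave eq} with $a=b=d=1$) has initial slope strictly below $-p_0$, by the known monotonicity of the semi-wave slope in the speed (part of the background to Theorem~C, e.g.\ \cite{BDK,DLou}); hence your trajectory starts \emph{above} it in the $(q,p)$-plane and, by ODE uniqueness, stays above. Since the semi-wave approaches $(1,0)$, your trajectory must meet $p=0$ at some $q\in(0,1)$, giving $z(s)<\infty$ and $q_s(-z(s))<1$. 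The same device yields the monotonicity: for $s_1<s_2$, both trajectories start at $(0,-p_0)$, and the formula for $dp/dq$ shows the $s_1$-trajectory lies above the $s_2$-trajectory throughout $\{p<0\}$, so it meets $p=0$ at a smaller $q$, giving $q_{s_1}(-z(s_1))<q_{s_2}(-z(s_2))$ directly.
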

\begin{proof}
The conclusions follow directly  from Proposition~2.4 in \cite{KM}.
\end{proof}

%%%%%%%%%%%%%%%%%%%%%%%%%%%%%%%%%%%%%%%%%%%%%%%%%%%%%%5
\begin{lemma}\label{lem:estimate1}
Let $(u,v,s_1,s_2)$ be a solution of {\bf(P)} with $s_{1,\infty}=s_{2,\infty}=\infty$. Suppose that
\beaa
\limsup_{t\to\infty}\frac{s_1(t)}{t}<c_1<c_2<\liminf_{t\to\infty}\frac{s_2(t)}{t}
\eeaa
for some positive constants $c_1$ and $c_2$.
Then for any $\varepsilon>0$, there exists $T>0$ such that
\bea
&&v(r,t)<1+\varepsilon\quad \mbox{for all $t\geq T$ and $r\in[0,\infty)$},\label{upper bound}\\
&&v(r,t)>1-\varepsilon\quad \mbox{for all $t\geq T$ and $r\in[c_1 t,c_2 t]$}.\label{lower bound}
\eea
\end{lemma}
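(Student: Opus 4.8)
The plan is to treat the two inequalities in turn, using the comparison principle for the single scalar reaction--diffusion equation satisfied by $v$ together with the hypotheses on the asymptotic spreading/retreating rates of $s_1(t)$ and $s_2(t)$.

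First I would establish the upper bound \eqref{upper bound}. The function $v$ satisfies $v_t=\Delta v+v(1-v-hu)\le \Delta v+v(1-v)$ on $\{0<r<s_2(t)\}$, with $v_r(0,t)=0$ and $v(s_2(t),t)=0$; extended by zero this gives a global subsolution to the logistic Neumann problem $\bar v_t=\Delta \bar v+\bar v(1-\bar v)$ on all of $\R^N$. Comparing $v$ with the solution $\bar v$ of this logistic equation with a bounded initial datum dominating $v(\cdot,T_0)$ (any constant $M\ge \|v(\cdot,T_0)\|_\infty$ works, since constants are spatially homogeneous supersolutions of the Neumann problem), and using the elementary fact that such spatially constant solutions of the logistic ODE $\bar v'=\bar v(1-\bar v)$ converge monotonically to $1$, we obtain $v(r,t)\le \bar v(t)\to 1$ uniformly in $r$, hence \eqref{upper bound} for all $t$ large.

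The substance is in the lower bound \eqref{lower bound}. Fix $\varepsilon>0$ and pick $c_1<c_1'<c_2'<c_2$. By hypothesis there is $T_1$ with $s_1(t)<c_1' t$ and $s_2(t)>c_2' t$ for $t\ge T_1$; thus on the spatial--temporal region $\Omega:=\{(r,t): t\ge T_1,\ c_1' t\le r\le c_2' t\}$ we have $u\equiv 0$, so there $v$ solves exactly $v_t=\Delta v+v(1-v)$. Since $v>0$ in the interior of its support (strong maximum principle / Hopf), $v$ is bounded below by a positive constant on the compact "seed" set $\{t=T_1,\ c_1' T_1\le r\le c_2' T_1\}$. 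The idea is to slide a traveling--wave--type (or moving radial) subsolution supported in $\Omega$ underneath $v$: because $c_2'<c_2\le \liminf s_2(t)/t$ and $c_1'>c_1>\limsup s_1(t)/t$ leave room, and because the logistic equation has spreading speed $2$ which is at least as large as the "width-expansion rate" $(c_2'-c_1')$ of the shell when the shell is wide enough (note $r$ ranges over an interval of length $(c_2'-c_1')t\to\infty$, so the $\frac{N-1}{r}v_r$ term becomes negligible and effectively we are in the one-dimensional logistic regime), the standard construction of a compactly supported expanding subsolution for $v_t=v_{rr}+v(1-v)$ (as in the classical work on the Fisher--KPP and free-boundary problems, e.g. via the ground-state/large-ball principal eigenvalue argument of Du--Lin) forces $v(r,t)\ge 1-\varepsilon$ on $[c_1 t, c_2 t]$ for $t$ large. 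Concretely: for any $\sigma\in(0,1)$ one can build a subsolution of the form $v(r,t)\ge \underline v(r,t)$ where $\underline v$ is obtained by solving the logistic equation on the moving shell $[c_1' t, c_2' t]$ with zero boundary data and positive initial data, and then invoking the known long-time behavior of the logistic equation on expanding domains (principal eigenvalue tends to that of $-\Delta$ on $\R$, i.e. to $0<1$) to conclude $\underline v \to 1$ locally uniformly in the moving frame; after adjusting the constants $c_1',c_2'$ back to $c_1,c_2$ this yields \eqref{lower bound}.

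The main obstacle I anticipate is the lower bound, specifically arranging a clean comparison on the \emph{moving} shell $[c_1 t, c_2 t]$ where neither endpoint is a true boundary of $v$'s domain: one must verify that the constructed subsolution stays inside the region where $u\equiv 0$ (so that the $-huv$ term can legitimately be dropped) for all large $t$, and that the spatial-shell width grows fast enough for the logistic dynamics to saturate to $1$ while slow enough to remain trapped between $s_1(t)$ and $s_2(t)$; this is exactly where the gap hypothesis $\limsup s_1/t<c_1<c_2<\liminf s_2/t$ is used, and where one should cite the relevant scalar free-boundary/expanding-domain estimates (Lemma~\ref{lem:q} and the single-species results of \cite{DG,DL}) rather than redo them. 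The curvature term $\frac{N-1}{r}v_r$ is a minor technical nuisance handled by noting $r\ge c_1 t\to\infty$ on the shell, so its contribution is $O(1/t)$ and can be absorbed into the $\varepsilon$.
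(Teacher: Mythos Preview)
Your treatment of the upper bound \eqref{upper bound} is correct and matches the paper's: comparison with the spatially constant ODE solution of $w'=w(1-w)$.

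For the lower bound \eqref{lower bound}, your overall plan (pass to the region $r\in[c_1't,c_2't]$ where $u\equiv0$ and exploit the pure logistic dynamics for $v$) is the same as the paper's, but there is a real gap in your argument. You invoke ``the known long-time behavior of the logistic equation on expanding domains (principal eigenvalue tends to that of $-\Delta$ on $\R$, i.e.\ to $0<1$)''. This is not the right computation: the shell $[c_1't,c_2't]$ is not merely expanding, it is \emph{translating}. In any moving frame $R=r-ct$ with $c\in[c_1,c_2]$ the equation picks up a drift term $cv_R$, and the principal Dirichlet eigenvalue of $-\partial_R^2-c\partial_R$ on $(-L,L)$ is $c^2/4+\pi^2/(2L)^2$, which tends to $c^2/4$, not $0$, as $L\to\infty$. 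So a positive subsolution exists only if $c^2/4<1$, i.e.\ $c<2$. Your ``width-expansion rate $c_2'-c_1'$'' is the wrong quantity; the relevant constraint is $c_2<2$. This does hold, but it is not free: the paper obtains it by first proving $\limsup_{t\to\infty}s_2(t)/t\le s^*_{\mu_2}<2$ via comparison with the single-species free boundary problem \cite{DG}. You need this step and have not included it. A related issue is uniformity: your sketch yields $\underline v\to1$ ``locally uniformly in the moving frame'', but the lemma requires uniformity over the whole shell $[c_1t,c_2t]$, i.e.\ over all frame speeds in $[c_1,c_2]$ simultaneously.

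The paper handles both points at once by arguing by contradiction: take a bad sequence $(x_k,t_k)$ with $x_k/t_k\to p_0\in[c_1,c_2]\subset(0,2)$, pass to the moving frame at speed $p_k$, build (using $p_0<2$) an explicit $\tilde\phi$-based stationary subsolution on a fixed large interval $[-L_2,L_1]$, and then use parabolic compactness to show the solution along the sequence converges to the unique positive steady state $z(R)$ of $z''+p_0z'+z(1-z)=0$ on $(-L_2,L_1)$, with $z(0)>1-\epsilon_0$. The contradiction argument automatically delivers the uniformity, and the explicit eigenfunction construction makes the role of $p_0<2$ transparent.
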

\begin{proof}
Let $\bar{w}$ be the solution of $w'(t)=w(1-w)$ with initial data $w(0)=\|v_0\|_{L^{\infty}}.$
By the standard comparison principle, $v(x,t)\leq \bar{w}(t)$ for all $t\geq0$.
Since $\bar{w}\to 1$ as $t\to\infty$, there exists $T>0$ such that \eqref{upper bound} holds.

Before proving \eqref{lower bound},
we first show $\limsup_{t\to\infty}s_2(t)/t\leq s^*_{\mu_2}$ by simple comparison.
Indeed, it is easy to check that $(v,s_2)$ forms a subsolution of
\beaa
\left\{
 \begin{array}{ll}
 \bar{w}_t=\Delta\bar{w}+\bar{w}(1-\bar{w}),\ 0<r<\bar{\eta}(t),\ t>0,\\
 \bar{w}_r(0,t)=0,\ \bar{w}(\bar{\eta}(t),t)=0,\ t>0,\\
 \bar{\eta}'(t)=-\mu_2 \bar{w}_r(\bar{\eta}(t),t),\ t>0,\\
 \bar{\eta}(0)=s_2^0,\ \bar{w}(r,0)=v_0(r),\ r\in[0,s_2^0],
 \end{array}
\right.
\eeaa
By the comparison principle (Lemma 2.6 of \cite{DG}), $\bar{\eta}(t)\geq s_{2}(t)$ for all $t$, which implies that $\bar{\eta}(\infty)=\infty$.
It then follows from Corollary~3.7 of \cite{DG} that $\bar{\eta}(t)/t\to s^*_{\mu_2}$ as $t\to\infty$. Consequently, we have
\[
 \limsup_{t\to\infty}\frac{s_2(t)}{t}\leq \lim_{t\to\infty}\frac{\bar{\eta}(t)}{t}=s^{*}_{\mu_2}.
\]
It follows that $c_2<s^{*}_{\mu_2}$.

We now prove \eqref{lower bound} by using a contradiction argument.
Assume that the conclusion does not hold. Then there exist small $\epsilon_0>0$,  $t_k \uparrow \infty$ and $x_k \in [c_1 t_k, c_2 t_k]$ such that
\bea\label{assumption}
v(x_k, t_k)\leq 1-\epsilon_0  \quad \mbox{for all $k\in\mathbb{N}$}.
\eea
Up to passing to a subsequence we may assume that $p_k:=x_k/t_k \to p_0$ for some $p_0 \in [c_1,c_2]$
as $k\to\infty$.

We want to show that
\bea\label{goal-lem22}
\limsup_{k\to\infty} v(x_k, t_k)>1-\epsilon_0,
\eea
which would give the desired contradiction (with \eqref{assumption}).
To do so, we define
$$w_k(R,t)= v(R + p_k t, t).$$
Then $w_k$ satisfies
\beaa
w_t = w_{RR} + \Big[\frac{N-1}{R+p_k t} + p_k\Big] w_R + w ( 1-w)\quad {\rm for}\ -p_k t<R<s_2(t)-p_k t,\ t\geq t_1.
\eeaa

Recall that $0<c_1<c_2<s_{\mu_2}^*<2$, $x_k=p_kt_k$ and $p_k\to p_0\in [c_1, c_2]\subset (0,2)$. Hence there exists large positive $L$ such that for
all $L_1$, $L_2\in [L, \infty)$,
the problem
\begin{equation}\label{z}
z_{RR}+p_0z_R+z(1-z)=0 \mbox{ in } (-L_2, L_1),\; z(-L_2)=z(L_1)=0
\end{equation}
has a unique positive solution $z(R)$ and $z(0)>1-\epsilon_0$.

Fix $L_1\geq L$, $p\in (p_0, 2)$ and define
\[
\phi(R)=e^{-\frac p2 R}\cos\frac{\pi R}{2L_1}.
\]
It is easily checked that
\[
-\phi''-p\phi'=\left[\frac{p^2}{4}+\frac{\pi^2}{4L_1^2}\right]\phi\quad \mbox{ for } R\in [-L_1, L_1],\; \phi(\pm L_1)=0.
\]
Moreover, there exists a unique  $R_0\in (-L_1, 0)$ such that
\[
\phi'(R)<0 \mbox{ for } R\in (R_0, L_1],\; \phi'(R_0)=0.
\]
We may assume that $L_1$ is large enough such that
\[
\tilde p:=\frac{p^2}{4}+\frac{\pi^2}{4L_1^2}<1.
\]
We then choose $L_2>L$ such that
\[
\mbox{$\tilde L:=L_2+R_0>0$ and}
\;
\frac{\pi^2}{4\tilde L^2}<\tilde p.
\]
Set
\[
\tilde\phi(R):=\left\{\begin{array}{ll}
\phi(R), & R\in [R_0, L_1],\vspace{0.2cm}\\
\phi(R_0)\cos\frac{\pi(R-R_0)}{2\tilde L},& R\in [-L_2, R_0).
\end{array}
\right.
\]
Then clearly
\[
-\tilde\phi''=\frac{\pi^2}{4\tilde L^2}\tilde \phi,\quad \tilde\phi'>0\ \mbox{ in } (-L_2, R_0),\; \tilde \phi(-L_2)=0=\tilde\phi'(R_0).
\]
Since
\[
\frac{N-1}{R+p_kt}+p_k<p
\]
for all large $k$ and large $t$,
we further obtain, for such $k$ and $t$, say $k\geq k_0$ and $t\geq  T_1$,
\begin{equation}
\label{tilde-phi}
-\tilde\phi''-\left[\frac{N-1}{R+p_kt}+p_k\right]\tilde\phi'\leq -\tilde\phi''-p\chi_{[R_0,L_1]}\tilde\phi'\leq \tilde p\,\tilde \phi
\mbox{ for } R\in (-L_2, L_1).
\end{equation}
The above differential inequality should be understood in the weak sense since $\tilde\phi''$ may have a jump at $R=R_0$.

We now fix $T_0\geq T_1$ and observe that
\[
v(R, T_0)>0 \mbox{ for } R\in [0, s_2(T_0)],\; c_2<\liminf_{t\to\infty}\frac{s_2(t)}{t}.
\]
Hence if $T_0$ is large enough then for $R\in [-L_2, L_1]$ and $t\geq T_0$ we have
\[
0<-L_2+c_1t\leq R+p_kt\leq L_1+c_2t<s_2(t) \mbox{ for all } k\geq 1.
\]
It follows that
\[
w_k(R, T_0)=v(R+p_kT_0,T_0)\geq \sigma_0:=\min_{R\in[0, L_1+c_2T_0]}v(R,T_0)>0 \mbox{ for } R\in [-L_2, L_1],\; k\geq 1.
\]

Let $z_k(R,t)$ be the unique solution of
\beaa
z_t =  z_{RR} + \Big[\frac{N-1}{R+p_k t} + p_k\Big]z_R + z(1-z),\quad z(L_1, t) = z(-L_2,t) =0.
\eeaa
with initial condition
\[
z_k(R, T_0)= w_k(R, T_0),\,\; R\in [-L_2, L_1].
\]
The comparison principle yields
\[
w_k(R,t)\geq z_k(R, t) \mbox{ for } R\in [-L_2, L_1],\; t\geq T_0,\; k\geq 1
\]
since
$w_k(R, t)>0=z_k(R,t)$ for $R\in\{-L_2, L_1\}$ and $t> T_0$,\; $k\geq 1$.

On the other hand, if we choose $\delta>0$ sufficiently small, then
$\underline z(R):=\delta\tilde\phi(R)\leq \sigma_0$ for $R\in [-L_2, L_1]$ and due to \eqref{tilde-phi},
$\underline z(R)$ satisfies
\[
-\underline z''-\left[\frac{N-1}{R+p_kt}+p_k\right]\underline z'\leq \underline z(1-\underline z) \mbox{ for } R\in (-L_2, L_1),\; t\geq T_0,\; k\geq k_0.
\]
We thus obtain
\[
z_k(R,t)\geq \underline z(R) \mbox{ for } R\in [-L_2, L_1],\; t\geq T_0,\; k\geq k_0.
\]

We claim that
\begin{equation}\label{z_k}
\lim_{k\to\infty} z_k(0, t_k)=z(0)>1-\epsilon_0,
\end{equation}
where $z(R)$ is the unique positive solution of \eqref{z}, which then gives
\[
\limsup_{k\to\infty}v(x_k,t_k)=\limsup_{k\to\infty}w_k(0,t_k)\geq\limsup_{k\to\infty}z_k(0,t_k)>1-\epsilon_0,
\]
and so \eqref{goal-lem22} holds.

It remains to prove \eqref{z_k}. Set
\[
Z_k(R,t):=z_k(R, t_k+t).
\]
Then $Z_k$ satisfies
\[
(Z_k)_t=(Z_k)_{RR}+\left[ \frac{N-1}{R+p_k(t_k+t)}+p_k\right](Z_k)_R+Z_k(1-Z_k) \mbox{ for } R\in (-L_2, L_1),\;  t\geq T_0-t_k,
\]
and
\[\mbox{$Z_k(-L_2,t)=Z_k(L_1,t)=0, \; Z_k(R,t)\geq \underline z(R)$ for $R\in [-L_2,L_1]$, $t\geq T_0-t_k$,}\ k\geq k_0.
\]

By a simple comparison argument involving a suitable ODE problem we easily obtain
\[
Z_k(R,t)\leq M:=\max\{ \|v(\cdot, T_0)\|_{L^\infty}, 1\} \mbox{ for $R\in [-L_2,L_1]$, $t\geq T_0-t_k$},\ k\geq 1.
\]
Since $ \frac{N-1}{R+p_k(t_k+t)}+p_k\to p_0$ uniformly as $k\to\infty$, we may apply the parabolic $L^p$ estimate to the equations satisfied by $Z_k$
to  conclude that, for any $p>1$
and $T>0$, there exists $C_1>0$ such that, for all large $k\geq k_0$, say $k\geq k_1$,
\[
\|Z_k\|_{W^{2,1}_p([-L_2, L_1]\times [-T,T])}\leq C_1.
\]
It then follows from the Sobolev embedding theorem that, for every $\alpha\in (0,1)$ and all  $k\geq k_1$,
\[
\|Z_k\|_{C^{1+\alpha, (1+\alpha)/2}([-L_2, L_1]\times [-T,T])}\leq C_2
\]
for some constant $C_2$ depending on $C_1$ and $\alpha$. Let $\tilde\alpha\in (0, \alpha)$. Then by compact embedding and a well known diagonal process, we can find a subsequence of $\{Z_k\}$, still denoted by itself for the seek of convenience, such that
\[\mbox{
$Z_k(R,t)\to Z(R,t)$ as $k\to\infty$ in $C_{loc}^{1+\tilde\alpha, (1+\tilde\alpha)/2}([-L_2,L_1]\times \mathbb R)$.}
\]
From the equations satisfied by $Z_k$ we obtain
\[
Z_t=Z_{RR}+p_0Z_R+Z(1-Z) \mbox{ for } R\in (-L_2, L_1),\; t\in \mathbb R,
\]
and
\[
Z(-L_2, t)=Z(L_1,t)=0,\; M\geq Z(R, t)\geq \underline z(R) \mbox{ for } R\in [-L_2, L_1],\; t\in\mathbb R.
\]

We show that $Z(R,t)\equiv z(R)$. Indeed, if we denote by $\underline Z$ the unique solution of
\[
z_t=z_{RR}+p_0z_R+z(1-z) \mbox{ for } R\in (-L_2, L_1), \; t>0
\]
with boundary conditions $z(-L_2,t)=z(L_1,t)=0$ and initial condition $z(R,0)=\underline z(R)$, while let $\overline Z$ be the unique solution to this problem but with initial condition replaced by $z(R,0)=M$, then clearly
\[
\lim_{t\to\infty} \underline Z(R,t)=\lim_{t\to\infty}\overline Z(R,t)=z(R).
\]
On the other hand, for any $s>0$, by the comparison principle we have
\[
\underline Z(R, t+s)\leq Z(R,t)\leq \overline Z(R, t+s) \mbox{ for }  R\in [-L_2, L_1],\; t\geq -s.
\]
Letting $s\to\infty$ we obtain $z(R)\leq Z(R,t)\leq z(R)$.  We have thus proved $Z(R,t)\equiv z(R)$ and hence
\[
z_k(0, t_k)=Z_k(0,0)\to Z(0,0)=z(0) \mbox{ as } k\to\infty.
\]
This proves \eqref{z_k} and the proof of Lemma 2.2 is complete.
\end{proof}

We now start to construct some auxiliary functions by modifying the unique solution $(U,V)$ of
\eqref{semi-wave system} with $c=c_{\mu_1}^*$.
 Firstly, for any given small $\varepsilon\in(0,1)$ we consider the following perturbed problem
\bea\label{perturbed system}
\begin{cases}
{cU'+d U''+rU(1+\varepsilon-U-kV)=0} \mbox{ for } -\infty<\xi<0,\\
{cV'+V''+V(1-\varepsilon-V-hU)=0} \mbox{ for} -\infty<\xi<\infty,\\
U(-\infty)=1+\varepsilon,\; U(0)=0,\; U_{\varepsilon}'(0)=-\mu_1/c,\; U'(\xi)<0=U(-\xi) \mbox{ for } \xi<0\\
V(-\infty)=0,\quad V(+\infty)=1-\varepsilon,\quad V'(\xi)>0 \mbox{ for } \xi\in\R.
\end{cases}
\eea
%By Remark 2.10 of \cite{DWZ} and
{Taking $U=(1+\varepsilon)\widehat{U}$ and $V=(1-\varepsilon)\widehat{V}$, then
$(\widehat{U}, \widehat{V})$ satisfies \eqref{semi-wave system} for $k$ and $h$ replaced by some $\hat{k}_\varepsilon$ and $\hat{h}_\varepsilon$
with $0<\hat{k}_\varepsilon<1<\hat{h}_\varepsilon$, and $\hat{k}_\varepsilon\to k$, $\hat{h}_\varepsilon\to h$
as $\varepsilon\to 0$. Hence by}
Theorem~A, there exists a unique $c=c^{\varepsilon}_{\mu_1}>0$ such that
\eqref{perturbed system} with $c=c^{\varepsilon}_{\mu_1}$ admits a unique solution
$(U_{\varepsilon},V_{\varepsilon})$. As in \cite{DWZ}, $(U_\varepsilon, V_\varepsilon)$ and $c^\varepsilon_{\mu_1}$ depends continuously on $\varepsilon$, and in particular, $c^{\varepsilon}_{\mu_1}\to c^{*}_{\mu_1}$ as $\varepsilon\to 0$. Moreover,
as in the proof of Lemma 2.5 in \cite{DWZ}, we have the asymptotic expansion
\bea\label{AS-V}
V_{\varepsilon}(\xi)=Ce^{\mu\xi}(1+o(1)), \quad V_{\varepsilon}'(\xi)=C\mu e^{\mu\xi}(1+o(1))\ \mbox{as $\xi\to-\infty$}
\eea
for some $C>0$, where
\beaa
\mu=\mu(\varepsilon):=\frac{-c^{\varepsilon}_{\mu_1}+\sqrt{(c^{\varepsilon}_{\mu_1})^2+4[h(1+\varepsilon)-1+\varepsilon]}}{2}>0.
\eeaa

Next we modify $(U_{\varepsilon}(\xi),V_{\varepsilon}(\xi))$ to obtain the required auxiliary functions. The modification of $V_{\varepsilon}$ is rather involved, and for simplicity,  we do that for  $\xi\geq0$ and $\xi\leq0$ separately.

We first consider the case $\xi\geq 0$.
For fixed $\varepsilon\in(0,1)$ sufficiently small, we define
\bea\label{Q plus}
Q^{\varepsilon}_+(\xi)=\begin{cases}
V_{\varepsilon}(\xi) &\mbox{ for } 0\leq\xi\leq\xi_0,\\
V_{\varepsilon}(\xi)-\delta(\xi-\xi_0)^2V_{\varepsilon}(\xi_0)& \mbox{ for } \xi_0\leq\xi\leq\xi_0+1,
\end{cases}
\eea
where $\xi_0=\xi_0(\varepsilon)>0$ is determined later and
\bea\label{def-delta}
\delta=\delta(\varepsilon):=\frac{\varepsilon}{4+2c^{\varepsilon}_{\mu_1}}\in(0,1).
\eea
It is straightforward to see that $Q^{\varepsilon}_+\in C^1([0,\xi_0+1])$.
The following result will be useful later.
\begin{lemma}\label{lem:Q+}
For any small $\varepsilon>0$, there exist $\xi_0=\xi_0(\varepsilon)>0$ and
$\xi_1=\xi_1(\varepsilon)\in(\xi_0,\xi_0+1)$ such that $\lim_{\varepsilon\to 0}\xi_0(\varepsilon)=\infty$ and
\[\mbox{
$(Q_+^{\varepsilon})'(\xi_1)=0$,
$(Q_+^{\varepsilon})'(\xi)>0$ for $\xi\in[0,\xi_1),$}
\]
\bea\label{Q+ineq}
c^{\varepsilon}_{\mu_1}(Q^{\varepsilon}_{+})'+(Q^{\varepsilon}_{+})''+Q^{\varepsilon}_+(1-Q^{\varepsilon}_{+})\geq0\quad
\mbox{for $\xi\in[0,\xi_1]\backslash\{\xi_0\}$.}
\eea
Moreover, there exists $s^\varepsilon\in(0,s^*_{\mu_2})$ such that
$s^\varepsilon\to s^*_{\mu_2}$ as $\varepsilon\to0^+$ and
\bea\label{connect+}
Q^{\varepsilon}_+(\xi_1)=q_{s^\varepsilon}(-z(s^\varepsilon)),
\eea
where $z(s^\varepsilon)$ and $q_{s^\varepsilon}$ are defined in Lemma~\ref{lem:q} with $s=s^\varepsilon$.
\end{lemma}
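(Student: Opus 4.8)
The plan is to construct $\xi_0(\varepsilon)$ large enough so that on $[0,\xi_0]$ the function $Q_+^\varepsilon=V_\varepsilon$ already satisfies the desired differential inequality with room to spare, and then verify that the quadratic correction on $[\xi_0,\xi_0+1]$ does not spoil it. First I would record what $V_\varepsilon$ satisfies: from the second equation in \eqref{perturbed system}, $c^\varepsilon_{\mu_1}V_\varepsilon'+V_\varepsilon''+V_\varepsilon(1-\varepsilon-V_\varepsilon-hU_\varepsilon)=0$, hence
\[
c^\varepsilon_{\mu_1}V_\varepsilon'+V_\varepsilon''+V_\varepsilon(1-V_\varepsilon)=V_\varepsilon(\varepsilon+hU_\varepsilon)\ge 0
\]
everywhere, so \eqref{Q+ineq} holds trivially on $[0,\xi_0]$ (indeed with strict inequality once $\varepsilon>0$). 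Since $U(0)=0$ and $U_\varepsilon>0$ only on $(-\infty,0)$, the ``extension by $0$'' convention makes $U_\varepsilon\equiv 0$ on $[0,\infty)$, so actually $c^\varepsilon_{\mu_1}V_\varepsilon'+V_\varepsilon''+V_\varepsilon(1-V_\varepsilon)=\varepsilon V_\varepsilon$ on $[0,\infty)$; this clean identity is what drives the estimate. Because $V_\varepsilon$ is strictly increasing with $V_\varepsilon(\infty)=1-\varepsilon$, we may pick $\xi_0(\varepsilon)\to\infty$ as $\varepsilon\to 0$; the precise rate will be chosen at the end to satisfy \eqref{connect+}.

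Next I would analyze the branch $\xi\in[\xi_0,\xi_0+1]$, where $Q_+^\varepsilon(\xi)=V_\varepsilon(\xi)-\delta(\xi-\xi_0)^2 V_\varepsilon(\xi_0)$ with $\delta=\varepsilon/(4+2c^\varepsilon_{\mu_1})$. Writing $\psi(\xi):=\delta(\xi-\xi_0)^2 V_\varepsilon(\xi_0)$, one computes
\[
c^\varepsilon_{\mu_1}(Q_+^\varepsilon)'+(Q_+^\varepsilon)''+Q_+^\varepsilon(1-Q_+^\varepsilon)
=\varepsilon V_\varepsilon-\big[c^\varepsilon_{\mu_1}\psi'+\psi''\big]-\psi(1-V_\varepsilon)-\psi(V_\varepsilon-Q_+^\varepsilon)+\text{l.o.t.}
\]
and the key point is the size bound: for $\xi\in[\xi_0,\xi_0+1]$ we have $0\le\xi-\xi_0\le 1$, so $\psi\le\delta V_\varepsilon(\xi_0)$, $\psi'=2\delta(\xi-\xi_0)V_\varepsilon(\xi_0)\le 2\delta V_\varepsilon(\xi_0)$ and $\psi''=2\delta V_\varepsilon(\xi_0)$. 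Hence $c^\varepsilon_{\mu_1}\psi'+\psi''\le (2c^\varepsilon_{\mu_1}+2)\delta V_\varepsilon(\xi_0)$, and the definition of $\delta$ was rigged precisely so that $(2c^\varepsilon_{\mu_1}+2)\delta=\varepsilon/2$, giving $c^\varepsilon_{\mu_1}\psi'+\psi''\le\tfrac{\varepsilon}{2}V_\varepsilon(\xi_0)\le\tfrac{\varepsilon}{2}V_\varepsilon(\xi)$ by monotonicity. The remaining terms $-\psi(1-V_\varepsilon)$, $-\psi(V_\varepsilon-Q_+^\varepsilon)$ and whatever is collected in ``l.o.t.'' are all $O(\delta)=O(\varepsilon)$ times bounded quantities and are non-negative or controlled; choosing $\xi_0$ large makes $V_\varepsilon(\xi_0)$ close to $1-\varepsilon$, bounded away from $0$, so $\varepsilon V_\varepsilon-\tfrac{\varepsilon}{2}V_\varepsilon=\tfrac{\varepsilon}{2}V_\varepsilon>0$ dominates. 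This yields \eqref{Q+ineq} on $[\xi_0,\xi_0+1]\setminus\{\xi_0\}$. (The excision of $\xi_0$ is only because $(Q_+^\varepsilon)''$ jumps there; the $C^1$ matching across $\xi_0$ follows since $\psi(\xi_0)=\psi'(\xi_0)=0$.)

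For the monotonicity and turning-point claims, note $(Q_+^\varepsilon)'(\xi_0^+)=V_\varepsilon'(\xi_0)>0$ while $(Q_+^\varepsilon)'(\xi_0+1)=V_\varepsilon'(\xi_0+1)-2\delta V_\varepsilon(\xi_0)$; using the asymptotics \eqref{AS-V}, $V_\varepsilon'$ decays like $C\mu e^{\mu\xi}$? — no, $\xi\to-\infty$ there — so instead I would use that $V_\varepsilon'(\xi)\to 0$ as $\xi\to+\infty$ (from $V_\varepsilon(\infty)=1-\varepsilon$ and standard semi-wave tail estimates), hence for $\xi_0$ large $V_\varepsilon'(\xi_0+1)<2\delta V_\varepsilon(\xi_0)$, making $(Q_+^\varepsilon)'(\xi_0+1)<0$. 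By the intermediate value theorem there is $\xi_1\in(\xi_0,\xi_0+1)$ with $(Q_+^\varepsilon)'(\xi_1)=0$; since $(Q_+^\varepsilon)'=V_\varepsilon'-2\delta(\xi-\xi_0)V_\varepsilon(\xi_0)$ is strictly decreasing in $\xi$ on $[\xi_0,\xi_0+1]$ (as $V_\varepsilon'$ is eventually decreasing and the subtracted term is increasing), $\xi_1$ is unique and $(Q_+^\varepsilon)'>0$ on $[0,\xi_1)$. Finally, for \eqref{connect+}: by Lemma~\ref{lem:q}, $s\mapsto q_s(-z(s))$ is continuous on $(0,s^*_{\mu_2})$ with range $(q_{0^+}(-z(0^+)),1)$ increasing to $1$ as $s\nearrow s^*_{\mu_2}$; meanwhile $Q_+^\varepsilon(\xi_1)=V_\varepsilon(\xi_1)-\delta(\xi_1-\xi_0)^2V_\varepsilon(\xi_0)\to 1$ as $\varepsilon\to 0$ (since $V_\varepsilon(\xi_1)\to 1$ for $\xi_0\to\infty$ and the correction is $O(\varepsilon)$), and it is $<1$ for each fixed small $\varepsilon$. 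So for each small $\varepsilon$ the value $Q_+^\varepsilon(\xi_1)$ lies in the range of $s\mapsto q_s(-z(s))$, and by the intermediate value theorem there is $s^\varepsilon\in(0,s^*_{\mu_2})$ achieving it; pushing $Q_+^\varepsilon(\xi_1)\to 1$ forces $s^\varepsilon\to s^*_{\mu_2}$. The main obstacle I anticipate is bookkeeping the ``lower order'' terms in the differential-inequality computation on $[\xi_0,\xi_0+1]$ with completely explicit constants so that the $\delta=\varepsilon/(4+2c^\varepsilon_{\mu_1})$ choice really absorbs everything — in particular tracking that $V_\varepsilon-Q_+^\varepsilon=\psi\ge 0$ so the troublesome quadratic term $-Q_+^\varepsilon\psi$ actually has a favorable sign, and that the exponential-type tail bounds on $V_\varepsilon'$ and $1-V_\varepsilon-$ type quantities (which are uniform in $\varepsilon$ small by the continuous dependence from \cite{DWZ}) are strong enough to make $\xi_0(\varepsilon)$ choosable with $\xi_0(\varepsilon)\to\infty$.
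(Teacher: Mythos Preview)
Your approach is essentially the same as the paper's, and the structure --- verify the differential inequality on $[0,\xi_0]$ via the identity $c^\varepsilon_{\mu_1}V_\varepsilon'+V_\varepsilon''+V_\varepsilon(1-V_\varepsilon)=\varepsilon V_\varepsilon$, then bound the correction terms on $(\xi_0,\xi_1]$, then use Lemma~\ref{lem:q} to match the value $Q_+^\varepsilon(\xi_1)$ --- is exactly what the paper does.

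Two minor points to tighten. First, the arithmetic around $\delta$: you write that $(2c^\varepsilon_{\mu_1}+2)\delta=\varepsilon/2$, but in fact the definition $\delta=\varepsilon/(4+2c^\varepsilon_{\mu_1})$ gives the exact identity $(2c^\varepsilon_{\mu_1}+4)\delta=\varepsilon$. When you expand $Q(1-Q)$ carefully (the paper does this explicitly), the ``l.o.t.'' you leave vague contributes two further negative pieces each bounded below by $-\delta V_\varepsilon(\xi_0)$, so the total negative contribution is $(2c^\varepsilon_{\mu_1}+4)\delta\,V_\varepsilon(\xi_0)=\varepsilon V_\varepsilon(\xi_0)$, which exactly cancels $\varepsilon V_\varepsilon(\xi)\ge\varepsilon V_\varepsilon(\xi_0)$; the inequality is $\ge 0$, not strictly positive. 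Your margin of $\tfrac{\varepsilon}{2}V_\varepsilon$ is illusory once the bookkeeping is done honestly. Second, you do not need (and the paper does not claim) uniqueness of $\xi_1$: the monotonicity of $V_\varepsilon'$ near $+\infty$ is not established, so just take $\xi_1$ to be the \emph{first} zero of $(Q_+^\varepsilon)'$ in $(\xi_0,\xi_0+1)$, which automatically gives $(Q_+^\varepsilon)'>0$ on $[\xi_0,\xi_1)$. With these two fixes your argument is complete and matches the paper's.
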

\begin{proof} For convenience of notation we will write $Q^{\varepsilon}_+=Q$.
Since $V_{\varepsilon}(\infty)= 1-\varepsilon$, $V_{\varepsilon}'(\infty)=0$ and $V_\varepsilon'>0$,
we can choose $\xi_0=\xi_0(\varepsilon)\gg1$ such that $\lim_{\varepsilon\to 0}\xi_0(\varepsilon)=\infty$ and
\beaa
&&1-2\varepsilon\leq V_{\varepsilon}(\xi)\leq 1-\varepsilon,\quad V_{\varepsilon}'(\xi+1)-2\delta V_{\varepsilon}(\xi)<0
\quad\mbox{for all $\xi\geq \xi_0$}.
\eeaa
In particular, we have
\beaa
Q'(\xi_0+1)=V_{\varepsilon}'(\xi_0+1)-2\delta V_{\varepsilon}(\xi_0)<0.
\eeaa
Note that $Q'(\xi_0)=V_{\varepsilon}'(\xi_0)>0$. By the continuity of $Q'$,
we can find $\xi_1=\xi_1(\xi_0)\in(\xi_0,\xi_0+1)$ such that
\bea\label{xi 1}
Q'(\xi_1)=0,\quad Q'(\xi)>0\ \mbox{for all $\xi\in[\xi_0,\xi_1)$}.
\eea
Hence we have $Q'>0$ in $[0,\xi_1)$ since
$Q'=V_{\varepsilon}'>0$ in $[0,\xi_0]$.

We now prove  \eqref{Q+ineq}.
For $\xi\in[0,\xi_0)$, we have $Q=V_{\varepsilon}$. Using $U_{\varepsilon}(\xi)\equiv0$ for $\xi\geq0$
and the second equation of \eqref{perturbed system}, it is straightforward to see that the inequality in \eqref{Q+ineq} holds
 for $\xi\in[0,\xi_0)$.
For $\xi\in(\xi_0,\xi_1]$, direct computation gives us
\beaa
Q'(\xi)=V'_{\varepsilon}(\xi)-2 \delta (\xi-\xi_0)V_{\varepsilon}(\xi_0),\quad Q''(\xi)=V''_{\varepsilon}(\xi)-2 \delta V_{\varepsilon}(\xi_0).
\eeaa
Hence
\beaa
&&c^{\varepsilon}_{\mu_1}Q'+Q''+Q(1-Q)\\
&=& c^{\varepsilon}_{\mu_1}V'_{\varepsilon}(\xi)-2 c^{\varepsilon}_{\mu_1} \delta (\xi-\xi_0)V_{\varepsilon}(\xi_0)
+V''_{\varepsilon}(\xi)-2\delta V_{\varepsilon}(\xi_0)\\
&&+\big[V_{\varepsilon}(\xi)-\delta(\xi-\xi_0)^2V_{\varepsilon}(\xi_0)\big]\big[1-V_{\varepsilon}(\xi)+\delta(\xi-\xi_0)^2V_{\varepsilon}(\xi_0)\big]
\eeaa
Using $0\leq \xi-\xi_0\leq \xi_1-\xi_0<1$, $1>V_{\varepsilon}(\xi)>V_{\varepsilon}(\xi_0)$  for $\xi\in[\xi_0,\xi_1]$, the identity
\beaa
c^{\varepsilon}_{\mu_1}V'_{\varepsilon}+V''_{\varepsilon}=-(1-\varepsilon-V_{\varepsilon})V_{\varepsilon},
\eeaa
and \eqref{def-delta},
we deduce
\beaa
&&c^{\varepsilon}_{\mu_1}Q'+Q''+Q(1-Q)\\
&\geq& \varepsilon V_{\varepsilon}(\xi) -2c^{\varepsilon}_{\mu_1} \delta V_{\varepsilon}(\xi_0)-2\delta V_{\varepsilon}(\xi_0)
-\delta V_{\varepsilon}(\xi_0)(1-V_{\varepsilon}(\xi))-\delta^2 V_{\varepsilon}^2(\xi_0)\\
&\geq& V_{\varepsilon}(\xi_0)(\varepsilon-2c^{\varepsilon}_{\mu_1}\delta-4\delta)\\
&=& 0\qquad \mbox{ for } \xi\in[\xi_0,\xi_1].
\eeaa

To complete the proof, it remains to show the existence of $s^{\varepsilon}$.
Note that
$$Q(\xi_0)=V_{\varepsilon}(\xi_0)\in[1-2\varepsilon,1-\varepsilon].$$
By \eqref{xi 1}, we have
\bea\label{Q(xi1)}
1-2\varepsilon\leq Q(\xi_0) \leq Q(\xi_1)\leq V_{\varepsilon}(\xi_1) \leq 1-\varepsilon.
\eea

By Lemma~\ref{lem:q}, $q_s(-z(s))$ is a continuous and increasing function of $s$ for $s\in (0, s^*_{\mu_2})$, and $q_s(-z(s))\to 1$ as $s\to s^*_{\mu_2}$. Therefore, in view of \eqref{Q(xi1)}, for each small $\varepsilon>0$ there exists $s^\epsilon\in (0, s^*_{\mu_2})$ such that
{\[
%Q^{\varepsilon}_+(\xi_1)=q_{s^\varepsilon}(-z(s^\varepsilon)).
Q(\xi_1)=q_{s^\varepsilon}(-z(s^\varepsilon)).
\]}
Moreover, $s^{\varepsilon}\to s^*_{\mu_2}$ as $\varepsilon\to 0$.
Thus \eqref{connect+} holds.
The proof of Lemma~\ref{lem:Q-} is now complete.
\end{proof}

We now consider the case $\xi\leq 0$. We define
\bea\label{Q minus}
Q^{\varepsilon}_{-}(\xi):=\begin{cases}
V_{\varepsilon}(\xi) & \mbox{ for } \xi_2\leq\xi\leq0,\\
V_{\varepsilon}(\xi)+\gamma(\xi-\xi_2)V_{\varepsilon}(\xi_2)& \mbox{ for } -\infty<\xi\leq \xi_{2},
\end{cases}
\eea
where
\beaa
\gamma(\xi)=\gamma(\xi;\lambda):=-\big(e^{\lambda\xi}+e^{-\lambda\xi}-2\big),
\eeaa
with $\lambda>0$ and $\xi_2<0$ to be determined below.

\begin{lemma}\label{lem:Q-}
Let $\varepsilon>0$ be sufficiently small and $(U_\varepsilon,V_\varepsilon)$ be the solution of \eqref{perturbed system}
with $c=c^{\varepsilon}_{\mu_1}$.
Then there exist $\lambda=\lambda(\varepsilon)>0$ sufficiently small and $\xi_2=\xi_2(\varepsilon)<0$ such that
$V_{\varepsilon}(\xi_2)=Q^{\varepsilon}_{-}(\xi_2)<\varepsilon$ and
\bea\label{property-Q-}
Q^{\varepsilon}_{-}\in C^1((-\infty,0])\cap C^2((-\infty,0]\backslash\{\xi_2\}),\ (Q^{\varepsilon}_{-})'(\xi)>0\
\mbox{for all $\xi<0$}.
\eea
Moreover, there exists a unique $\xi_3\in(-\infty,\xi_2)$ depending on $\xi_2$ and $\lambda$ such that $Q^{\varepsilon}_{-}(\xi_3)=0$
and the following inequality holds:
\bea\label{Q-ineq}
c^{\varepsilon}_{\mu_1} (Q^{\varepsilon}_{-})'+(Q^{\varepsilon}_{-})''+Q^{\varepsilon}_{-} (1-Q^{\varepsilon}_{-}-hU_{\varepsilon})\geq0\
\mbox{for $\xi\in(\xi_3,0)\backslash\{\xi_2\}$}.
\eea
\end{lemma}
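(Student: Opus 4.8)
\emph{Plan.} I will verify the four assertions in turn, fixing a sufficiently negative $\xi_2=\xi_2(\varepsilon)<0$ first and then choosing $\lambda=\lambda(\varepsilon)>0$ small. Since the solution of \eqref{perturbed system} satisfies $V_{\varepsilon}(-\infty)=0$, $V_{\varepsilon}'>0$, $U_{\varepsilon}(-\infty)=1+\varepsilon$ and $U_{\varepsilon}'<0$ on $(-\infty,0)$, and since $h>1$, I fix $\xi_2<0$ so negative that $V_{\varepsilon}(\xi_2)<\varepsilon$ and $U_{\varepsilon}(\xi)>1/h$ for all $\xi\le\xi_2$. Because $\gamma(0)=0$ we have $Q^{\varepsilon}_{-}(\xi_2)=V_{\varepsilon}(\xi_2)<\varepsilon$, which is the first assertion; the inequality $U_{\varepsilon}>1/h$ on $(-\infty,\xi_2)$ is the structural input that will give the correction term a favourable sign below.

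\emph{Regularity, monotonicity and $\xi_3$.} From $\gamma(0)=\gamma'(0)=0$ one sees that $Q^{\varepsilon}_{-}$ matches $V_{\varepsilon}(\xi_2)$ and $V_{\varepsilon}'(\xi_2)$ across $\xi_2$, so $Q^{\varepsilon}_{-}\in C^1((-\infty,0])$; on each of the two pieces in \eqref{Q minus} it is smooth, so $Q^{\varepsilon}_{-}\in C^2((-\infty,0]\setminus\{\xi_2\})$, and only $(Q^{\varepsilon}_{-})''$ has a jump at $\xi_2$, namely an upward jump of $-\gamma''(0)V_{\varepsilon}(\xi_2)=2\lambda^2V_{\varepsilon}(\xi_2)>0$, so \eqref{Q-ineq} will even hold in the weak sense across $\xi_2$. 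For $\xi<\xi_2$ one has $\gamma'(\xi-\xi_2)=\lambda\big(e^{-\lambda(\xi-\xi_2)}-e^{\lambda(\xi-\xi_2)}\big)>0$, hence $(Q^{\varepsilon}_{-})'=V_{\varepsilon}'+\gamma'(\xi-\xi_2)V_{\varepsilon}(\xi_2)>0$, while $(Q^{\varepsilon}_{-})'=V_{\varepsilon}'>0$ on $(\xi_2,0)$; this is \eqref{property-Q-}. Finally $\gamma(\xi-\xi_2)\to-\infty$ as $\xi\to-\infty$ while $V_{\varepsilon}(\xi)\to0$, so $Q^{\varepsilon}_{-}(\xi)\to-\infty$; being continuous, strictly increasing on $(-\infty,\xi_2)$ and positive at $\xi_2$, $Q^{\varepsilon}_{-}$ has a unique zero $\xi_3\in(-\infty,\xi_2)$, and $|\gamma(\xi_3-\xi_2)|=V_{\varepsilon}(\xi_3)/V_{\varepsilon}(\xi_2)<1$.

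\emph{The differential inequality.} On $(\xi_2,0)$ we have $Q^{\varepsilon}_{-}=V_{\varepsilon}$, so by the $V$-equation in \eqref{perturbed system} the left side of \eqref{Q-ineq} equals $\varepsilon V_{\varepsilon}>0$. On $(\xi_3,\xi_2)$, writing $\gamma=\gamma(\xi-\xi_2)$, $V=V_{\varepsilon}$, $U=U_{\varepsilon}$, $V_2=V_{\varepsilon}(\xi_2)$, using $c^{\varepsilon}_{\mu_1}V'+V''=-V(1-\varepsilon-V-hU)$ and $c^{\varepsilon}_{\mu_1}\gamma'+\gamma''=c^{\varepsilon}_{\mu_1}\gamma'+\lambda^2\gamma-2\lambda^2$, a direct computation gives
\[
c^{\varepsilon}_{\mu_1}(Q^{\varepsilon}_{-})'+(Q^{\varepsilon}_{-})''+Q^{\varepsilon}_{-}(1-Q^{\varepsilon}_{-}-hU)=\varepsilon V+V_2\Big[c^{\varepsilon}_{\mu_1}\gamma'+\lambda^2\gamma-2\lambda^2+\gamma(1-hU)-2\gamma V-\gamma^2V_2\Big].
\]
Here $\gamma<0$, $\gamma'>0$, and $1-hU<0$ by the choice of $\xi_2$, so $\gamma(1-hU)\ge0$; moreover $Q^{\varepsilon}_{-}>0$ on $(\xi_3,\xi_2)$ gives $|\gamma|V_2<V$, hence $-2\gamma V-\gamma^2V_2=|\gamma|(2V-|\gamma|V_2)>|\gamma|V>0$. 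Thus the left side of \eqref{Q-ineq} is $\ge\varepsilon V+V_2\big(c^{\varepsilon}_{\mu_1}\gamma'+\lambda^2\gamma-2\lambda^2\big)$. I split $(\xi_3,\xi_2)$: where $\xi_2-\xi\ge1/c^{\varepsilon}_{\mu_1}$ one has $c^{\varepsilon}_{\mu_1}\gamma'=2c^{\varepsilon}_{\mu_1}\lambda\sinh\!\big(\lambda(\xi_2-\xi)\big)\ge2\lambda^2$, so the bracket is $\ge\lambda^2\gamma=-\lambda^2|\gamma|$ and, using $V>|\gamma|V_2$, the lower bound is $\ge(\varepsilon-\lambda^2)|\gamma|V_2\ge0$ once $\lambda^2\le\varepsilon$; where $0<\xi_2-\xi<1/c^{\varepsilon}_{\mu_1}$, the point $\xi$ lies in the fixed compact interval $[\xi_2-1/c^{\varepsilon}_{\mu_1},\xi_2]$ on which $V_{\varepsilon}\ge m_\varepsilon>0$ for a constant $m_\varepsilon$ independent of $\lambda$, while $|\gamma|\to0$ uniformly there as $\lambda\to0$, so $c^{\varepsilon}_{\mu_1}\gamma'+\lambda^2\gamma-2\lambda^2\ge-3\lambda^2$ for $\lambda$ small and the lower bound is $\ge\varepsilon m_\varepsilon-3\lambda^2V_2\ge0$ once $\lambda^2\le\varepsilon m_\varepsilon/(3V_2)$. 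Choosing $\lambda=\lambda(\varepsilon)$ small enough to meet both constraints yields \eqref{Q-ineq}.

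\emph{Main obstacle.} The delicate point is exactly this last differential inequality on $(\xi_3,\xi_2)$: the linear correction $\gamma\,V_{\varepsilon}(\xi_2)$ introduces errors of order $\lambda^2$, yet the interval $(\xi_3,\xi_2)$ itself lengthens without bound as $\lambda\downarrow0$, so no single global-in-$\lambda$ estimate is available. The resolution is the two-region split above — a fixed compact window adjacent to $\xi_2$, on which $V_{\varepsilon}$ has a $\lambda$-independent positive lower bound, versus the far region, on which $c^{\varepsilon}_{\mu_1}\gamma'$ already dominates $2\lambda^2$ — and it only works because $\xi_2$ was first pushed far enough left that $hU_{\varepsilon}>1$, keeping $\gamma(1-hU_{\varepsilon})$ from spoiling the sign. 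The remaining items (regularity, monotonicity, existence and uniqueness of $\xi_3$, and $V_{\varepsilon}(\xi_2)<\varepsilon$) are routine given \eqref{perturbed system} and the elementary properties of $\gamma$.
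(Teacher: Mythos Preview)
Your proof is correct and takes a genuinely different route from the paper. The paper first fixes $\lambda$ small (via an auxiliary Lemma~\ref{lem:F} on the function $F$ in \eqref{F}), then chooses $\xi_2=\xi_2(\lambda)$ very negative, and crucially invokes the asymptotic expansion \eqref{AS-V}, $V_\varepsilon(\xi)\sim Ce^{\mu\xi}$, to reduce \eqref{Q-ineq} to the positivity of $F$; the existence of $\xi_3$ is likewise obtained through \eqref{AS-V} and a suitable $K_\lambda$. You reverse the order: fix $\xi_2$ first (so that $hU_\varepsilon>1$ there), then pick $\lambda$; the key new observation is that on $(\xi_3,\xi_2)$ the very definition of $\xi_3$ gives $V_\varepsilon>|\gamma|V_\varepsilon(\xi_2)$, which lets you absorb the dangerous $\lambda^2\gamma$ term into $\varepsilon V_\varepsilon$ in the far region, while on the fixed near window $[\xi_2-1/c^\varepsilon_{\mu_1},\xi_2]$ a $\lambda$-independent lower bound on $V_\varepsilon$ handles the remaining $O(\lambda^2)$ error. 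Your argument is more elementary and entirely self-contained --- it needs neither the precise exponential asymptotics of $V_\varepsilon$ nor the separate technical Lemma~\ref{lem:F} --- whereas the paper's approach parallels the machinery of \cite{DWZ} and gives slightly more explicit quantitative control (e.g.\ a bound on $\xi_2-\xi_3$ via $K_\lambda$), which is not actually needed downstream.
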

\begin{proof}
We write $Q^{\varepsilon}_-=Q$ for convenience of notation.
Using $\gamma'(0)=0$, it is straightforward to see that
\beaa
Q\in C^1((-\infty,0])\cap C^2((-\infty,0]\backslash\{\xi_2\})
\eeaa
for any choice of $\xi_2<0$.
Since $V'_{\varepsilon}>0$ in $\mathbb{R}$ and $\gamma'(\xi)>0$ for $\xi<0$, we have
\beaa
Q'(\xi)=\begin{cases}
V'_{\varepsilon}(\xi)>0 &\mbox{ for } \xi_2\leq\xi\leq0,\\
V'_{\varepsilon}(\xi)+\gamma'(\xi-\xi_2)V_{\varepsilon}(\xi_2)>0 &\mbox{ for } \xi\leq\xi_2.
\end{cases}
\eeaa
Hence \eqref{property-Q-} holds for any choice of $\xi_2<0$.

For any given $\lambda>0$, we take $K_{\lambda}>0$ such that
\bea\label{for xi3}
e^{ K_{\lambda}\lambda}+e^{- K_{\lambda}\lambda}-2>e^{-K_{\lambda}\mu},
\eea
where $\mu>0$ is  given in \eqref{AS-V}.
By \eqref{AS-V}, we have
\beaa
\frac{V_{\varepsilon}(\xi_2-K_{\lambda})}{V_{\varepsilon}(\xi_2)}\to e^{-K_{\lambda}\mu}\quad \mbox{as $\xi_2\to-\infty$}.
\eeaa
Together with \eqref{for xi3}, and $(U_{\varepsilon},V_{\varepsilon})(-\infty)=(1+\varepsilon,0)$,
we can take $\xi_2=\xi_2(\lambda)$ close to $-\infty$ such that
\bea
&&Q(\xi_2-K_{\lambda})=
                  V_{\varepsilon}(\xi_2)\left[\frac{V_{\varepsilon}(\xi_2-K_{\lambda})}{V_{\varepsilon}(\xi_2)}
                  -(e^{ K_{\lambda}\lambda}+e^{- K_{\lambda}\lambda}-2)\right]<0,\notag\\
&&V_{\varepsilon}(\xi_2)<\min\left\{\varepsilon,\frac{\varepsilon}{-\gamma(-K_{\lambda})}\right\},\quad U_{\varepsilon}(\xi_2)>1. \label{ineq V-xi2}
\eea
On the other hand, since $Q(\xi_2)=V_{\varepsilon}(\xi_2)>0$, we can apply the intermediate value theorem
to obtain $\xi_3\in(\xi_2-K_{\lambda},\xi_2)$ such that $Q(\xi_3)=0$. Such $\xi_3$ is unique because of the monotonicity of $Q$.

%Since $V_{\varepsilon}(-\infty)=0$ and $U_{\varepsilon}(-\infty)=1+\varepsilon$, if necessary we can take $\xi_2$ more negative such that
%\bea\label{ineq V-xi2}
%V_{\varepsilon}(\xi_2)<\min\left\{\varepsilon,\frac{\varepsilon}{-\gamma(-K_{\lambda})}\right\},\; U_{\varepsilon}(\xi_2)>1.
%\eea

Next we show that, if $\lambda>0$ has been chosen small enough,
with the above determined $\xi_2$ and $\xi_3$,  \eqref{Q-ineq} holds.
To do this, we consider \eqref{Q-ineq} for $\xi\in(\xi_3,\xi_2)$ and $\xi\in[\xi_2,0)$ separately.

For $\xi\in(\xi_3,\xi_2)$, we write $V_{\varepsilon}=V_{\varepsilon}(\xi)$, $\gamma=\gamma(\xi-\xi_2)$ and obtain
\beaa
&&c^{\varepsilon}_{\mu_1}Q'+Q''+Q(1-Q-hU_{\varepsilon})\\
&=&c^{\varepsilon}_{\mu_1}V'_{\varepsilon}+V''_{\varepsilon}+V_{\varepsilon}(\xi_2)\big[c^{\varepsilon}_{\mu_1}\gamma'+\gamma''\big]
    +\big[V_{\varepsilon}+\gamma V_{\varepsilon}(\xi_2)\big]\big[1-V_{\varepsilon}-\gamma V_{\varepsilon}(\xi_2)-hU_{\varepsilon}\big]\\
&=&-V_{\varepsilon}(1-\varepsilon-V_{\varepsilon}-hU_{\varepsilon})+V_{\varepsilon}(\xi_2)\big[c^{\varepsilon}_{\mu_1}\gamma'+\gamma''\big]
    +\big[V_{\varepsilon}+\gamma V_{\varepsilon}(\xi_2)\big]\big[1-V_{\varepsilon}-\gamma V_{\varepsilon}(\xi_2)-hU_{\varepsilon}\big]\\
&\geq& \varepsilon V_{\varepsilon}+V_{\varepsilon}(\xi_2)\big[c^{\varepsilon}_{\mu_1}\gamma'+\gamma''\big]
     -\gamma V_{\varepsilon}(\xi_2) \big[hU_{\varepsilon}+\gamma V_{\varepsilon}(\xi_2)-1\big].
\eeaa
By \eqref{ineq V-xi2}, for $\xi\in(\xi_3,\xi_2)$,
\beaa
U_{\varepsilon}\geq 1,\;
0>\gamma V_{\varepsilon}(\xi_2)=\gamma(\xi-\xi_2)V_{\varepsilon}(\xi_2)\geq\gamma(-K_{\lambda})V_{\varepsilon}(\xi_2)>-\varepsilon.
\eeaa
It follows that
\begin{equation}\label{key ineq}
\begin{array}{ll}
&c^{\varepsilon}_{\mu_1}Q'+Q''+Q(1-Q-hU_{\varepsilon})\smallskip\\
\geq& \varepsilon V_{\varepsilon}+V_{\varepsilon}(\xi_2)\big[c^{\varepsilon}_{\mu_1}\gamma'+\gamma''\big]-\gamma V_{\varepsilon}(\xi_2)\big[h-\varepsilon-1\big] \mbox{ for } \xi_3<\xi<\xi_2.
\end{array}
\end{equation}
Using \eqref{AS-V}, we see that the right side of \eqref{key ineq} is nonnegative if the following inequality holds:
\bea\label{key ineq2}
\varepsilon e^{\mu(\xi-\xi_2)}+c^{\varepsilon}_{\mu_1}\gamma'+\gamma''-[h-\varepsilon-1]\gamma>0 \mbox{ for } \xi_3<\xi<\xi_2.
\eea
We shall show that \eqref{key ineq2} indeed holds provided that $\lambda>0$ has been chosen small enough.
To check this, for $t=\xi_2-\xi\geq 0$ we define
\be\label{F}
\begin{array}{ll}
F(t):=&\varepsilon e^{-\mu t}
     -c^{\varepsilon}_{\mu_1}\lambda(e^{-\lambda t}-e^{\lambda t})-\lambda^2(e^{-\lambda t}+e^{\lambda t})\smallskip
      \\&\quad+[h-\varepsilon-1](e^{-\lambda t}+e^{\lambda t}-2).
\end{array}
\ee
By Lemma~\ref{lem:F} below, we can take small $\lambda$ depending only on $\varepsilon$ such that $F(t)>0$ for all $t\geq0$.
This implies \eqref{key ineq2}, and so \eqref{Q-ineq}  holds for $\xi\in(\xi_3,\xi_2]$.

For $\xi\in(\xi_2,0)$, we have $Q'(\xi)=V_{\varepsilon}(\xi)$. From \eqref{perturbed system}, it is straightforward to see that \eqref{Q-ineq}
holds for $\xi\in (\xi_2, 0)$.
 This completes the proof.
\end{proof}

\begin{lemma}\label{lem:F}
Let $\varepsilon>0$ and $F:[0,\infty)\to\mathbb{R}$ be defined by \eqref{F}.
Then $F(t)>0$ for all $t\geq0$ as long as $\lambda>0$ is small enough.
\end{lemma}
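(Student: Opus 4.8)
The plan is to reduce $F$ to a more transparent form by discarding a manifestly nonnegative term, and then to prove positivity of what remains separately on a bounded initial window $[0,t_0]$ and on its complement $[t_0,\infty)$, using a different mechanism on each.

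First I would abbreviate $a:=h-\varepsilon-1$, which is positive once $\varepsilon$ is small, by \eqref{h-k}. Rewriting \eqref{F} via $e^{\lambda t}-e^{-\lambda t}=2\sinh(\lambda t)$ and $e^{\lambda t}+e^{-\lambda t}=2\cosh(\lambda t)$ gives
\[
F(t)=\varepsilon e^{-\mu t}+2c^{\varepsilon}_{\mu_1}\lambda\sinh(\lambda t)-2\lambda^2\cosh(\lambda t)+2a\bigl(\cosh(\lambda t)-1\bigr).
\]
For $t\ge 0$ the second term is nonnegative, so it suffices to show
\[
G(t):=\varepsilon e^{-\mu t}-2\lambda^2\cosh(\lambda t)+2a\bigl(\cosh(\lambda t)-1\bigr)>0\qquad(t\ge0).
\]
Here only $-2\lambda^2\cosh(\lambda t)$ is negative, and the idea is to absorb it into $2a(\cosh(\lambda t)-1)$ when $t$ is large and into $\varepsilon e^{-\mu t}$ when $t$ is small.

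I would make this quantitative with the threshold $t_0:=\sqrt{2/(a-\lambda^2)}$, well defined once $\lambda^2<a$, noting $\lambda t_0=\lambda\sqrt{2/(a-\lambda^2)}\to0$ as $\lambda\to0$. For $t\ge t_0$, the elementary bound $\cosh(y)\ge 1+y^2/2$ yields $\cosh(\lambda t)\ge 1+(\lambda t)^2/2\ge 1+(\lambda t_0)^2/2=a/(a-\lambda^2)$, hence $\bigl(1-\tfrac{\lambda^2}{a}\bigr)\cosh(\lambda t)\ge1$, i.e.\ $2a(\cosh(\lambda t)-1)\ge 2\lambda^2\cosh(\lambda t)$, so $G(t)\ge\varepsilon e^{-\mu t}>0$. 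For $0\le t\le t_0$, shrinking $\lambda$ further so that also $\cosh(\lambda t_0)\le2$ (possible since $\lambda t_0\to0$) and $4\lambda^2<\varepsilon e^{-\mu t_0}$ (possible since the right-hand side is a fixed positive constant depending only on $\varepsilon$), one gets $2\lambda^2\cosh(\lambda t)\le4\lambda^2<\varepsilon e^{-\mu t_0}\le\varepsilon e^{-\mu t}$, whence $G(t)>2a(\cosh(\lambda t)-1)\ge0$. Combining the two ranges gives $F(t)\ge G(t)>0$ for all $t\ge0$.

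The step I expect to require the most care is the calibration of $t_0$. It must be large enough that on $[t_0,\infty)$ the quadratic growth of $2a(\cosh(\lambda t)-1)$ overtakes $2\lambda^2\cosh(\lambda t)$, which forces $\lambda t_0$ to be of order $\lambda/\sqrt a$ and hence $t_0$ of order $1/\sqrt a$; yet precisely this choice keeps $t_0$ bounded independently of $\lambda$, so that $e^{-\mu t_0}$ stays bounded below while $\lambda^2\to0$ — which is exactly what the small-$t$ estimate needs. That $t_0$ may be taken independent of $\lambda$ is the structural fact that makes the two bounds compatible.
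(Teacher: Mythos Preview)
Your proof is correct and takes a genuinely different route from the paper's. The paper argues via convexity: after restricting $\lambda$, it checks $F(0)>0$, $F'(0)<0$, $F'(\infty)=\infty$, $F''>0$, so $F$ has a unique minimum at some $t_\lambda$; it then shows $t_\lambda\to\infty$ and $\lambda t_\lambda\to0$ as $\lambda\to0$, substitutes $F'(t_\lambda)=0$ back into $F(t_\lambda)$, and verifies the resulting expression is positive. Your argument instead discards the nonnegative $\sinh$ term, introduces the explicit threshold $t_0=\sqrt{2/(a-\lambda^2)}\approx\sqrt{2/a}$, and on each side uses a different elementary inequality: for $t\ge t_0$ the quadratic lower bound on $\cosh$ makes the $a$-term absorb the $\lambda^2$-term, while for $t\le t_0$ the boundedness of $t_0$ keeps $e^{-\mu t}$ bounded below so that $\varepsilon e^{-\mu t}$ absorbs $2\lambda^2\cosh(\lambda t)$. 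Your approach is shorter and avoids the asymptotic analysis of the minimizer; the paper's approach is more systematic (it mirrors \cite[Lemma~3.3]{DWZ}) and makes the role of the critical scale $\lambda t_\lambda\to0$ explicit. One small wording point: in your final paragraph, ``$t_0$ may be taken independent of $\lambda$'' should read ``bounded independently of $\lambda$'' (as you in fact say a sentence earlier), since $t_0$ does depend on $\lambda$ but converges to $\sqrt{2/a}$.
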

\begin{proof} The argument is similar to \cite[Lemma 3.3]{DWZ}.
 Let $\kappa:=h-\varepsilon-1$. Note that $\kappa>0$ since $h>1$.
By direct computations,
\beaa
&&F'(t)=-\varepsilon\mu e^{-\mu t}+\lambda^2c^{\varepsilon}_{\mu_1}(e^{\lambda t}
+e^{-\lambda t})+(\kappa\lambda-\lambda^3)(e^{\lambda t}-e^{-\lambda t}),\\
&&F''(t)=\varepsilon\mu^2 e^{-\mu t}+\lambda^3c^{\varepsilon}_{\mu_1}(e^{\lambda t}
-e^{-\lambda t})+(\kappa\lambda^2-\lambda^4)(e^{\lambda t}+e^{-\lambda t}),
\eeaa
where $\mu>0$ is given in \eqref{AS-V}.
By taking
\beaa
\lambda\in\left(0,\min\left\{\sqrt{\frac{\varepsilon}{2}},\ \sqrt{\frac{\varepsilon\mu}{2c^{\varepsilon}_{\mu_1}}},\ \sqrt{\kappa}\right\}\right),
\eeaa
we have $F(0)>0$, $F'(0)<0$, $F'(\infty)=\infty$ and $F''(t)>0$ for $t\geq0$.
If follows that  $F$ has a unique minimum point $t=t_{\lambda}$.
Consequently, to finish the proof of Lemma~\ref{lem:F}, it suffices to show the following:
\bea\label{goal-lem F}
\mbox{$F(t_{\lambda})\geq0$ as long as $\lambda>0$ is small}.
\eea
By direct calculation,
$F'(t_{\lambda})=0$ implies that
\bea\label{eq-t-lambda}
\varepsilon\mu e^{-\mu t_{\lambda}}=\lambda^2c^{\varepsilon}_{\mu_1}(e^{\lambda t_{\lambda}}
+e^{-\lambda t_{\lambda}})+(\kappa\lambda-\lambda^3)(e^{\lambda t_{\lambda}}-e^{-\lambda t_{\lambda}}).
\eea
From \eqref{eq-t-lambda}, we easily deduce $t_\lambda\to \infty$ as $\lambda\to 0$, for otherwise, the left hand side of \eqref{eq-t-lambda}
is {bounded below} by a positive constant while the right hand side converges to 0 as $\lambda\to 0$ along some sequence.
Multiplying $t_\lambda$ to both sides of \eqref{eq-t-lambda} and we obtain, by a similar consideration, that
$\lambda t_\lambda$ is bounded from above by a positive constant as $\lambda\to 0$. It then follows
that
\[
\kappa \lambda t_\lambda e^{\lambda t_\lambda}\to 0 \mbox{ as } \lambda\to 0,
\]
which implies $\lambda t_\lambda\to 0$ as $\lambda\to 0$. We thus obtain
\bea\label{lambda-behavior}
t_{\lambda}\to\infty,\quad \lambda t_{\lambda}\to0\quad\mbox{as $\lambda\to0^+$}.
\eea
It follows that
\bea\label{lim-ratio}
%\lim_{\lambda\to 0^+}\frac{e^{\lambda t_{\lambda}}+e^{-\lambda t_{\lambda}}-2}{ (\lambda t_{\lambda})^2 }=
\lim_{\lambda\to 0^+}\frac{e^{\lambda t_{\lambda}}-e^{-\lambda t_{\lambda}}}{ 2\lambda t_{\lambda} }
=\lim_{\lambda\to 0^+}\frac{e^{\lambda t_{\lambda}}+e^{-\lambda t_{\lambda}}}{ 2 }
=1.
\eea

We now prove \eqref{goal-lem F}. Substituting \eqref{eq-t-lambda} into $F$, we have
\beaa
F(t_{\lambda})&=&(e^{\lambda t_{\lambda}}-e^{-\lambda t_{\lambda}})
\left(c^{\varepsilon}_{\mu_1} \lambda-\frac{\lambda^3}{\mu}+\frac{\kappa}{\mu}\lambda\right)
\\
&&+\lambda^2(e^{\lambda t_{\lambda}}+e^{-\lambda t_{\lambda}})\left(\frac{c^{\varepsilon}_{\mu_1}}{\mu}-1\right)+
\kappa(e^{\lambda t_{\lambda}}+e^{-\lambda t_{\lambda}}-2)
\eeaa
Using \eqref{lambda-behavior} and \eqref{lim-ratio}, for small $\lambda>0$,
\beaa
F(t_{\lambda})&\geq& 2\lambda t_{\lambda}[1+o(1)]\left(c^{\varepsilon}_{\mu_1}\lambda-\frac{\lambda^3}{\mu}+\frac{\kappa}{\mu}\lambda\right)+
\lambda^2[2+o(1)]\left(\frac{c^{\varepsilon}_{\mu_1}}{\mu}-1\right)\\
&=&2 \lambda^2t_\lambda\left[ c_{\mu_1}^{\varepsilon}+\frac{\kappa}{\mu}+o(1)\right]\\
&>& 0.
\eeaa
 This completes the proof.
\end{proof}

Combining \eqref{Q plus} and \eqref{Q minus} we now define
\bea\label{Q}
\widehat{Q}^{\varepsilon}(\xi):=\begin{cases}
Q^\varepsilon_{+}(\xi) &\mbox{ for } \xi\in[0,\xi_1],\\
Q^\varepsilon_{-}(\xi) &\mbox{ for } \xi\in[\xi_3,0],\\
0 &\mbox{ for } \xi\in(-\infty,\xi_3],
\end{cases}
\eea
where  $\xi_1>0$ and $\xi_3<0$ are given in Lemmas~\ref{lem:Q+} and \ref{lem:Q-}, respectively.
We then define
\beaa\label{W}
W_{\varepsilon}(r):=\begin{cases}
0 &\mbox{ for } r\in[\xi_1+z(s^{\varepsilon}),\infty).\\
q_{s^\varepsilon}(r-\xi_1-z(s^{\varepsilon})) &\mbox{ for } r\in[\xi_1, \xi_1+z(s^{\varepsilon})],\\
\widehat{Q}^{\varepsilon}(r) & \mbox{ for } r\in(-\infty,\xi_1],
\end{cases}
\eeaa
with $s^{\varepsilon}\in (0, s^*_{\mu_2})$ given in Lemma~\ref{lem:Q+}.
Then clearly $W_{\varepsilon}\in C(\mathbb{R})$ has  compact support $[\xi_3,\xi_1+z(s^{\varepsilon})]$.

\bigskip

We are now ready to describe the  conditions in Theorem 1 on the initial functions $u_0$ and $v_0$.
Since $s^{\varepsilon}\to s^*_{\mu_2}>c^*_{\mu_1}$ and $\xi_0(\varepsilon)\to \infty$ as $\varepsilon\to 0$,
where $\xi_0(\varepsilon)$ is defined in Lemma~\ref{lem:Q+},
 we can fix $\varepsilon_0>0$ small so that
\be\label{ep0}
s^{\varepsilon}-c^{\varepsilon}_{\mu_1}>\frac{N-1}{\xi_0(\varepsilon)}>0 \mbox{ for all } \varepsilon\in (0,\varepsilon_0],
\ee
where $N$ is the space dimension.
Our first condition  is
\smallskip

{\bf (B1):}  For some $z_0>0$ and small $\varepsilon_0>0$ as above,
\[
u_0(r)\leq U_{\varepsilon_0}(r-z_0), \ v_0(r)\geq W_{\varepsilon_0}(r-z_0) \mbox{ for } r\geq 0.
\]
We note that {\bf (B1)} implies
\[
s_1^0\leq z_0 \mbox{ and } s_2^0\geq \xi_1(\varepsilon_0)+z(s^{\varepsilon_0})+z_0.
\]

%Let  $\lambda_1(R)$ be the principal eigenvalue of
%\beaa
%-\Delta \phi= \lambda \phi\quad \mbox{in $B_R$},\quad \phi=0\quad \mbox{on $\partial B_R$}.
%\eeaa
%It is well known that there exists a unique $R^*$ such that $\lambda_1(R^*)=1$. Our second condition in Part B is
\smallskip

Our second condition is
\smallskip

{\bf (B2):}  $s_1^{0}\geq R^*\sqrt{\frac{d}{r(1-k)}}$, where $R^*>0$ is defined in Corollary 1.
\smallskip
\\
Since $\limsup_{t\to\infty} v(r,t)\leq 1$ uniformly in $r\in [0, s_2(t)]$, it is easy to see that  {\bf (B2)} guarantees
$s_{1,\infty}=\lim_{t\to\infty} s_1(t)=\infty$ {(see also the proof of Theorem~\ref{prop-trichotomy})}.
\bigskip

We are now ready to prove Theorem 1, which we restate as
\begin{thm}\label{thm: speed}
Suppose that \eqref{h-k}, \eqref{A2}, {\bf (B1)} and {\bf (B2)} hold. Then the solution
$(u,v,s_1,s_2)$  of {\bf(P)} satisfies
\beaa
\lim_{t\to\infty}\frac{s_1(t)}{t}=c^*_{\mu_1},\quad \lim_{t\to\infty}\frac{s_2(t)}{t}=s^*_{\mu_2},
\eeaa
and for every small $\epsilon>0$, \eqref{u-v-1}, \eqref{v-2} hold.

\end{thm}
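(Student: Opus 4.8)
The strategy is to squeeze $(u,v,s_1,s_2)$ between carefully chosen sub- and supersolutions of {\bf (P)} built out of the perturbed semi-wave pair $(U_\varepsilon,V_\varepsilon)$ and the auxiliary function $W_\varepsilon$ constructed in Lemmas~\ref{lem:Q+}--\ref{lem:F}, and then let $\varepsilon\to 0$. First, condition {\bf (B2)} together with the a priori bound $\limsup_{t\to\infty}v(r,t)\le 1$ (from the logistic comparison in Lemma~\ref{lem:estimate1}) guarantees $s_{1,\infty}=\infty$; then the comparison with the single-species free boundary problem (as in the first part of the proof of Lemma~\ref{lem:estimate1}) gives $s_{2,\infty}=\infty$ as well, so Lemma~\ref{lem:estimate1} becomes applicable once we know the two speeds are separated.

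\textbf{Upper estimate for $s_1$.} For each small $\varepsilon\in(0,\varepsilon_0]$, I would use the moving profile $\overline u(r,t):=U_\varepsilon(r-c^\varepsilon_{\mu_1}t-z_0)$ on $\{r\le c^\varepsilon_{\mu_1}t+z_0\}$, together with $W_\varepsilon$-type lower barriers for $v$, to show that $(\overline u,\,\cdot\,)$ is a supersolution for the $u$-component: the key point is that the elliptic operator in {\bf (P)} has the extra radial term $\frac{N-1}{r}u_r$, which has a favorable sign for a decreasing profile, and that condition {\bf (B1)} provides the initial ordering $u_0(r)\le U_{\varepsilon_0}(r-z_0)$. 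Since $U_\varepsilon$ is a semi-wave with $\mu_1 U_\varepsilon'(0)=c^\varepsilon_{\mu_1}$, the free-boundary condition $s_1'=-\mu_1 u_r(s_1,t)$ is respected, yielding $s_1(t)\le c^\varepsilon_{\mu_1}t+z_0$ and $u(r,t)\le U_\varepsilon(r-c^\varepsilon_{\mu_1}t-z_0)$ for all $t$. Letting $\varepsilon\to 0$ and using $c^\varepsilon_{\mu_1}\to c^*_{\mu_1}$ gives $\limsup_{t\to\infty}s_1(t)/t\le c^*_{\mu_1}$, and in fact $u\to 0$ uniformly on $\{r\ge(c^*_{\mu_1}+\epsilon)t\}$. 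Combined with the already-known $\limsup s_2(t)/t\le s^*_{\mu_2}$, this provides the needed speed separation to invoke Lemma~\ref{lem:estimate1}, which then yields $v\to 1$ uniformly on compact subsets of $\{(c^*_{\mu_1}+\epsilon)t\le r\le(s^*_{\mu_2}-\epsilon)t\}$, i.e.\ \eqref{v-2}, as well as $\limsup v\le 1$ globally.

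\textbf{Lower estimates and coexistence near $r=c^*_{\mu_1}t$.} Here I would build a \emph{compactly supported traveling subsolution} for the pair $(u,v)$ by gluing $U_\varepsilon$ near the origin side and the composite profile $\widehat Q^\varepsilon$, $W_\varepsilon$ for $v$: the function $\underline v(r,t):=W_\varepsilon(r-c^\varepsilon_{\mu_1}t-z_0)$ supported on $[\xi_3+c^\varepsilon_{\mu_1}t+z_0,\ \xi_1+z(s^\varepsilon)+c^\varepsilon_{\mu_1}t+z_0]$, together with a lower barrier for $u$ of semi-wave type moving at speed $c^\varepsilon_{\mu_1}$. The differential inequalities \eqref{Q+ineq} and \eqref{Q-ineq} are exactly what is needed to verify the subsolution property for $v$ in the two regions $\xi\in[0,\xi_1]$ and $\xi\in[\xi_3,0]$, while the sign condition \eqref{ep0} on $\frac{N-1}{\xi_0(\varepsilon)}$ controls the extra radial term at the interface. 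This forces $s_2(t)\ge s^\varepsilon t+O(1)$ for all small $\varepsilon$, hence $\liminf s_2(t)/t\ge s^*_{\mu_2}$, giving $\lim s_2(t)/t=s^*_{\mu_2}$; it also forces $s_1(t)\ge c^\varepsilon_{\mu_1}t+O(1)$ (the $u$-front cannot lag behind its semi-wave barrier), so $\lim s_1(t)/t=c^*_{\mu_1}$. Finally, on $\{r\le(c^*_{\mu_1}-\epsilon)t\}$ one shows $v\to0$ (it is eventually behind the support of $\underline v$, and an upper $v$-barrier traveling slightly faster than $c^*_{\mu_1}$ confines $v$ to $\{r\ge(c^*_{\mu_1}-\epsilon)t\}$), and then on this region $u$ solves a logistic-type equation with asymptotically vanishing competition term $kv$, so $u\to1$; together these give \eqref{u-v-1}.

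\textbf{Main obstacle.} The delicate part is the construction and verification of the \emph{global} composite sub/supersolutions in the radial setting: one must check all the matching (gluing) conditions at $\xi_0,\xi_1,\xi_2,\xi_3$ and at $r=0$, handle the weak-sense differential inequalities where second derivatives jump, and — crucially — ensure the unfavorable-sign contribution of the curvature term $\frac{N-1}{r}\partial_r$ near the fronts does not destroy the subsolution inequality; this is precisely why $\xi_0(\varepsilon)\to\infty$ and the quantitative gap \eqref{ep0} were arranged. A second subtlety is propagating the ordering for \emph{all} $t>0$ rather than large $t$, which requires the initial conditions {\bf (B1)}, {\bf (B2)} to be compatible with the barriers at $t=0$; the role of $z_0$ is to absorb the initial-layer discrepancy. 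Once these barriers are in place, the passage $\varepsilon\to0$ and the extraction of the uniform convergence statements \eqref{u-v-1}, \eqref{v-2} are routine applications of the comparison principle and interior parabolic estimates.
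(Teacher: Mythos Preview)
Your overall strategy is right in spirit, but several key steps are either misstated or missing.

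First, the subsolution for $v$ is \emph{not} a rigid translate of $W_\varepsilon$. Your definition $\underline v(r,t)=W_\varepsilon(r-c^\varepsilon_{\mu_1}t-z_0)$ moves the \emph{right} edge at speed $c^\varepsilon_{\mu_1}$ as well, which can never give $\liminf s_2(t)/t\ge s^*_{\mu_2}$. In the paper's construction $\underline V$ has \emph{two} speeds: the left piece $\widehat Q^\varepsilon(r-l(t))$ follows $l(t)=c^\varepsilon_{\mu_1}t+z_0$, while the right piece $q_{s^\varepsilon}(r-g(t))$ follows $g(t)=(s^\varepsilon-\sigma_\varepsilon)t+\mbox{const}$, and a growing constant plateau connects them; the curvature control \eqref{ep0} is used precisely on this right piece. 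Second, the comparison pair $(\overline U,\underline V)$ only yields $s_1\le l$, $u\le\overline U$, $s_2\ge g$, $v\ge\underline V$; it gives \emph{no} lower bound on $s_1$ or $u$. Your sentence ``it also forces $s_1(t)\ge c^\varepsilon_{\mu_1}t+O(1)$'' does not follow. The paper obtains $\liminf s_1(t)/t\ge c^*_{\mu_1}$ and $u\to 1$ on $\{r\le (c^*_{\mu_1}-\epsilon)t\}$ by comparing $(u,v,s_1)$ with the solution $(\hat u,\hat v,h)$ of problem {\bf (Q)} (with $\hat v_0\ge v_0$ and $\liminf_{r\to\infty}\hat v_0>0$), invoking Theorem~B and the $\psi_\delta$ barriers from \cite{DWZ}. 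This is an essential ingredient you are missing.

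Third, your mechanism for $v\to 0$ on $\{r\le(c^*_{\mu_1}-\epsilon)t\}$ is inverted: $v>0$ throughout $[0,s_2(t))$, so no ``upper $v$-barrier traveling slightly faster than $c^*_{\mu_1}$'' can confine it away from the origin. The paper first establishes $u\to 1$ uniformly on this region (via the {\bf (Q)}-comparison above), and \emph{then} argues by contradiction: along any sequence $(r_k,t_k)$ with $v(r_k,t_k)\ge\delta_0$, a compactness/limiting argument produces an entire nonnegative bounded solution of $v_t=\Delta v+v(1-h-v)$ which must vanish since $h>1$. Finally, note that {\bf (B1)} only provides the initial ordering for $\varepsilon=\varepsilon_0$; to pass to arbitrarily small $\varepsilon$ one must bootstrap: use the $\varepsilon_0$-barriers to get rough speed bounds, apply Lemma~\ref{lem:estimate1} to obtain $v\ge 1-\varepsilon$ on a moving interval at some large time $T_\varepsilon$, and restart the $\varepsilon$-barriers from $t=T_\varepsilon$ with a new shift $z_1$. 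This two-stage structure (Steps~1--2 in the paper) is essential and absent from your outline.
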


Before giving the proof of Theorem 3, let us first observe how Corollary 1 follows easily from Theorem 3. It suffices to show that assumptions (i) and (ii) in Corollary 1 imply {\bf (B1)} and {\bf (B2)}.
Recall that
\[
U_{\varepsilon_0}(-\infty)=1+\varepsilon_0 \mbox{ and } U_{\varepsilon_0}(\xi)=0 \mbox{ for } \xi\geq 0.
\]
Therefore, for fixed $s_1^0\geq  R^*\sqrt{\frac{d}{r(1-k)}}$, there exists $C_1>0$ large such that
\[
U_{\varepsilon_0}(r-z_0)\geq 1 \mbox{ for } r\in [0, s_1^0],\; z_0\geq C_1.
\]
Hence for any given $u_0$ satisfying (i) of Corollary 1, {\bf (B2)} and the first inequality in {\bf (B1)} are satisfied
if we take $z_0\geq C_1$.

From the definition of $W_{\varepsilon_0}$ we see that
\[
W_{\varepsilon_0}(\xi)\leq 1 \mbox{ for $\xi\in \mathbb{R}^1$,  and } W_{\varepsilon_0}(\xi)=0 \mbox{ for } \xi\not\in [\xi_3, \xi_1+
z(s^{\varepsilon_0})].
\]
If we take
\[
C_0:=\max\big\{ C_1, \;\xi_1+z(s^{\varepsilon_0})-\xi_3\big\},
\]
and  for $x_0\geq C_0$ and $L\geq C_0$, we let $z_0:=x_0-\xi_3$, then
\[
z_0\geq x_0\geq C_1,\;
[\xi_3, \xi_1+
z(s^{\varepsilon_0})]\subset [x_0-z_0, x_0+L-z_0],
\]
and hence $W_{\varepsilon_0}(r-z_0)=0$ for $r\not\in [x_0, x_0+L]$. Thus when (ii) in Corollary 1 holds, we have
\[
v_0(r)\geq W_{\varepsilon_0}(r-z_0) \mbox{ for } r\geq 0,
\]
which is the second inequality in {\bf (B1)}. This proves what we wanted.

%%%%%%%%%%%%%%%%%%%%%%%%%%%%%%%%%%%%%%%%%%%%%%%%%%%%%%%%%%%%%%%%%%%%%%%%%%
\begin{proof}[Proof of Theorem 3]

We break the rather long proof into 4 steps.

\smallskip

\noindent
{\bf Step 1:} We show
\be\label{s1-s2-1}
\limsup_{t\to\infty}\frac{s_1(t)}{t}\leq  c^{\varepsilon_0}_{\mu_1},\quad
      \liminf_{t\to\infty}\frac{s_2(t)}{t}\geq s^{\varepsilon_0}-\sigma_{\varepsilon_0},
\ee
where
\[
\sigma_\varepsilon:=\frac{N-1}{\xi_0(\varepsilon)} \mbox{ for } \varepsilon\in (0,\varepsilon_0].
\]

By \eqref{ep0}, we have
\bea\label{sigma}
c^{\varepsilon}_{\mu_1}<s^{\varepsilon}-\sigma_{\varepsilon} \mbox{ for } \varepsilon\in (0,\varepsilon_0].
\eea
We prove  \eqref{s1-s2-1} by constructing suitable functions $(\overline U(r,t), \underline V(r,t), l(t), g(t))$ which satisfy certain differential inequalities that enable us to use a comparison argument to relate them to $(u(r,t),v(r,t), s_1(t), s_2(t))$.
Set
\beaa
&&l(t):=c^{\varepsilon_0}_{\mu_1}t+z_0,\quad g(t):=\left(s^{\varepsilon_0}-\sigma_{\varepsilon_0}\right)t+\xi_1+z(s^{{\varepsilon_0}})+z_0,\\
&&\overline{U}(r,t):=U_{{\varepsilon_0}}(r-l(t)) \mbox{ for } r\in[0,l(t)],\ t\geq0,\\
&&\underline{V}(r,t):=\begin{cases}
\widehat{Q}^{\varepsilon_0}(r-l(t)) &\mbox{ for } r\in[0,l(t)+\xi_1],\ t\geq0,\\
\widehat{Q}^{\varepsilon_0}(\xi_1) &\mbox{ for } r\in[l(t)+\xi_1,g(t)-z(s^{{\varepsilon_0}})],\ t\geq0,\\
q_{s^{\varepsilon_0}}(r-g(t)) &\mbox{ for } r\in[g(t)-z(s^{{\varepsilon_0}}), g(t)],\ t\geq0,
\end{cases}
\eeaa
where $\widehat{Q}^{\varepsilon_0}$ is defined in \eqref{Q} and $\xi_1=\xi_1(\varepsilon_0)$ is given in Lemma~\ref{lem:Q+}.
We note that
\bea\label{xi 1 large}
\xi_1(\varepsilon_0)>\xi_0(\varepsilon_0)=\frac{N-1}{\sigma_{\varepsilon_0}}.
\eea

By the assumption {\bf (B1)}, we have
\bea\label{initial cp}
u(r,0)\leq\overline{U}(r,0) \mbox{ for } r\in[0,s^0_{1}];\
v(r,0)\geq \underline{V}(r,0) \mbox{ for } r\in[0,s^0_{2}].
\eea
We now show the wanted differential inequality for $\overline U$:
\bea\label{bar U}
\overline{U}_t-d\overline{U}_{rr}-\frac{N-1}{r}\overline{U}_{r}-r\overline{U}(1-\overline{U}-k\underline{V})\geq0 \mbox{ for } 0\leq r\leq l(t),\ t>0.
\eea
Using $\overline{U}_{r}=U_{\varepsilon_0}'<0$,
direct computation gives us
\bea\label{bar U ineq}
\begin{array}{rl}
J(r,t):=&\overline{U}_t-d\overline{U}_{rr}-\frac{N-1}{r}\overline{U}_{r}-r\overline{U}(1-\overline{U}-k\underline{V})\smallskip\\
\geq& -c^{\varepsilon_0}_{\mu_1} U_{{\varepsilon_0}}'-d U_{{\varepsilon_0}}''-rU_{{\varepsilon_0}}(1-U_{{\varepsilon_0}}-k\underline{V})\smallskip\\
=& rU_{{\varepsilon_0}}(1+{\varepsilon_0}-U_{{\varepsilon_0}}-kV_{{\varepsilon_0}})-rU_{{\varepsilon_0}}(1-U_{{\varepsilon_0}}-k\underline{V})\smallskip\\
=& rU_{{\varepsilon_0}}({\varepsilon_0}-kV_{{\varepsilon_0}}+k\underline{V}).
\end{array}
\eea

When $\overline{U}>0$, we have $r\leq l(t)$ and so we can divide into two cases:
when $r-l(t)\in(\xi_2,0)$, we have
$$\underline{V}(r,t)=\widehat{Q}^{{\varepsilon_0}}_-(r-l(t))=V_{{\varepsilon_0}}(r-l(t)).$$
Hence from \eqref{bar U ineq} we see that $J(r,t)>0$. When $r-l(t)\in(-l(t),\xi_2)$, by Lemma~\ref{lem:Q-} with $\varepsilon=\varepsilon_0$,
we have
$$ kV_{{\varepsilon_0}}(r-l(t))<V_{{\varepsilon_0}}(r-l(t))<V_{{\varepsilon_0}}(\xi_2)<{\varepsilon_0}.$$
Again we obtain from \eqref{bar U ineq} that $J(r,t)>0$. Hence \eqref{bar U} holds.

We next show %that
the wanted differential inequality for $\underline V$:
\bea\label{bar V ineq}
\underline{V}_t-\underline{V}_{rr}-\frac{N-1}{r}\underline{V}_{r}-\underline{V}(1-\underline{V}-h\overline{U})\leq0 \mbox{ for }
 0\leq r\leq g(t),\ t>0.
\eea
We divide the proof into three parts.

(i) For $r\in[0,l(t)+\xi_1]$, using $\underline V_r(r,t)=(\widehat{Q}^{\varepsilon_0})'(r-l(t))\geq0$, Lemma~\ref{lem:Q+} and Lemma~\ref{lem:Q-}
with $\varepsilon=\varepsilon_0$,
\beaa
\underline{V}_t-\underline{V}_{rr}-\frac{N-1}{r}\underline{V}_{r}-\underline{V}(1-\underline{V}-h\overline{U})
\leq-\frac{N-1}{r}\underline V_r\leq0.
\eeaa

(ii) For  $r\in[l(t)+\xi_1, g(t)-z(s^{{\varepsilon_0}})]$, we have $\overline U\equiv 0$ and $\underline{V}\equiv \widehat{Q}^{\varepsilon_0}(\xi_1)<1$.
So clearly
\eqref{bar V ineq} holds.

(iii)
For $r\in(g(t)-z(s^{{\varepsilon_0}}), g(t))$, we observe that $r\geq g(t)-z(s^{{\varepsilon_0}})\geq \xi_1$.
Also, by \eqref{xi 1 large},
we have
\beaa
\sigma_{\varepsilon_0}-\frac{N-1}{r}\geq \sigma_{\varepsilon_0}-\frac{N-1}{\xi_1}>0.
\eeaa
Together with the fact that $(q_{s^{\varepsilon_0}})'(r-g(t))<0$ for $r\in(g(t)-z(s^{{\varepsilon_0}}), g(t))$ and $t>0$, we have
\beaa
&&\underline{V}_t-\underline{V}_{rr}-\frac{N-1}{r}\underline{V}_{r}-\underline{V}(1-\underline{V}-h\overline{U})\\
&=& -g'(t)q_{s^{\varepsilon_0}}'-q_{s^{\varepsilon_0}}''-\frac{N-1}{r}q_{s^{\varepsilon_0}}'
    -q_{s^{\varepsilon_0}}(1-q_{s^{\varepsilon_0}})\\
&=&\left(\sigma_{\varepsilon_0}-\frac{N-1}{r}\right)q_{s^{\varepsilon_0}}'(r-g(t))\\
&\leq& 0.
\eeaa
We have thus proved \eqref{bar V ineq}.

In order to use the comparison principle to compare $(u,v,s_1, s_2)$ with $(\overline U,\underline V, l, g)$,
we note that on the  boundary $r=0$,% we have
\bea\label{left bdry cp}
\overline{U}_r(0,t)<0,\quad \underline{V}_r(0,t)>0 \mbox{ for } t>0.
\eea
Regarding the free boundary conditions, we have
\bea
&&l'(t)=c_{\mu}^{\varepsilon_0}=-\mu_1U_{\varepsilon_0}'(0)=-\mu_1\underline{U}(l(t),t) \mbox{ for } t>0,\label{fbc-l}\\
&&g'(t)=s^{\varepsilon_0}-\sigma_{\varepsilon_0}<s^*_{\mu_2}=-\mu_2 (q_{s}^{\varepsilon_0})'(0)=-\mu_2\underline{V}(g(t),t)
\mbox{ for } t>0.\label{fbc-g}
\eea
By \eqref{initial cp}, \eqref{bar U}, \eqref{bar V ineq}-\eqref{fbc-g},
we can apply the comparison principle (\cite[Lemma 3.1]{Wu2} with minor modifications) to deduce that
\[
s_1(t)\leq l(t),\; u(r,t)\leq \overline U(r,t) \mbox{ for } r\in[0, s_1(t)),\; t>0;
\]
\[
s_2(t)\geq g(t),\; v(r,t)\geq \underline V(r,t) \mbox{ for } r\in [0, g(t)],\; t>0.
\]
In particular,
\[
\limsup_{t\to\infty}\frac{s_1(t)}{t}\leq \lim_{t\to\infty}\frac{l(t)}{t}= c^{\varepsilon_0}_{\mu_1},\quad
      \liminf_{t\to\infty}\frac{s_2(t)}{t}\geq \lim_{t\to\infty}\frac{g(t)}{t}=s^{\varepsilon_0}-\sigma_{\varepsilon_0}.
\]
We have thus proved \eqref{s1-s2-1}.

\bigskip

\noindent{\bf Step 2:}
We refine the definitions of $(\overline U(r,t), \underline V(r,t), l(t), g(t))$ in Step 1
to obtain the improved estimates
 \bea\label{lower estimate}
\limsup_{t\to\infty}\frac{s_1(t)}{t}\leq c^{*}_{\mu_1},\quad \liminf_{t\to\infty}\frac{s_2(t)}{t}\geq s^{*}_{\mu_2}.
\eea

For any given $\varepsilon\in(0,\varepsilon_0)$, we redefine $(l,g,\overline{U},\underline{V})$ as
\beaa
&&l(t)=c^{\varepsilon}_{\mu_1}(t-T_{\varepsilon})+z_1,\quad
      g(t)=(s^{\varepsilon}-\sigma_{\varepsilon})(t-T_{\varepsilon})+\xi_1+z(s^{{\varepsilon}})+z_1,\\
&&\overline{U}(r,t)=U_{\varepsilon}(r-l(t)) \mbox{ for } r\in[0,l(t)],\ t\geq 0,\\
&&\underline{V}(r,t)=\begin{cases}
\widehat{Q}^{\varepsilon}(r-l(t)) & \mbox{ for } r\in[0,l(t)+\xi_1],\ t\geq 0,\\
\widehat{Q}^{\varepsilon}(\xi_1) &\mbox{ for } r\in[l(t)+\xi_1,g(t)-z(s^{\varepsilon})],\ t\geq 0,\\
q_{s^\varepsilon}(r-g(t)) &\mbox{ for } r\in[g(t)-z(s^{\varepsilon}), g(t)],\ t\geq 0,\\
0  &\mbox{ for}  r\in[g(t),\infty),\ t\geq 0,
\end{cases}
\eeaa
where $\xi_1=\xi_1(\varepsilon)$ is given in Lemma~\ref{lem:Q+}
and
$z_1, T_{\varepsilon}\gg1$ are to be determined later.

We want to show that
there exist $z_1\gg1$ and $T_{\varepsilon}\gg1$ such that
\bea\label{T cp}
u(r,T_{\varepsilon})\leq\overline{U}(r,T_{\varepsilon}) \mbox{ for } r\in[0,s_1(T_{\varepsilon})];\
v(r,T_{\varepsilon})\geq\underline{V}(r,T_{\varepsilon}) \mbox{ for } r\in[0,s_2(T_{\varepsilon})].
\eea

Since $\limsup_{t\to\infty}u(r,t)\leq 1$ uniformly in $r$,
  there exists $T_{1,\varepsilon}$ such that
\bea\label{u-behavior}
u(r,t)\leq 1+\varepsilon/2\quad \mbox{for $r\in[0,s_1(t)]$ and $t\geq T_{1,\varepsilon}$}.
\eea
{By %\eqref{ep0}
\eqref{sigma} }
and Lemma~\ref{lem:estimate1}, we can find $0<\nu\ll1$ and then $T_{2,\varepsilon}\gg1$  such that
\bea
&& c_1:=c^{\varepsilon_0}_{\mu_1}+\nu<s^{\varepsilon_0}-\sigma_{\varepsilon_0}-\nu=:c_2,\label{c1-2}\\
&&v(r,t)\geq 1-\varepsilon\quad
           \mbox{for $r\in [c_1 t,c_2 t]$ and $t\geq T_{2,\varepsilon}$}.\label{top}
\eea

We now prove \eqref{T cp} by making use of \eqref{u-behavior} and \eqref{top}.
By the definition of $\underline{V}(r,t)$, we see that
$\|\underline{V}(\cdot,t)\|_{L^{\infty}}<1-\varepsilon$ for all $t>0$. Also, note that
$\underline{V}(\cdot,T_{\varepsilon})=W_{\varepsilon}(\cdot-z_1)$
has  compact support $[\xi_3+z_1,\xi_1+z(s^\varepsilon)+z_1]$, whose length equals to $\xi_1-\xi_3+z(s^{\varepsilon})$ which
 is independent of the choice of $T_{\varepsilon}$.

Next, we show the following claim: there exist $z_1\gg1$ and $T_{\varepsilon}\gg1$ such that
\beaa
[\xi_3+z_1,\xi_1+z(s^\varepsilon)+z_1]\subset[c_1T_\varepsilon,c_2T_\varepsilon].
\eeaa
Since $U_{\varepsilon}(-\infty)=1+\varepsilon$, we can find $T_{3,\varepsilon}\gg 1$ such that
\[
U_{\varepsilon}(r)>1+\frac{\varepsilon}{2} \mbox{ for } r\leq -T_{3,\varepsilon}.
\]
By \eqref{s1-s2-1}, we can find $T_{4,\varepsilon}\gg1$ so that {
\[
s_1(t)\leq (c_{\mu_1}^{\varepsilon_0}+\frac{\nu}{2})t=(c_1-\frac{\nu}{2})t \mbox{ for } t\geq T_{4,\varepsilon}.
\] }
We now take $z_1:=c_1T_\varepsilon-\xi_3$ with  $T_{\varepsilon}>\max\{T_{1,\varepsilon},T_{2,\varepsilon}, T_{4,\varepsilon}\}$
chosen such that
\[
s_1(T_\varepsilon)-c_1T_\varepsilon+\xi_3<-\frac{\nu}{2}T_\varepsilon+\xi_3(\varepsilon)<-T_{3,\varepsilon},
\]
\[
\xi_1+z(s^\varepsilon)+c_1T_\varepsilon-\xi_3<c_2T_\varepsilon.
\]
It follows that
\[
[\xi_3+z_1, \xi_1+z(s^\varepsilon)+z_1]=[c_1T_\varepsilon, \xi_1+z(s^\varepsilon)+c_1T_\varepsilon-\xi_3]\subset[c_1T_\varepsilon,
c_2T_\varepsilon],
\]
and
\[
\overline{U}(r,T_{\varepsilon})=U_\varepsilon(r-z_1)={U_\varepsilon(r-c_1T_\varepsilon+\xi_3)}\geq 1+\frac{\varepsilon}{2}
\mbox{ for } r\leq s_1(T_\epsilon).
\]
Thus we may use \eqref{u-behavior} and \eqref{top} to obtain
\beaa
u(r,T_{\varepsilon})\leq 1+\frac{\varepsilon}{2}\leq\overline{U}(r,T_{\varepsilon}) \mbox{ for } r\in[0,s_1(T_{\varepsilon})],
\eeaa
\beaa
\underline{V}(\cdot,T_{\varepsilon})<{1-\varepsilon}\leq v(\cdot,T_\varepsilon) \mbox{ for } r\in[0, s_2(T_\varepsilon)].
\eeaa
We have thus proved \eqref{T cp}.

It is also easily seen that,
with $t>0$ replaced by $t>T_{\varepsilon}$ and $\varepsilon_0$  replaced by
$\varepsilon$, the inequalities \eqref{bar U} and \eqref{bar V ineq}--\eqref{fbc-g} still hold. Thus we are able to use the comparison principle  as before to deduce
\[
s_1(t)\leq l(t),\; u(r,t)\leq \overline U(r,t) \mbox{ for } r\in[0, s_1(t)),\; t>T_\varepsilon;
\]
\[
s_2(t)\geq g(t),\; v(r,t)\geq \underline V(r,t) \mbox{ for } r\in [0, g(t)],\; t>T_\varepsilon.
\]
In particular,
\beaa
\limsup_{t\to\infty}\frac{s_1(t)}{t}\leq \lim_{t\to\infty}\frac{l(t)}{t}= c^{\varepsilon}_{\mu_1},\quad \liminf_{t\to\infty}\frac{s_2(t)}{t}\geq \lim_{t\to\infty}\frac{g(t)}{t}=s^{\varepsilon}-\sigma_{\varepsilon}.
\eeaa
Since $\varepsilon\in(0,\varepsilon_0)$ is arbitrary, taking $\varepsilon\to 0$ we obtain
 \eqref{lower estimate}.

\bigskip

\noindent{\bf Step 3:} We prove the following conclusions:
\be
\label{s1-u}
\lim_{t\to\infty}\frac{s_1(t)}{t}=c^*_{\mu_1},\; \lim_{t\to\infty} \Big[\max_{r\in[0, (c^*_{\mu_1}-\epsilon)t]} |u(r,t)-1|\Big]= 0.
\ee
\be\label{s2-v}
\lim_{t\to\infty}\frac{s_2(t)}{t}=s^*_{\mu_2},\;
\lim_{t\to\infty}\Big[\max_{r\in[(c^*_{\mu_1}+\epsilon)t, (s^*_{\mu_2}-\epsilon)t]} |v(r,t)-1|\Big]=0.
\ee

We note that for $r\in [l(t)+\xi_1, g(t)-z(s^\varepsilon)]$ and $t>0$,
\begin{align*}
\underline V(r,t)=\widehat{Q}^\varepsilon(\xi_1)&=V_\varepsilon(\xi_1)-\delta(\xi_1-\xi_0)^2V_\varepsilon(\xi_0)&\\
&\geq V_\varepsilon(\xi_0)-\delta V_\varepsilon(\xi_0)=(1-\delta)V_\varepsilon(\xi_0)&\\
&\geq (1-\frac\varepsilon 4)(1-2\varepsilon).&
\end{align*}
Thus for any given $\epsilon>0$ we can choose $\varepsilon^*>0$ small enough so that for all $\varepsilon\in (0, \varepsilon^*]$,
\[
\underline V(r,t)\geq 1-\epsilon \mbox{ for } r\in [l(t)+\xi_1, g(t)-z(s^\varepsilon)],\; t>0.
\]
In view of
\[
\lim_{\varepsilon\to 0} c^\varepsilon_{\mu_1}=c^*_{\mu_1},\; \lim_{\varepsilon\to 0} s^\varepsilon=s^*_{\mu_2},
\]
and the inequality \eqref{sigma}, by further shrinking $\varepsilon^*$ we may also assume that for all $\varepsilon\in (0, \varepsilon^*]$,
\[
c_{\mu_1}^\varepsilon<c_{\mu_1}^*+\frac\epsilon 2,\; s^\varepsilon-\sigma_\varepsilon>s^*_{\mu_2}-\frac\epsilon 2.
\]
Hence for every $\varepsilon\in (0,\varepsilon^*]$ we can find $\tilde T_\varepsilon\geq T_\varepsilon$ such that
\[
[(c^*_{\mu_1}+\epsilon)t, (s^*_{\mu_2}-\epsilon)t]\subset [l(t)+\xi_1, g(t)-z(s^\varepsilon)]\mbox{ for } t\geq \tilde T_\varepsilon.
\]
It follows that
\[
v(r,t)\geq \underline V(r,t)\geq 1-\epsilon \mbox{ for } r\in [(c^*_{\mu_1}+\epsilon)t, (s^*_{\mu_2}-\epsilon)t],\; t\geq \tilde T_\varepsilon,
\;\varepsilon\in (0,\varepsilon^*],
\]
which implies
\be\label{v-lbd}
\liminf_{t\to\infty}\Big[\min_{r\in[(c^*_{\mu_1}+\epsilon)t, (s^*_{\mu_2}-\epsilon)t]} v(r,t)\Big]\geq 1.
\ee

Next we obtain bounds for $(u,v,s_1,s_2)$ from the other side.

By comparison with an ODE upper solution, %{\color{red}(this part moves from p.23 line 7-10 in DuWu-17.)}
\[
\limsup_{t\to\infty} v(r,t)\leq 1 \mbox{ uniformly for } r\in [0,\infty),
\]
which, combined with \eqref{v-lbd}, yields
\[
\lim_{t\to\infty}\Big[\max_{r\in[(c^*_{\mu_1}+\epsilon)t, (s^*_{\mu_2}-\epsilon)t]} |v(r,t)-1|\Big]=0.
\]
This proves the second identity in \eqref{s2-v}.

{As seen in the proof of Lemma~\ref{lem:estimate1}, we have
\[
 \limsup_{t\to\infty}\frac{s_2(t)}{t}\leq s^{*}_{\mu_2}.
\] }
Combining this with \eqref{lower estimate}, we obtain the first identity in \eqref{s2-v}, namely
\[
\lim_{t\to\infty}\frac{s_2(t)}{t}=s^*_{\mu_2}.
\]

 We next prove \eqref{s1-u}. Consider the problem {\bf(Q)} with initial data in \eqref{ic-Q}
chosen the following way: $
\hat{u}_0=u_0,\; h_0=s^0_1$ and  $\hat{v}_0\in C^2([0,\infty))\cap L^{\infty}((0,\infty))$  satisfies
\eqref{inf v0} and
\bea\label{intial v ineq}
\hat{v}_0(r)\geq v_0(r)\quad \mbox{for $r\in[0,s^0_2]$}.
\eea
We denote its unique solution by $(\hat u, \hat v, h)$.
Then by {\bf (B2)} and Theorem 4.4 in \cite{DL2}, we have $h_{\infty}=\infty$.
Moreover, it follows from Theorem B that
\[
\lim_{t\to\infty}\frac{h(t)}{t}=c^*_{\mu_1}.
\]
Due to \eqref{intial v ineq}, we can apply the comparison principle (\cite[Lemma 3.1]{Wu2} with minor modifications) to derive
\beaa
s_1(t)\geq h(t),\quad u(r,t)\geq \hat u(r,t) \mbox{  for $r\in [0, h(t)]$, $t\geq0$},
\eeaa
which in particular implies
\bea\label{estimate-1}
\liminf_{t\to\infty}\frac{s_1(t)}{t}\geq c^{*}_{\mu_1}.
\eea
Moreover, by the definition of $\psi_\delta$ and $\underline h(t)$ in \cite{DWZ}, and the estimate
\[
\hat u(r,t)\geq \psi_\delta(\underline h(t-T)-r) \mbox{ for } t>T, \; r\in [0, \underline h(t-T)],
\]
we easily obtain the following conclusion:

{\it For any given small $\epsilon>0$, there exists $\delta^*>0$ small  such that for every $\delta\in (0, \delta^*]$, there exists $T^*_\delta>0$ large so that}
\[
\hat u(r,t)\geq \psi_\delta(\underline h(t-T)-r)\geq 1-\epsilon \mbox{ for } r\in [0, (c^*_{\mu_1}-\epsilon)t],\; t\geq T^*_\delta.
\]
It follows that
\[
\liminf_{t\to\infty} \Big[\min_{r\in[0, (c^*_{\mu_1}-\epsilon)t]}\hat u(r,t)\Big]\geq 1.
\]
Hence
\[
\liminf_{t\to\infty} \Big[\min_{r\in[0, (c^*_{\mu_1}-\epsilon)t]} u(r,t)\Big]\geq 1.
\]
By comparison with an ODE upper solution, it is easily seen that
\[
\limsup_{t\to\infty}u(r,t)\leq 1 \mbox{ uniformly for } r\in [0,\infty).
\]
We thus obtain, for any small $\epsilon>0$,
\[
\lim_{t\to\infty} \Big[\max_{r\in[0, (c^*_{\mu_1}-\epsilon)t]} |u(r,t)-1|\Big]= 0.
\]
This proves the second identity in \eqref{s1-u}.

Combining \eqref{estimate-1} and \eqref{lower estimate}, we obtain the first identity in \eqref{s1-u}:
\[
\lim_{t\to\infty}\frac{s_1(t)}{t}=c^*_{\mu_1}.
\]
%It remains to obtain the required limiting behavior for $v$ and $s_2$.
%It is easy to check that $(v,s_2)$ forms a subsolution of
%\beaa
%\left\{
% \begin{array}{ll}
% \bar{w}_t=\Delta\bar{w}+\bar{w}(1-\bar{w}),\ 0<r<\bar{\eta}(t),\ t>0,\\
% \bar{w}_r(0,t)=0,\ \bar{w}(\bar{\eta}(t),t)=0,\ t>0,\\
% \bar{\eta}'(t)=-\mu_2 \bar{w}_r(\bar{\eta}(t),t),\ t>0,\\
% \bar{\eta}(0)=s_2^0,\ \bar{w}(r,0)=v_0(r),\ r\in[0,s_2^0],
% \end{array}
%\right.
%\eeaa
%By the comparison principle (Lemma 2.6 of \cite{DG}), $\bar{\eta}(t)\geq s_{2}(t)$ for all $t$, which implies that $\bar{\eta}(\infty)=\infty$.
%It then follows from Corollary~3.7 of \cite{DG} that $\bar{\eta}(t)/t\to s^*_{\mu_2}$ as $t\to\infty$. Consequently, we have
%\[
% \limsup_{t\to\infty}\frac{s_2(t)}{t}\leq \lim_{t\to\infty}\frac{\bar{\eta}(t)}{t}=s^{*}_{\mu_2}.
%\]
%Combining this with \eqref{lower estimate}, we obtain
%\[
%\lim_{t\to\infty}\frac{s_2(t)}{t}=s^*_{\mu_2}.
%\]

%Also, it is easily seen that
%\[
%\limsup_{t\to\infty} v(r,t)\leq 1 \mbox{ uniformly for } r\in [0,\infty),
%\]
%which, combined with \eqref{v-lbd}, yields
%\[
%\lim_{t\to\infty}\Big[\max_{r\in[(c^*_{\mu_1}+\epsilon)t, (s^*_{\mu_2}-\epsilon)t]} |v(r,t)-1|\Big]=0.
%\]
\bigskip

\noindent{\bf Step 4:}
We complete the proof of Theorem~\ref{thm: speed} by finally showing that, for any small $\epsilon>0$,
\be\label{v-to-0}
\lim_{t\to\infty}\Big[\max_{r\in[0, (c^*_{\mu_1}-\epsilon)t]}v(r,t)\Big]=0.
\ee
We prove this by making use of \eqref{s1-u}. Suppose by way of contradiction that \eqref{v-to-0} does not hold.
Then for some $\epsilon_0>0$ small there exist  $\delta_0>0$ and a sequence $\{(r_k, t_k)\}_{k=1}^\infty$ such that
\[
\lim_{k\to\infty}t_k=\infty,\; r_k\in [0, (c^*_{\mu_1}-\epsilon_0)t_k],\; v(r_k, t_k)\geq \delta_0 \mbox{ for all } k\geq 1.
\]
By passing to a subsequence, we have either (i) $r_k\to r^*\in [0, \infty)$ or (ii) $r_k\to\infty$ as $k\to\infty$.

In case (i) we define
\[
v_k(r,t):=v(r, t+t_k),\; u_k(r,t):=u(r, t+t_k) \mbox{ for } k\geq 1.
\]
Then
\[
\partial_t v_k=\Delta v_k+v_k(1-v_k-hu_k) \mbox{ for } r\in [0, s_2(t+t_k)),\; t\geq -t_k.
\]
By \eqref{s1-u}, we have $u_k\to 1$ in $L^\infty_{loc}([0,\infty)\times \mathbb{R}^1)$.
Since $v_k(1-v_k-hu_k)$ has an $L^\infty$ bound that is independent of $k$, by standard parabolic regularity and a compactness
consideration, we may assume, by passing to a subsequence involving a diagonal process, that
\[
v_k(r,t)\to v^*(r,t) \mbox{ in } C_{loc}^{1+\alpha, \frac{1+\alpha}{2}}([0,\infty)\times \mathbb{R}^1),\; \alpha\in (0,1),
\]
and $v^*\in W^{2,1}_{p, loc}([0,\infty)\times \mathbb{R}^1)$ $(p>1)$ is a solution of
\[
\begin{cases}
v^*_t=\Delta v^*+v^*(1-h-v^*) &\mbox{ for } r\in[0,\infty),\; t\in\mathbb{R}^1,\\
 v^*_r(0,t)=0 &\mbox{ for } t\in\mathbb{R}^1.
\end{cases}
\]
Moreover, ${v^*(r^*,0)}\geq \delta_0$ and due to $\limsup_{t\to\infty} v(r,t)\leq 1$ we have $v^*(r,t)\leq 1$.

Fix $R>0$ and let $\hat v(r,t)$ be the unique solution of
\[\begin{cases}
\hat v_t=\Delta \hat v+\hat v(1-h-\hat v) &\mbox{ for } r\in[0, R),\; t>0,\\
 \hat v_r(0,t)=0,\; \hat v(R,t)=1 &\mbox{ for } t>0,\\
\hat v(r,0)=1 &\mbox{ for } r\in [0,R].
\end{cases}
\]
By the comparison principle we have, for any $s>0$,
\[
0\leq v^*(r,t)\leq \hat v(r, t+s) \mbox{ for } r\in [0, R],\; t\geq -s.
\]
On the other hand, by the well known properties of logistic type equations, we have
\[
\hat v(r,t)\to V_R(r) \mbox{ as  $t\to\infty$ uniformly for $r\in [0, R]$},
\]
where $V_R(r)$ is the unique solution to
\[
\Delta V_R+V_R(1-h-V_R) \mbox{ in } [0, R];\; V_R'(0)=0,\; V_R(R)=1.
\]
It follows that
\be\label{v*=0}
\delta_0\leq v^*(r^*, 0)\leq\lim_{s\to\infty} \hat v(r^*, s)=V_R(r^*).
\ee
By Lemma 2.1 in \cite{DM}, we have $V_R\leq V_{R'}$ in $[0, R']$ if $0<R'<R$. Hence
$V_\infty(r):=\lim_{R\to\infty} V_R(r)$ exists, and it is easily seen that $V_\infty$ is a nonnegative solution of
\[
\Delta V_\infty+V_\infty(1-h-V_\infty)=0 \mbox{ in } \mathbb{R}^1.
\]
Since $1-h<0$, by Theorem 2.1 in \cite{DM}, we have $V_\infty\equiv 0$. Hence $\lim_{R\to\infty} V_R(r)=0$ {for every $r\geq 0$}. %$r\geq 1$.
We may now let $R\to\infty$ in \eqref{v*=0} to obtain
$
\delta_0\leq 0$. Thus we reach a contradiction in case (i).

In case (ii), $r_k\to\infty$ as $k\to\infty$, and we define
\[
v_k(r,t):=v(r+r_k, t+t_k),\; u_k(r,t):=u(r+r_k, t+t_k) \mbox{ for } k\geq 1.
\]
Since $r_k\leq (c^*_{\mu_1}-\epsilon_0)t_k$, by \eqref{s1-u}  we see that $u_k(r,t)\to 1$
in $L^\infty_{loc}(\mathbb{R}^1\times \mathbb{R}^1)$.
Then similarly, by passing to a subsequence, $v_k(r,t)\to \tilde v^*(r,t)$ in $C_{loc}^{1+\alpha, \frac{1+\alpha}{2}}(\mathbb{R}^1\times \mathbb{R}^1),\; \alpha\in (0,1)$,
and $\tilde v^*\in W^{2,1}_{p, loc}(\mathbb{R}^1\times \mathbb{R}^1)$ $(p>1)$ is a solution of
\[
\tilde v^*_t=\tilde v^*_{rr}+\tilde v^*(1-h-\tilde v^*) \mbox{ for } (r,t)\in\mathbb{R}^2.
\]
Moreover, $\tilde v^*(0,0)\geq \delta_0$ and $\tilde v^*(r,t)\leq 1$. We may now compare $\tilde v^*$ with the one-dimensional version of $\hat v(r, t)$ used in case (i) to obtain a contradiction. We omit the details as they are just obvious modifications of the arguments in case (i).

As we arrive at a contradiction in both cases (i) and (ii), \eqref{v-to-0} must hold. The proof is now complete.
\end{proof}

\section{Appendix}

This section is divided into three subsections. In subsection 3.1, we establish the local existence and uniqueness of solutions for a rather general system including {\bf (P)} as a special case. In subsection 3.2, we prove the global existence with some additional assumptions on the general system considered in subsection 3.1, but the resulting system is still much more general than {\bf (P)}. In the final subsection, we give the proof of Theorem 2.
\subsection{ Local existence and uniqueness}
\setcounter{equation}{0}

In this subsection, for possible future applications, we show the local existence and uniqueness of the solution to a more general system than  {\bf(P)}.
Our approach follows that in \cite{GW2} with
suitable changes, and in particular, we will fill in a  gap in the argument of \cite{GW2}.

More precisely,
we consider the following  problem:
\bea\label{General FBP}
\left\{
 \begin{array}{ll}
\vspace{2mm}
u_t=d_1\Delta u+f(r,t,u,v) \mbox{ for } 0<r<{s}_1(t),\ t>0,\\
\vspace{2mm}
v_t=d_2\Delta v+g(r,t,u,v) \mbox{ for } 0<r<{s}_2(t),\ t>0,\\
\vspace{2mm}
u_r(0,t)=v_r(0,t)=0 \mbox{ for } t>0,\\
\vspace{2mm}
u\equiv0 \mbox{ for }  r\geq {s}_1(t)\ \mbox{and}\ t>0;\;  v\equiv0 \mbox{ for }  r\geq {s}_2(t)\ \mbox{and}\ t>0,\\
\vspace{2mm}
s_{1}'(t)=-\mu_1 u_r(s_1(t),t),\;   s_{2}'(t)=-\mu_2 v_r(s_2(t),t) \mbox{ for } t>0,\\
\vspace{2mm}
({s}_1(0),{s}_2(0))=(s_1^0,s_2^0),\; (u,v)(r,0)=(u_0,v_0)(r)\ \mbox{for}\ r\in[0,\infty),
\end{array}
\right.
\eea
where $r=|x|$, $\Delta \varphi:=\varphi_{rr}+\frac{(N-1)}{r}\phi_r$, and
the initial data satisfies \eqref{ic}. We  assume that the nonlinear terms $f$ and $g$ satisfy
\[\mbox{
\bf(H1):} \hspace{1cm}\left\{\begin{array}{l} \mbox{(i) $f$ and $g$ are continuous in $r,t,u,v\in[0,\infty)$,}\smallskip\\
\mbox{(ii) $f(r,t, 0,v)=0=g(r,t, u,0)$ for $r,t, u,v\ge 0$,}\smallskip\\
\mbox{(iii) $f$ and $g$ are locally Lipschitz continuous in $r,u,v\in[0,\infty)$, }\\
\mbox{\;\;\;\;\; \; uniformly for $t$ in bounded subsets of $[0,\infty)$.}
\end{array}\right.
\]

We have the following local existence and uniqueness result for \eqref{General FBP}.
\begin{thm}\label{thm:local existence}
Assume {\bf(H1)} holds and $\alpha\in(0,1)$.
Suppose for some $M>0$,
\beaa\label{initial condition}
\|u_0\|_{C^2([0,s_1^0])}+\|v_0\|_{C^2([0,s_2^0])}+ s_1^0+s_2^0\leq M.
\eeaa
Then there exist $T\in(0,1)$ and $\widehat{M}>0$
depending only on $\alpha$, $M$ and the local Lipschitz constants of $f$ and $g$
such that  problem \eqref{General FBP}
has a unique solution
\beaa
(u,v,s_1,s_2)\in C^{1+\alpha,(1+\alpha)/2}({D^1_{T}})\times C^{1+\alpha,(1+\alpha)/2}({D^2_{T}})
\times C^{1+\alpha/2}([0,{T}]) \times C^{1+\alpha/2}([0,{T}])
\eeaa
satisfying
\bea\label{solution-estimate}
\|u\|_{C^{1+\alpha,(1+\alpha)/2}(D^1_T)}+\|v\|_{C^{1+\alpha,(1+\alpha)/2}(D^2_T)}
+\sum_{i=1}^2\|s_i\|_{C^{1+\alpha/2}([0,T])}\leq \widehat{M},
\eea
where $D^i_{T}:=\{(x,t):0\leq x \leq s_i(t),\ 0\leq t\leq T\}$ for $i=1,2$.
\end{thm}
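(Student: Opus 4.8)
The plan is to reduce the free boundary problem \eqref{General FBP} to a fixed point problem on a suitable Banach space by straightening the two moving boundaries and applying the contraction mapping principle. First I would introduce a change of variables that flattens $r=s_i(t)$: for each unknown free boundary pair $(s_1,s_2)$ in an appropriate closed subset $\Sigma_T$ of $C^{1+\alpha/2}([0,T])^2$ (those $s_i$ agreeing with the initial data to first order, with $s_i(0)=s_i^0$, $s_i'(0)=-\mu_i (u_0)_r(s_i^0)$ or $-\mu_i (v_0)_r(s_i^0)$, and satisfying a uniform $C^{1+\alpha/2}$-bound $2\widehat M$), set $y=r s_i^0/s_i(t)$ so that the interval $[0,s_i(t)]$ becomes the fixed interval $[0,s_i^0]$. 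Under this substitution $u$ and $v$ solve parabolic equations on fixed domains with coefficients depending on $s_i,s_i'$; because $s_i\in C^{1+\alpha/2}$, these coefficients lie in the parabolic Hölder class $C^{\alpha,\alpha/2}$ needed for Schauder theory. Here one must be careful that the two boundaries are independent, so the transformed equation for $u$ uses the $y$-variable adapted to $s_1$ while that for $v$ uses the one adapted to $s_2$; this is exactly the point where the argument of \cite{GW2} needs the correction mentioned in the text, and I would handle it by keeping the two straightening maps genuinely separate rather than trying to use a single coordinate.

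Next, given $(s_1,s_2)\in\Sigma_T$, I would solve the two (now fixed-domain) initial-boundary value problems for $u$ and $v$ by standard $L^p$ and Schauder estimates for linear parabolic equations with Neumann condition at $y=0$ and Dirichlet condition $u=0$ (resp. $v=0$) at $y=s_i^0$; using {\bf (H1)}(ii) one checks the compatibility conditions at the corners are met, so one obtains $u\in C^{1+\alpha,(1+\alpha)/2}$, $v\in C^{1+\alpha,(1+\alpha)/2}$ with a bound depending only on $M$, $\alpha$, $T$ and the local Lipschitz data of $f,g$ — in particular the $C^{1+\alpha,(1+\alpha)/2}$ norm can be bounded by a constant $\widehat M$ independent of $T\in(0,1)$ after absorbing the nonlinear terms via the a priori $L^\infty$ bounds coming from {\bf (H1)}(ii) and the maximum principle. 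One then defines a map $\mathcal F:(s_1,s_2)\mapsto(\tilde s_1,\tilde s_2)$ by integrating the Stefan conditions,
\[
\tilde s_i(t)=s_i^0-\mu_i\int_0^t (\text{appropriate normal derivative of }u\text{ or }v)(s_i^0,\tau)\,d\tau,
\]
where the spatial derivative at the straightened boundary is converted back via the chain rule. The Hölder regularity of $u_y,v_y$ up to the boundary gives $\tilde s_i\in C^{1+\alpha/2}([0,T])$.

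The core of the argument is then twofold: (a) show $\mathcal F$ maps $\Sigma_T$ into itself for $T$ small, and (b) show $\mathcal F$ is a contraction on $\Sigma_T$ in, say, the $C^{1}([0,T])^2$ norm (a weaker norm than the one defining $\Sigma_T$, which is the standard device — one contracts in a weaker norm while the stronger bound is preserved by a compactness/closedness argument, or one contracts directly in $C^{1+\alpha/2}$ by exploiting the factor $T^{\alpha'}$ gained from Hölder interpolation). For (a) one uses that $\tilde s_i(0)=s_i^0$, $\tilde s_i'(0)$ matches the prescribed value by construction, and $\|\tilde s_i-s_i^0\|_{C^{1+\alpha/2}}\le C T^{\theta}$ for some $\theta>0$ because the integrand is Hölder in $t$ and vanishes appropriately at $t=0$; choosing $T$ small makes this $\le\widehat M$. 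For (b), given two data $(s_1,s_2)$ and $(\bar s_1,\bar s_2)$, the corresponding $(u,v)$ and $(\bar u,\bar v)$ satisfy linear parabolic equations whose coefficient differences and source differences are controlled by $\|s_i-\bar s_i\|_{C^{1+\alpha/2}}$ and, via the Lipschitz hypothesis {\bf (H1)}(iii), by $\|u-\bar u\|,\|v-\bar v\|$; parabolic estimates plus Gronwall give $\|u-\bar u\|+\|v-\bar v\|\le C T^{\theta}(\|s_1-\bar s_1\|+\|s_2-\bar s_2\|)$, and then differentiating the Stefan integral yields $\|\mathcal F(s)-\mathcal F(\bar s)\|\le C T^{\theta}\|s-\bar s\|$, a contraction once $T$ is small. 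The unique fixed point, pulled back through the coordinate change, is the desired solution, and \eqref{solution-estimate} holds with the $\widehat M$ produced above.

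The main obstacle — and the place where I expect to have to be most careful, and where \cite{GW2} apparently has a gap — is the interaction of the \emph{two} independent free boundaries in the contraction estimate: when estimating the difference $v-\bar v$ on its natural domain $[0,s_2(t)]$ versus $[0,\bar s_2(t)]$, the straightening maps are different, and the term $h\overline U$-type coupling (here $f$ depends on $v$ and $g$ on $u$) forces one to compare $u$ on $s_1$-domain against $u$ on $\bar s_1$-domain inside the equation for $v$ which lives on the $s_2$-domain. One must extend $u$ by zero beyond $s_1(t)$ (as the paper already does) and verify that this extension does not destroy the needed Hölder regularity of the source term in the $v$-equation — it does not, precisely because $u(s_1(t),t)=0$ so the extension is continuous, and $f(r,t,0,v)=0$ by {\bf (H1)}(ii) ensures the source term $f$ evaluated with this extended $u$ remains Lipschitz in the relevant variables up to and across $r=s_1(t)$. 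Making this compatibility between the two boundary-adapted coordinate systems precise, so that all the parabolic estimates close uniformly, is the technical heart of the proof; everything else is standard Schauder/$L^p$ parabolic theory combined with the Banach fixed point theorem.
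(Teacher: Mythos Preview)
Your outline is in the right spirit but differs from the paper in two substantive ways.

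First, the structure: the paper uses a \emph{nested} two-layer contraction. For fixed $(\hat s_1,\hat s_2)\in\Sigma_T^1\times\Sigma_T^2$ (taken only in $C^1$, not $C^{1+\alpha/2}$), it first sets up an inner map $\mathcal G:(\hat u,\hat v)\mapsto(u,v)$ on $X_T^1\times X_T^2$ by solving the \emph{linear} problems with source $f(r,t,\hat u,\hat v)$, $g(r,t,\hat u,\hat v)$ via $L^p$ theory plus Sobolev embedding (not Schauder), and shows $\mathcal G$ is a contraction in $L^\infty$ for small $T$. Only then, with the fixed point $(\hat u,\hat v)$ of $\mathcal G$ in hand, does it define the outer map $\mathcal F:(\hat s_1,\hat s_2)\mapsto(\bar s_1,\bar s_2)$ via the Stefan conditions and contract that in $C^1([0,T])$. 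Your proposal collapses these into one step, saying you would ``solve the two fixed-domain problems for $u$ and $v$ by standard estimates''; but those problems are coupled and nonlinear, so you cannot simply apply linear estimates---you need the inner fixed point (or an equivalent device) that you have not spelled out.

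Second, the technical heart---the gap in \cite{GW2}---is not quite what you describe. Your diagnosis (``verify the zero-extension does not destroy H\"older regularity of the source'') is off target: since the paper uses $L^p$ estimates, only $L^\infty$ bounds on the source are needed, and the continuous extension by zero causes no trouble there. The actual difficulty arises in the contraction estimate for $\mathcal F$: to control $P=U^s-U^\sigma$ one must bound the source difference $F$, which in turn requires bounding $Q(R,t)=\hat v^s(R\hat s_1(t),t)-\hat v^\sigma(R\hat\sigma_1(t),t)$. Here $\hat v^s$ lives on $[0,\hat s_2(t)]$ but is being evaluated at a point scaled by $\hat s_1$, not $\hat s_2$; depending on whether $R\hat s_1(t)/\hat s_2(t)$ and $R\hat\sigma_1(t)/\hat\sigma_2(t)$ lie above or below $1$, one lands in five separate cases. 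The paper introduces the ratios $\eta(t)=\hat s_1/\hat s_2$, $\xi(t)=\hat\sigma_1/\hat\sigma_2$ and, using the $C^{1+\alpha}$ bound on $\hat v^s,\hat v^\sigma$ together with an auxiliary estimate on the common domain (Lemma~2.2 of \cite{GW2}), obtains $|Q|\le\tilde C\sum_i\|\hat s_i-\hat\sigma_i\|_{C([0,T])}$ in every case. This case analysis, not a regularity-of-extension argument, is what fills the gap.
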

\begin{proof}
Firstly, for given $T\in(0,1)$, we introduce the function spaces
\beaa
\Sigma_T^i:=\big\{s\in C^1([0,T]):\, s(0)=s_i^0,\ s'(0)=s^*_i,\ 0 \leq s'(t) \leq s^*_i+1,\ t\in[0,T]\big\},\quad i=1,2,
\eeaa
where
\[
\mbox{$s^*_1:=-\mu_1 u_0'(s_1^0)$,\; $s^*_2:=-\mu_2 v_0'(s_2^0)$.}
\]
Clearly $s(t)\geq s_i^0$ for $t\in[0,T]$ if $s\in\Sigma_T^i$.

For given $(\hat{s}_1,\hat{s}_2)\in\Sigma_T^1\times\Sigma_T^2$, we introduce two corresponding function spaces
\beaa
&&X^1_{T}=X_T^1(\hat s_1,\hat s_2):=\big\{u\in C([0,\infty)\times[0,T]): u\equiv0\ \mbox{for}\   r\geq \hat s_1(t),\ t\in[0,T],\\
&&\hspace{7cm} u(r,0)\equiv u_0(r),\; \|u-u_0\|_{L^\infty([0,\infty)\times[0,T])}\leq1\big\};\\
&&X^2_{T}=X_T^2(\hat s_1,\hat s_2):=\big\{v\in C([0,\infty)\times[0,T]): \ v\equiv0\ \mbox{for}\   r\geq \hat s_2(t),\ t\in[0,T],\\
&&\hspace{7cm} v(r,0)\equiv v_0(r),\;\|v-v_0\|_{L^\infty([0,\infty)\times[0,T])}\leq1\big\}.
\eeaa
We note that $X^1_T$ and $X^2_T$ are closed subsets of $C([0,\infty)\times [0,T])$ under the $L^\infty([0,\infty)\times [0,T])$ norm.

Given  $(\hat{s}_1,\hat{s}_2)\in\Sigma_T^1\times\Sigma_T^2$ and $(\hat u, \hat v)\in X_T^1\times X_T^2$, we consider the following problem
\bea\label{FBP-s hat}
\left\{
 \begin{array}{ll}
\vspace{2mm}
u_t=d_1\Delta u+f(r,t,\hat u,\hat v) \mbox{ for } 0<x<\hat{s}_1(t),\ 0<t<T,\\
\vspace{2mm}
v_t=d_2\Delta v+g(r,t,\hat u,\hat v) \mbox{ for }  0<x<\hat{s}_2(t),\ 0<t<T,\\
\vspace{2mm}
u_r(0,t)=v_r(0,t)=0 \mbox{ for } 0<t<T,\\
\vspace{2mm}
u\equiv0\;\mbox{for}\  r\geq \hat{s}_1(t)\ \mbox{and}\ t>0;\;  v\equiv0\;\mbox{for}\   r\geq \hat{s}_2(t)\ \mbox{and}\ t>0,\\
\vspace{2mm}
(\hat{s}_1,\hat{s}_2)(0)=(s_1^0,s_2^0),\ (u,v)(r,0)=(u_0,v_0)(r)\ \mbox{for}\ r\in[0,\infty).
 \end{array}
\right.
\eea

To solve \eqref{FBP-s hat} for $u$,
we straighten the boundary $r=\hat{s}_1(t)$ by the transformation $R:={r}/{\hat{s}_1(t)}$ and define
\beaa
U(R,t):=u(r,t),\quad V(R,t):=v(r,t),\quad \hat U(R,t):=\hat u(r,t),\quad \hat V(R,t):=\hat v(r,t).
\eeaa
Then $U$ satisfies
\bea\label{straighten U}
\left\{
 \begin{array}{ll}
\vspace{1mm}
\dps U_t=\frac{d_1\Delta U}{(\hat{s}_1(t))^2}+\frac{\hat{s}'_1(t)R}{\hat{s}_1(t)}U_R+\tilde f(R,t) & \mbox{ for } R\in (0,1),\, t\in (0,T),\\
\vspace{1mm}
\dps U_R(0,t)=U(1,t)=0& \mbox{ for } t\in (0,T),\\
\vspace{1mm}
\dps U(R,0)=U^0(R):=u_0(s_1^0R)& \mbox{ for } R\in[0, 1],
 \end{array}
\right.
\eea
where
\beaa\label{laplace}
\Delta U:=U_{RR}+\frac{N-1}{R}U_R,\;  \tilde f(R,t):=f(\hat s_1(t)R,t,\hat{U},\hat{V}).
\eeaa

Since
\beaa
&&s_1^0\leq \hat{s}_1(t)\leq s_1^0+s_1^*+1 \mbox{ for } t\in[0,T],  \mbox{ and }\\
&&\big\|{\hat{s}'_1}/{\hat{s}_1}\big\|_{L^{\infty}([0,T])}+\|\tilde f\|_{L^\infty([0,\infty)\times[0,T])}<\infty,
\eeaa
one can apply the standard parabolic $L^p$ theory and the Sobolev embedding theorem (see \cite{Hu,LSU})
to deduce that  \eqref{straighten U}
has a unique solution
$
U\in {C^{1+\alpha,(1+\alpha)/2}([0,1]\times[0,T])}$ with
\[
\|{U}\|_{C^{1+\alpha,(1+\alpha)/2}([0,1]\times[0,T])}\leq C_1(\|\tilde f\|_\infty+\|u_0\|_{C^2})
\]
for some $C_1$ depending only on $\alpha\in (0,1)$ and $M$. It follows that $u(r,t)=U(\frac{r}{\hat s_1(t)}, t)$
satisfies
\bea\label{u-fixed domain}
\|{u}\|_{C^{1+\alpha,(1+\alpha)/2}(D^1_T)}\leq \tilde C_1 (\|\tilde f\|_\infty+\|u_0\|_{C^2})
\eea
where $\tilde C_1$ depends only on $\alpha$ and $M$, and
\[
D^1_T:=\{(r,t): r\in [0, \hat s_1(t)),\; t\in [0,T]\}.
\]

Similarly we can solve \eqref{FBP-s hat} to find a unique $v\in C^{1+\alpha,(1+\alpha)/2}(D_T^2)$ satisfying
\bea\label{v-fixed domain}
\|{v}\|_{C^{1+\alpha,(1+\alpha)/2}(D^2_T)}\leq \tilde C_2 (\|\tilde g\|_\infty+\|v_0\|_{C^2}),
\eea
where $\tilde C_2$ depends only on $\alpha$ and $M$, and
\beaa
&&{\tilde g(R,t)=g(\hat{s}_2(t)R,t,\hat{U},\hat{V}),}\\
&&D^2_T:=\{(r,t): r\in [0, \hat s_2(t)),\; t\in [0,T]\}.
\eeaa

We now define a mapping $\mathcal{G}$ over $X^1_{T}\times X^2_{T}$ by
\[
\mathcal{G}(\hat{u},\hat{v}):=(u,v),
\]
and show that $\mathcal{G}$ has a unique fixed point in $X^1_{T}\times X^2_{T}$ as long as $T\in(0,1)$ is sufficiently small, by using the contraction mapping theorem.

For $R\in [0,1]$ and $t\in [0,T]$,
\beaa
|U(R,t)-U(R,0)|&\leq &T^{(1+\alpha)/2}\|U\|_{C^{1+\alpha, (1+\alpha)/2}([0,1]\times [0,T])}\\
&\leq& C_1T^{(1+\alpha)/2}(\|\tilde f\|_\infty+\|u_0\|_{C^2}).
\eeaa
It follows that
\beaa
\|u-u_0\|_{L^\infty([0,\infty)\times [0,T])}&=&\|U-U^0\|_{C([0,1]\times [0,T])}\\
&\leq & C_1T^{(1+\alpha)/2}(\|\tilde f\|_\infty+\|u_0\|_{C^2}).
\eeaa
Similarly,
\beaa
\|v-v_0\|_{L^\infty([0,\infty)\times [0,T])}\leq  C_2 T^{(1+\alpha)/2}(\|\tilde g\|_\infty+\|v_0\|_{C^2}).
\eeaa
This implies that $\mathcal{G}$ maps $X^1_{T}\times X^2_{T}$ into itself for small $T\in(0,1)$.

To see that $\mathcal{G}$ is a contraction mapping, we choose any $(\hat{u}_i,\hat{v}_i)\in X_T^1\times X_T^2$, $i=1,2$,
and set
\beaa
\tilde{u}:=\hat u_1-\hat u_2,\quad \tilde{v}:=\hat v_1-\hat v_2.
\eeaa
Then $(\tilde{u},\tilde{v})$ satisfies
\beaa\label{contraction-UV}
\left\{
 \begin{array}{ll}
\dps \tilde{u}_t= d_1\Delta \tilde{u}
+f(r,t,\hat{u}_1,\hat{v}_1)-f(r,t,\hat{u}_2,\hat{v}_2)\mbox{ for }  0<r<\hat s_1(t),\ 0<t<T,\\
\dps \tilde{v}_t=d_2\Delta\tilde{v}
+g(r,t,\hat{u}_1,\hat{v}_1)-g(r,t,\hat{u}_2,\hat{v}_2) \mbox{ for } 0<r<\hat s_2(t),\ 0<t<T,\\
\dps \tilde{u}_r(0,t)=\tilde{v}_r(0,t)=0 \mbox{ for } 0<t<T,\\
\dps \tilde{u}\equiv0\quad\mbox{for }\  r\geq \hat s_1(t),\ 0<t<T;\quad  \tilde{v}\equiv0\quad\mbox{for}\   r\geq \hat s_2(t),\ 0<t<T,\\
\dps  (\tilde{u},\tilde{v})(R,0)=(0,0),\ r\in[0,\infty).
 \end{array}
\right.
\eeaa
By the Lipschitz continuity of $f$ and $g$, there exists $C_0>0$ such that for $r\in [0,\max\{\hat s_1(T),\hat\sigma_1(T)\}]$ and $t\in [0, T]$,
\[
|f(r,t,\hat{u}_1,\hat{v}_1)-f(r,t,\hat{u}_2,\hat{v}_2)|\leq C_0(|\hat u_1-\hat u_2|+|\hat v_1-\hat v_2|),
\]
\[
|g(r,t,\hat{u}_1,\hat{v}_1)-g(r,t,\hat{u}_2,\hat{v}_2)|\leq C_0(|\hat u_1-\hat u_2|+|\hat v_1-\hat v_2|).
\]
We may then repeat the arguments leading to \eqref{u-fixed domain} and \eqref{v-fixed domain} to obtain
\beaa
&&\|\tilde{u}\|_{C^{1+\alpha,(1+\alpha)/2}(D^1_T)}+\|\tilde{v}\|_{C^{1+\alpha,(1+\alpha)/2}(D^2_T)}\\
 && \leq C_2(\|\hat{u}_1-\hat{u}_2\|_{L^\infty([0,\infty)\times[0,T])}+\|\hat{v}_1-\hat{v}_2\|_{L^\infty([0,\infty)\times[0,T])}),
\eeaa
for some $C_{2}=C_2(\alpha, M, C_0)$.

If we define
\[
\tilde U(R, t):=\tilde u(\hat s_1(t)R, t),\; \tilde V(R, t):=\tilde v(\hat s_2(t)R, t),
\]
then
\beaa
&&\|\tilde{U}\|_{C^{1+\alpha,(1+\alpha)/2}([0,1]\times [0,T])}+\|\tilde{V}\|_{C^{1+\alpha,(1+\alpha)/2}([0,1]\times [0,T])}\\
 && \leq C\left(
\|\tilde{u}\|_{C^{1+\alpha,(1+\alpha)/2}(D^1_T)}+\|\tilde{v}\|_{C^{1+\alpha,(1+\alpha)/2}(D^2_T)}\right)
\eeaa
for some $C=C(M)$.
Hence from the above estimate for $\tilde u$ and $\tilde v$ we obtain
\beaa
&&\|\tilde{U}\|_{C^{1+\alpha,(1+\alpha)/2}([0,1]\times [0,T])}+\|\tilde{V}\|_{C^{1+\alpha,(1+\alpha)/2}([0,1]\times [0,T])}\\
 && \leq C'_2(\|\hat{u}_1-\hat{u}_2\|_{L^\infty([0,\infty)\times[0,T])}+\|\hat{v}_1-\hat{v}_2\|_{L^\infty([0,\infty)\times[0,T])}),
\eeaa
for some $C'_{2}=C_2'(\alpha, M, C_0)$. Since $\tilde U(R,0)=\tilde V(R,0)\equiv 0$, it follows that
\beaa
&&\|\tilde{U}\|_{C([0,1]\times [0,T])}+\|\tilde{V}\|_{C([0,1]\times [0,T])}\\
 && \leq C_2' T^{(1+\alpha)/2}(\|\hat{u}_1-\hat{u}_2\|_{L^\infty([0,\infty)\times[0,T])}+\|\hat{v}_1-\hat{v}_2\|_{L^\infty([0,\infty)\times[0,T])}),
\eeaa
and hence
\beaa
&&\|\tilde{u}\|_{L^\infty([0,\infty)\times[0,T])}+\|\tilde{v}\|_{L^\infty([0,\infty)\times[0,T])}\\
 && \leq C_2'T^{(1+\alpha)/2}(\|\hat{u}_1-\hat{u}_2\|_{L^\infty([0,\infty)\times[0,T])}+\|\hat{v}_1-\hat{v}_2\|_{L^\infty([0,\infty)\times[0,T])}).
\eeaa
This implies that $\mathcal{G}$ is a contraction mapping
as long as $T\in(0,1)$ is sufficiently small.
By the contraction mapping theorem, $\mathcal{G}$ has a unique fixed point in $X_T^1\times X_T^2$,  which we denote by $(\hat u,\hat v)$.
Furthermore,  from \eqref{u-fixed domain} and  \eqref{v-fixed domain}, we have
\bea\label{hat-u-v-estimate}
\|\hat{u}\|_{C^{1+\alpha,(1+\alpha)/2}(D^1_T)}+\|\hat{v}\|_{C^{1+\alpha,(1+\alpha)/2}(D^2_T)}\leq \widehat{C}_1,
\eea
for some $\widehat{C}_1=\widehat C_1(\alpha, M, C_0)$.

For such $(\hat{u},\hat{v})$,
we introduce the mapping
\[
\mathcal{F}(\hat{s}_1,\hat{s}_2)=\mathcal{F}(\hat{s}_1,\hat{s}_2; \hat u,\hat v):=  (\bar{s}_1,\bar{s}_2)
\]
 with
\beaa
&&\bar{s}_1(t)=s_1^0-\mu_1\int_0^t {\hat{u}_r}(\hat{s}_1(\tau),\tau)d\tau,\quad t\in[0,T];\label{F-map-1}\\
&&\bar{s}_2(t)=s_2^0-\mu_2\int_0^t {\hat{v}_r}(\hat{s}_2(\tau),\tau)d\tau,\quad t\in[0,T].\label{F-map-2}
\eeaa
Clearly
\be\label{bar-s1-s2}
\bar s_1'(t)=-\mu_1\hat u_r(\hat s_1(t),t)\geq 0,\; \bar s_2'(t)=-\mu_2\hat v_r(\hat s_2(t),t)\geq 0 \mbox{ for } t\in [0,T].
\ee

We shall again apply the contraction mapping theorem to deduce that $\mathcal{F}$ defined on $\Sigma_T^1\times \Sigma_T^2$ has a unique fixed point.
By \eqref{hat-u-v-estimate} and \eqref{bar-s1-s2}, we see that $\bar{s}'_i\in C^{{\alpha}/{2}}([0,T])$ with
\bea\label{estimate-moving bdry}
\sum_{i=1}^{2}\|\bar{s}'_i\|_{C^{{\alpha}/{2}}([0,T])}\leq (\mu_1+\mu_2)\widehat{C}_1.
\eea
It follows that
\beaa
\sum_{i=1}^{2}\|\bar{s}'_i-s^*_i\|_{C([0,T])}\leq (\mu_1+\mu_2)\widehat{C}_1T^{{\alpha}/{2}}.
\eeaa
Hence $\mathcal{F}$ maps $\Sigma_T^1\times \Sigma_T^2$ into itself as long as $T\in(0,1)$ is sufficiently small.

To show that $\mathcal{F}$ is a contraction mapping, we let $(\hat{u}^{s},\hat{v}^{s})$ and $(\hat{u}^{\sigma},\hat{v}^{\sigma})$
be two fixed points of $\mathcal{G}$
associated with  $(\hat{s}_1,\hat{s}_2)$ and $(\hat{\sigma}_1,\hat{\sigma}_2)\in\Sigma_T^1\times\Sigma_T^2$,
respectively; and for $i=1,2$, we denote $D_T^i$ associated to  $(\hat{s}_1,\hat{s}_2)$ and $(\hat{\sigma}_1,\hat{\sigma}_2)$ by, respectively
\[
D_{T,s}^i \mbox{ and } D_{T,\sigma}^i.
\]

Let us straighten $r=\hat{s}_1(t)$ and $r=\hat{\sigma}_1(t)$, respectively. To do so for $r=\hat s_1(t)$, we define
\beaa
U^{s}(R,t):=\hat{u}^{s}(r,t), \quad V^{s}(R,t):=\hat{v}^{s}(r,t),\quad R=\frac{r}{\hat{s}_1(t)};
\eeaa
then $U^{s}$ satisfies
\bea\label{straighten Us}
\left\{
 \begin{array}{ll}
\vspace{1mm}
\dps U^s_t=\frac{d_1\Delta U^s}{(\hat{s}_1(t))^2}+\frac{\hat{s}'_1(t)R}{\hat{s}_1(t)}U^s_R+\tilde f^s(R,t) & \mbox{ for } R\in (0,1),\, t\in (0,T),\\
\vspace{1mm}
\dps U^s_R(0,t)=U^s(1,t)=0 &\mbox{ for } t\in (0,T),\\
\vspace{1mm}
\dps U^s(R,0)=u_0(s_1^0R)& \mbox{ for } R\in[0, 1],
 \end{array}
\right.
\eea
where
\[
\tilde f^s(R,t):=f(\hat s_1(t)R,t, U^s, V^s).
\]

Similarly we set
\beaa
U^{\sigma}(R,t):=\hat{u}^{\sigma}(r,t), \quad V^{\sigma}(R,t):=\hat{v}^{\sigma}(r,t),\quad R=\frac{r}{\hat{\sigma}_1(t)},
\eeaa
and find that \eqref{straighten Us} holds with $(U^s, V^s, \hat{s}_1(t))$  replaced by
$(U^\sigma, V^\sigma,\hat{\sigma}_1(t))$ everywhere.

Next we introduce
\beaa
&& \eta(t):=\hat s_1(t)/\hat s_2(t),\; \xi(t):=\hat\sigma_1(t)/\hat\sigma_2(t),\\
%&&\ell_-(t):=\min\{\eta(t),\xi(t)\},\ \ell_+(t):=\max\{\eta(t),\xi(t)\},\ i=1,2,\\
&&P(R,t):=U^{s}(R,t)-U^{\sigma}(R,t),\quad Q(R,t):=V^{s}(R,t)-V^{\sigma}(R,t).
\eeaa
By some simple computations, $P$ satisfies
\be\label{P}
\left\{
 \begin{array}{ll}
 \vspace{2mm}
\dps P_t=\frac{d_1\Delta P}{(\hat{s}_1(t))^2}+\frac{\hat{s}'_1(t)R P_{R}}{\hat{s}_1(t)}+d_1B_1(t)U^{\sigma}_{RR}+R B_2(t)U^{\sigma}_{R}
+F(R,t)\vspace{-3mm}\\
\hspace{7cm} \mbox{ for } R\in [0,1],\, t\in[0,T],\\
\vspace{2mm}
\dps P_R(0,t)=P(1,t)=0 \mbox{ for } t\in[0,T],\\
\vspace{2mm}
\dps P(R,0)=0 \mbox{ for } R\in[0, 1],
 \end{array}
\right.
\ee
where
\beaa
&&B_1(t):=\frac{1}{(\hat{s}_1(t))^2}-\frac{1}{(\hat{\sigma}_1(t))^2},\quad B_2(t):=\frac{\hat{s}_1'(t)}{\hat{s}_1(t)}-\frac{\hat{\sigma}_1'(t)}{\hat{\sigma}_1(t)},\\
&&F(R,t):=f(R\hat{s}_1(t),t, U^{s},V^{s})-f(R\hat{\sigma}_1(t),t, U^\sigma,V^\sigma).
\eeaa

In view of \eqref{bar-s1-s2},
\beaa
\bar{s}'_1(t)=-\mu_1\frac{U^s_R(1,t)}{\hat{s}_1(t)},\quad \bar{\sigma}'_1(t)=-\mu_1\frac{U^{\sigma}_R(1,t)}{\hat{\sigma}_1(t)},
\eeaa
and hence
\beaa
\bar{s}'_1(t)-\bar{\sigma}'_1(t)
=\frac{\mu_1}{\hat s_1(t)}\big[U_R^\sigma(1,t)-U_R^s(1,t)\big] +\frac{\mu_1 U_R^\sigma(1,t)}{\hat s_1(t)\hat\sigma_1(t)}
\big[\hat s_1(t)-\hat \sigma_1(t)\big].
\eeaa

From now on, we will depart from the approach of \cite{GW2} and fill in a gap which occurs in the argument there towards the proof
 that $\mathcal{F}$ is a contraction mapping.

It follows from the above identity that
\beaa
\|\bar s'_1-\bar\sigma'_1\|_{C^{\frac{1+\alpha}{2}}([0,T])}\leq C\Big(\|U_R^s(1,\cdot)-U_R^\sigma(1,\cdot)\|_{C^{\frac{1+\alpha}{2}}([0,T])}+
\|\hat s_1-\hat\sigma_1\|_{C^{\frac{1+\alpha}{2}}([0,T])}\Big),
\eeaa
where $C$ depends on $\mu_1$ and the upper bounds of $\|\hat s_1\|_{C^{(1+\alpha)/2}([0,T])}$, $\|\hat\sigma_1\|_{C^{(1+\alpha)/2}([0,T])}$
and $\|U_R^\sigma(1,\cdot)\|_{C^{(1+\alpha)/2}([0,T])}$. Hence $C=C(\alpha, M,C_0)$.

Since $T\leq 1$, clearly
\[
\|\hat s_1-\hat\sigma_1\|_{C^{\frac{1+\alpha}{2}}([0,T])}\leq \|\hat s_1'-\hat\sigma_1'\|_{C([0,T])}.
\]
We also have
\[
\|U_R^s(1,\cdot)-U_R^\sigma(1,\cdot)\|_{C^{\frac{1+\alpha}{2}}([0,T])}\leq \|P\|_{C^{1+\alpha,(1+\alpha)/2}([0,1]\times [0,T])}.
\]
We thus obtain
\be\label{s-sigma}
\|\bar s'_1-\bar\sigma'_1\|_{C^{\frac{1+\alpha}{2}}([0,T])}\leq C \Big(\|P\|_{C^{1+\alpha,(1+\alpha)/2}([0,1]\times [0,T])}+\|\hat s_1'-\hat\sigma_1'\|_{C([0,T])}\Big).
\ee

Applying the $L^p$ estimate and the Sobolev embedding theorem to the problem \eqref{P}, we obtain, for some $p>1$,
\[
\begin{array}{l}
\|P\|_{C^{1+\alpha, (1+\alpha)/2}([0,1]\times[0,T])}\smallskip \\
\leq M_4\Big(\|B_1\|_{C([0,T])}\|U_{RR}^\sigma\|_{L^p([0,1]\times [0,T])}\smallskip\\
\hspace{2cm}+\|B_2\|_{C([0,T])}\|U_{R}^\sigma\|_{L^p([0,1]\times [0,T])}+\|F\|_{L^p([0,1]\times[0,T])} \Big)
\end{array}
\]
for some $M_4>0$ depending only on $\alpha$ and $M$. Due to the $W^{2,1}_p([0,1]\times [0,T])$ bound for $U^\sigma$, we
hence obtain
\be\label{P-bd}
\begin{array}{l}
\|P\|_{C^{1+\alpha, (1+\alpha)/2}([0,1]\times[0,T])}\smallskip \\
\leq M_5\Big(\|B_1\|_{C([0,T])}+\|B_2\|_{C([0,T])}+\|F\|_{L^p([0,1]\times[0,T])} \Big)
\end{array}
\ee
for some $M_5>0$ depending only on $\alpha$, $M$ and the Lipschitz constant of $f$.

By the definitions of $B_1(t),\; B_2(t)$ and $F(R,t)$, we have
\be\label{B1-bd}
\|B_1\|_{C([0,T])}\leq C\|\hat s_1-\hat \sigma_1\|_{C([0,T])},
\ee
\be\label{B2-bd}
 \|B_2\|_{C([0,T])}\leq C\Big(\|\hat s_1-\hat \sigma_1\|_{C([0,T])}
+\|\hat s_1'-\hat \sigma_1'\|_{C([0,T])}\Big),
\ee
and
\be\label{F-bd}
\|F\|_{L^p([0,1]\times [0,T])}\leq C\Big(
\|P\|_{C([0,1]\times[0,T])}+\|Q\|_{C([0, 1]\times[0,T])}+\|\hat{s}_1-\hat{\sigma_1}\|_{C[0,T]}\Big),
\ee
for some $C>0$ depending only on $M$ and the Lipschitz constants of $f$.

We next estimate $\|P\|_{C([0,1]\times[0,T])}$ and $\|Q\|_{C([0, 1]\times[0,T])}$ by using the estimate in
Lemma 2.2 of \cite{GW2}, namely
\[
\|\hat u^s-\hat u^\sigma\|_{C(\Gamma_T^1)}+\|\hat v^s-\hat v^\sigma\|_{C(\Gamma_T^2)}\leq C\sum_{i=1}^2\|\hat s_i-\hat\sigma_i\|_{C([0,T])},
\]
where
\[\Gamma_T^i:=\big\{(r,t): 0\leq r\leq\min\{\hat s_i(t),\hat\sigma_i(t)\}\big\},\; i=1,2.
\]

Without loss of generality, we may assume $\hat s_1(t)\leq \hat\sigma_1(t)$. Then for any $R\in [0,1]$ and $t\in [0,T]$,
we have
\beaa
|P(R,t)|&=&|\hat u^s(R\hat s_1(t),t)-\hat u^\sigma(R\hat\sigma_1(t), t)|\\
&\leq& |\hat u^s(R\hat s_1(t), t)-\hat u^\sigma (R \hat s_1(t),t)|+
|\hat u^\sigma(R\hat s_1(t), t)-\hat u^\sigma (R \hat \sigma_1(t),t)|\\
&\leq& C\sum_{i=1}^2\|\hat s_i-\hat\sigma_i\|_{C([0,T])}+\|\hat u^\sigma\|_{C^{1+\alpha,(1+\alpha)/2}({D_{T,\sigma}^1})}\|\hat s_1-\hat \sigma_1\|_{C([0,T])}\\
&\leq &\tilde C\sum_{i=1}^2\|\hat s_i-\hat\sigma_i\|_{C([0,T])}.
\eeaa
It follows that
\[
\|P\|_{C([0,1]\times[0,T])}\leq \tilde C\sum_{i=1}^2\|\hat s_i-\hat\sigma_i\|_{C([0,T])}.
\]

For any $R\in [0,1]$ and $t\in [0,T]$,
we have
\beaa
|Q(R,t)|&=&|\hat v^s(R\hat s_1(t),t)-\hat v^\sigma(R\hat\sigma_1(t), t)|\\
&=& |\hat v^s(R\eta(t)\hat s_2(t), t)-\hat v^\sigma (R\xi(t) \hat \sigma_2(t),t)|.
\eeaa

We now consider all the possible cases:

(i)
If $R\eta(t)\geq 1$ and $R\xi(t)\geq 1$, then we immediately obtain
\[
|Q(R,t)|=0.
\]

(ii) If $R\eta(t)<1$ and $R\xi(t)<1$, assuming without loss of generality $\hat s_2(t)\leq \hat \sigma_2(t)$, then
\beaa
|Q(R,t)|
&=& |\hat v^s(R\eta(t)\hat s_2(t), t)-\hat v^\sigma (R\xi(t) \hat \sigma_2(t),t)|\\
&\leq& |\hat v^s(R\eta(t)\hat s_2(t),t)-\hat v^\sigma(R\eta(t) \hat s_2(t),t)|\\
&&\; +|\hat v^\sigma(R\eta(t)\hat s_2(t), t)-\hat v^\sigma (R\xi(t) \hat \sigma_2(t),t)|\\
&\leq&C\sum_{i=1}^2\|\hat s_i-\hat\sigma_i\|_{C([0,T])}\\
&&\; +\|\hat v^\sigma\|_{C^{1+\alpha,(1+\alpha)/2}({D_{T,\sigma}^2})}\|\hat s_1-\hat \sigma_1\|_{C([0,T])}\\
&\leq &\tilde C\sum_{i=1}^2\|\hat s_i-\hat\sigma_i\|_{C([0,T])}.
\eeaa

(iii) If $R\eta(t)<1\leq R\xi(t)$ and $R\eta(t)\hat s_2(t)\leq\hat\sigma_2(t)$, then
\beaa
|Q(R,t)|
&=& |\hat v^s(R\eta(t)\hat s_2(t), t)-\hat v^\sigma (R\xi(t) \hat \sigma_2(t),t)|\\
&=&|\hat v^s(R\eta(t) \hat s_2(t),t)|\\
&\leq& |\hat v^s(R\eta(t)\hat s_2(t), t)-\hat v^\sigma (R\eta(t)\hat s_2(t),t)|\\
&&\; +|\hat v^\sigma (R\eta(t)\hat s_2(t),t)-\hat v^\sigma(\hat\sigma_2(t),t)|\\
&\leq&C\sum_{i=1}^2\|\hat s_i-\hat\sigma_i\|_{C([0,T])}\\
&&\;+\|\hat v^\sigma\|_{C^{1+\alpha,(1+\alpha)/2}({D_{T,\sigma}^2})}|R\eta(t)\hat s_2(t)-\hat\sigma_2(t)|.
\eeaa

From $R\eta(t)<1\leq R\xi(t)$ and $R\eta(t)\hat s_2(t)\leq\hat\sigma_2(t)$ we obtain
\beaa
|R\eta(t)\hat s_2(t)-\hat\sigma_2(t)|&\leq& \left|\frac{\eta(t)}{\xi(t)}\hat s_2(t)-\hat\sigma_2(t)\right|\\
&\leq& \|1/\xi\|_{C([0,T])}|\eta(t)\hat s_2(t)-\xi(t)\hat\sigma_2(t)|\\
&=&\|1/\xi\|_{C([0,T])}|\hat s_1(t)-\hat\sigma_1(t)|.
\eeaa

Thus in this case we also have
\[
|Q(R,t)|\leq \tilde C\sum_{i=1}^2\|\hat s_i-\hat\sigma_i\|_{C([0,T])}.
\]

(iv) If $R\eta(t)<1\leq R\xi(t)$ and $R\eta(t)\hat s_2(t)>\hat\sigma_2(t)$, then
\[
|R\eta(t)\hat s_2(t)-\hat s_2(t)|\leq |\hat \sigma_2(t)-\hat s_2(t)| \mbox{ and }
\]
\beaa
|Q(R,t)|
&=&|\hat v^s(R\eta(t) \hat s_2(t),t)|\\
&=& |\hat v^s(R\eta(t)\hat s_2(t), t)-\hat v^s (s_2(t),t)|\\
&\leq&\|\hat v^s\|_{C^{1+\alpha,(1+\alpha)/2}({D_{T,s}^2})}|R\eta(t)\hat s_2(t)-\hat s_2(t)|\\
&\leq&\|\hat v^s\|_{C^{1+\alpha,(1+\alpha)/2}({D_{T,s}^2})}|\hat s_2(t)-\hat \sigma_2(t)|\\
&\leq& \tilde C\sum_{i=1}^2\|\hat s_i-\hat\sigma_i\|_{C([0,T])}.
\eeaa

(v) If $R\eta(t)\geq 1> R\xi(t)$, we are in a symmetric situation to cases (iii) and (iv) above, so we
similarly obtain
\be\label{Q-bd}
|Q(R,t)|\leq \tilde C\sum_{i=1}^2\|\hat s_i-\hat\sigma_i\|_{C([0,T])}.
\ee

Thus in all the possible cases \eqref{Q-bd} always holds. It follows that
\[
\|Q\|_{C([0,1]\times [0,T])}\leq \tilde C\sum_{i=1}^2\|\hat s_i-\hat\sigma_i\|_{C([0,T])}.
\]

We thus obtain from \eqref{F-bd} that
\be\label{F-bd2}
\|F\|_{L^p([0,1]\times [0,T])}\leq \tilde C\sum_{i=1}^2\|\hat s_i-\hat\sigma_i\|_{C([0,T])}.
\ee

We may now substitute \eqref{B1-bd}, \eqref{B2-bd} and \eqref{F-bd2} into \eqref{P-bd} to obtain
\[
\|P\|_{C^{1+\alpha, (1+\alpha)/2}([0,1]\times[0,T])}\leq \tilde C\left(\|\hat s_1'-\hat\sigma_1'\|_{C([0,T])}+
\sum_{i=1}^2\|\hat s_i-\hat\sigma_i\|_{C([0,T])}\right).
\]
It thus follows from \eqref{s-sigma} that
\[
\|\bar s'_1-\bar\sigma'_1\|_{C^{\frac{1+\alpha}{2}}([0,T])}\leq \tilde C' \left(\|\hat s_1'-\hat\sigma_1'\|_{C([0,T])}+
\sum_{i=1}^2\|\hat s_i-\hat\sigma_i\|_{C([0,T])}\right).
\]
Since $\bar s'_1(0)-\bar\sigma_1'(0)=0$, this implies
\[
\|\bar s'_1-\bar\sigma'_1\|_{C([0,T])}\leq T^{\frac{1+\alpha}{2}}\tilde C' \left(\|\hat s_1'-\hat\sigma_1'\|_{C([0,T])}+
\sum_{i=1}^2\|\hat s_i-\hat\sigma_i\|_{C([0,T])}\right).
\]
Hence for $T>0$ sufficiently small we have
\be\label{s1-sigma1}
\|\bar s'_1-\bar\sigma'_1\|_{C([0,T])}\leq T^{\frac{1+\alpha}{2}}\hat C_1
\sum_{i=1}^2\|\hat s_i-\hat\sigma_i\|_{C([0,T])}
\ee
with $\hat C_1>0$ depending only on $\alpha, M$ and the Lipschitz constant of $f$.

In a similar manner, we can straighten $r=\hat s_2(t)$ and $r=\hat\sigma_2(t)$ to obtain
\be\label{s2-sigma2}
\|\bar s'_2-\bar\sigma'_2\|_{C([0,T])}\leq T^{\frac{1+\alpha}{2}}\hat C_2
\sum_{i=1}^2\|\hat s_i-\hat\sigma_i\|_{C([0,T])}
\ee
with $\hat C_2>0$ depending only on $\alpha, M$ and the Lipschitz constant of $g$.

Finally, using $\hat{s}_i(0)=\hat{\sigma}_i(0)=s_i^0$, $i=1,2$, we see that
\bea\label{bound by derivative}
\|\hat{s}_i-\hat{\sigma}_i\|_{C([0,T])}\leq T \|\hat{s}'_i-\hat{\sigma}'_i\|_{C([0,T])}, \ i=1,2.
\eea

Combining \eqref{s1-sigma1}, \eqref{s2-sigma2} and \eqref{bound by derivative}, we see that $\mathcal{F}$ is a contraction mapping as long as $T>0$ is sufficiently small. Hence $\mathcal{F}$ has a unique fixed point $(s_1,s_2)\in \Sigma_T^1\times \Sigma_T^2$ for such $T$.

Let $(u,v)$ be the unique fixed point of $\mathcal{G}$ in $X_T^1(s_1, s_2)\times X_T^2(s_1, s_2)$; then it is easily seen that $(u,v,s_1,s_2)$ is the unique solution
of \eqref{General FBP}.
Furthermore, \eqref{solution-estimate} holds because of \eqref{hat-u-v-estimate} and \eqref{estimate-moving bdry}.
We have now completed the proof of Theorem~\ref{thm:local existence}.
\end{proof}

By Theorem~\ref{thm:local existence} and the Schauder estimate, we see that the solution of {\bf(P)} defined for $t\in[0,T]$
is actually a classical solution.

\subsection{Global existence}

In this subsection, we show that the unique local solution of \eqref{General FBP} can be extended to all positive time if the following extra assumption is imposed:
\begin{itemize}
\item[{\bf(H2)}] There exists a positive constant $K$ such that $f(r,t,u,v)\leq K(u+v)$ and $g(r,t,u,v)\leq K(u+v)$ for $r,t, u,v\ge 0$.
\end{itemize}

\begin{thm}\label{thm:global existence}
Under the assumptions of Theorem~\ref{thm:local existence} and {\bf(H2)}, problem \eqref{General FBP} has a unique globally in time solution.
\end{thm}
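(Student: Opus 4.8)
The plan is the standard continuation argument: combine the local existence and uniqueness result, Theorem~\ref{thm:local existence}, with a priori bounds that assumption {\bf (H2)} makes available. Let $(u,v,s_1,s_2)$ be the unique solution of \eqref{General FBP} defined on its maximal existence interval $[0,T_{\max})$, obtained by iterating Theorem~\ref{thm:local existence}, and suppose for contradiction that $T_{\max}<\infty$. It suffices to produce a constant $M^*$, independent of $t$, with
\[
\|u(\cdot,t)\|_{C^2([0,s_1(t)])}+\|v(\cdot,t)\|_{C^2([0,s_2(t)])}+s_1(t)+s_2(t)\le M^*\qquad\text{for all }t\in[0,T_{\max}).
\]
Indeed, once this holds, pick $t_0\in(0,T_{\max})$ with $T_{\max}-t_0<T$, where $T=T(M^*)$ is the existence time furnished by Theorem~\ref{thm:local existence}; applying that theorem with initial time $t_0$ and initial data $(u(\cdot,t_0),v(\cdot,t_0),s_1(t_0),s_2(t_0))$ — which satisfies the structural conditions \eqref{ic} by the strong maximum principle and the Neumann condition at $r=0$ — gives a solution on $[t_0,t_0+T]\supsetneq[t_0,T_{\max}]$, which by uniqueness agrees with $(u,v,s_1,s_2)$ on $[t_0,T_{\max})$ and hence extends it beyond $T_{\max}$, contradicting maximality. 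Thus $T_{\max}=\infty$.

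The first ingredient is a uniform $L^\infty$ bound, and this is exactly where {\bf (H2)} is used. Since $f(r,t,0,v)=g(r,t,u,0)=0$ by {\bf (H1)}(ii), the maximum principle gives $u,v\ge 0$. Put $M(t):=\max\{\|u(\cdot,t)\|_{L^\infty},\|v(\cdot,t)\|_{L^\infty}\}$, which is positive and continuous. If the maximum defining $M(t)$ is attained by $u$ at an interior point of $[0,s_1(t))$ — including $r=0$, where $u_r(0,t)=0$ and smoothness give $\Delta u(0,t)=N\,u_{rr}(0,t)\le 0$ — then $d_1\Delta u\le 0$ there, so by {\bf (H2)}, $u_t\le f(r,t,u,v)\le K(u+v)\le 2KM(t)$ at that point; if it is attained at $r=s_1(t)$ the value is $0$. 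The same dichotomy holds with $u$ replaced by $v$. Hence $D^+M(t)\le 2KM(t)$, so $M(t)\le(\|u_0\|_{L^\infty}+\|v_0\|_{L^\infty})e^{2Kt}\le A$ on $[0,T_{\max})$, where $A:=(\|u_0\|_{L^\infty}+\|v_0\|_{L^\infty})e^{2KT_{\max}}$ is finite under our contradiction hypothesis.

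The key step — and the only genuinely technical one — is an upper bound for the free boundary speeds. By the Hopf lemma $s_i'(t)\ge 0$, so only an upper bound is needed, and this is obtained by the barrier argument of \cite{DL,DG}. For $s_1$, choose $M\gg 1$ (depending on $d_1$, $K$, $A$, $\|u_0\|_{C^2}$ and $s_1^0$), set $\underline\Omega:=\{(r,t):s_1(t)-M^{-1}<r<s_1(t),\ 0<t<T_{\max}\}$ (which lies in the domain of $u$ since $s_1(t)\ge s_1^0\ge M^{-1}$), and define
\[
\overline u(r,t):=A\big[\,2M(s_1(t)-r)-M^2(s_1(t)-r)^2\,\big].
\]
Using $s_1'\ge 0$, $\overline u_r\le 0$ on $\underline\Omega$, the bound $\|u\|_\infty,\|v\|_\infty\le A$, and {\bf (H2)}, a direct computation shows that for $M$ large $\overline u$ is a supersolution of the $u$-equation on $\underline\Omega$; moreover $\overline u\ge u$ on $\{r=s_1(t)-M^{-1}\}$ (the value being $A$), $\overline u(r,0)\ge A M(s_1^0-r)\ge\|u_0'\|_\infty(s_1^0-r)\ge u_0(r)$ on $[s_1^0-M^{-1},s_1^0]$, and $\overline u=0=u$ on $\{r=s_1(t)\}$. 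The comparison principle yields $u\le\overline u$ on $\underline\Omega$, hence $u_r(s_1(t),t)\ge\overline u_r(s_1(t),t)=-2AM$, so $s_1'(t)=-\mu_1 u_r(s_1(t),t)\le 2\mu_1 AM$. An identical argument bounds $s_2'$, and therefore $s_i(t)\le s_i^0+CT_{\max}$ on $[0,T_{\max})$.

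Finally, with $\|u\|_\infty$, $\|v\|_\infty$, $s_1'$, $s_2'$, $s_1$ and $s_2$ all bounded uniformly on $[0,T_{\max})$, parabolic Schauder estimates — applied after straightening the free boundaries as in the proof of Theorem~\ref{thm:local existence}, together with a routine bootstrap for the H\"older regularity of $s_1,s_2$ — give a uniform bound on $\|u(\cdot,t)\|_{C^2}+\|v(\cdot,t)\|_{C^2}$; combined with the bounds on $s_1,s_2$ this produces the constant $M^*$, completing the argument. I expect the barrier construction for the boundary speeds to be the main obstacle; the $L^\infty$ bound is immediate from {\bf (H2)} — this is precisely where the linear growth of $f$ and $g$ rules out finite-time blow-up — and everything else is standard parabolic theory and the continuation principle.
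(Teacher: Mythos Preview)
Your proposal is correct and follows essentially the same approach as the paper: assume a finite maximal time, use {\bf (H2)} to obtain an $L^\infty$ bound (the paper does this via comparison with the ODE system $U'=V'=K(U+V)$ rather than your Dini-derivative argument, but the two are equivalent and yield the same $e^{2Kt}$ growth), then bound $s_i'$ via the barrier argument of \cite{DL} (which the paper invokes by citation and you spell out), apply parabolic regularity for the $C^2$ bounds, and re-apply the local existence theorem to reach a contradiction. One cosmetic point: the paper, like most references, obtains the $C^2$ bounds only for $t\in[\epsilon,T_{\max})$ with $\epsilon>0$ fixed (regularity improves for positive time), and then restarts from an initial time in $(T_{\max}-\tau,T_{\max})$; your phrasing suggests a bound all the way from $t=0$, which is not needed and would require more care, but this does not affect the argument.
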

\begin{proof} The proof is similar to that of \cite[Theorem 2.4]{DL2}. For the reader's convenience, we present a brief proof.
Let $[0,T^*)$ be the largest time interval for which the unique solution of \eqref{General FBP} exists. By Theorem~\ref{thm:local existence}, $T_*>0$.
By the strong maximum principle, we see that $u(r,t)>0$ in $[0,s_1(t))\times[0,T_*)$ and $v(r,t)>0$ in $[0,s_2(t))\times[0,T_*)$.
We will show that $T_*=\infty$. Aiming for a contradiction, we assume that $T_*<\infty$. Consider the following ODEs
\beaa
&&dU/dt=K(U+V),\quad t>0,\quad U(0)=\|u_0\|_{L^{\infty}([0,s_1^0])},\\
&&dV/dt=K(U+V),\quad t>0,\quad V(0)=\|v_0\|_{L^{\infty}([0,s_2^0])}.
\eeaa
Take $M^*>T_*$. Clearly,
\beaa
0<U(t)+V(t)<\Big(\|u_0\|_{L^{\infty}([0,s_1^0])}+\|v_0\|_{L^{\infty}([0,s_2^0])}\Big)e^{2KM^*}:=C_1,\quad t\in[0,T_*)
\eeaa
By {\bf(H2)}, we can compare $(u,v)$ with $(U,V)$ to obtain
\beaa
\|u\|_{L^{\infty}([0,s_1(t)]\times[0,T_*))}+\|v\|_{L^{\infty}([0,s_2(t)]\times[0,T_*))}\leq C_1.
\eeaa

Next, we can use a similar argument as in \cite[Lemma 2.2]{DL} to derive
\beaa
0<s_i'(t)\leq C_2,\quad t\in(0,T_*),\ i=1,2
\eeaa
for some $C_2$ independent of $T_*$. Furthermore, we have
\beaa
s_i^0\leq s_i(t)\leq s_i^0+C_2t\leq s_i^0+C_2M^*,\quad t\in[0,T_*),\ i=1,2.
\eeaa
Taking $\epsilon\in(0,T_*)$, by standard parabolic regularity, there exists $C_3>0$ depending only on $K$, $M^*$, $C_1$ and $C_2$ such that
\beaa
\|u(\cdot,t)\|_{C^2([0,s_1(t)])}+\|v(\cdot,t)\|_{C^2([0,s_2(t)])}\leq C_3,\quad t\in[\epsilon,T_*).
\eeaa
By Theorem~\ref{thm:local existence}, there exists $\tau>0$ depending only on $K$, $M^*$ and $C_i$ ($i=1,2,3$) such that the
solution of problem \eqref{General FBP} with initial time $T_*-\tau/2$ can be extended uniquely to
the time $T_*+\tau/2$, which contradicts the definition of $T_*$. This completes the proof of Theorem~\ref{thm:global existence}.
\end{proof}
}

\subsection{Proof of Theorem~\ref{prop-trichotomy}}
\begin{proof}[Proof of Theorem~\ref{prop-trichotomy}]
Define
\beaa
s_*=R^*\sqrt{\frac{d}{r}},\quad s^*=R^*\sqrt{\frac{d}{r}}\frac{1}{\sqrt{1-k}},\quad s^{**}=R^*.
\eeaa
First, following the same lines in \cite[Theorem 2]{GW2} with some minor changes, we can prove the following three results:
\begin{itemize}
\item[(i)] If $s_{1,\infty}\leq s_*$, then $u$ vanishes eventually. In this case, $v$ spreads successfully (resp. vanishes eventually) if $s_{2,\infty}>s^{**}$ (resp. $s_{2,\infty}\leq s^{**}$),
 \item[(ii)] If $s_*<s_{1,\infty}\leq s^*$, then $u$ vanishes eventually, and $v$ spreads successfully.
 \item[(iii)] If $s_{1,\infty}> s^*$, then  $u$ spreads successfully.
\end{itemize}

\medskip

Next, we shall show
\begin{itemize}
\item[(iv)] If $s_{1,\infty}> s^*$ and $(\mu_1,\mu_2)\in \mathcal{B}$, then  $u$ spreads successfully and $v$ vanishes eventually.
\end{itemize}
By a simple comparison consideration we see that
\bea\label{prop1-upper est}
\limsup_{t\to\infty}\frac{s_2(t)}{t}\leq s^*_{\mu_2}.
\eea
By (iii), we see that $u$ spreads successfully and so $s_{1,\infty}=\infty$.
%We now take $\epsilon>0$ small and $T=T(\epsilon)\gg1$
%such that $v(r,t)\leq1+\epsilon$ for all $r\geq0$ and $t\geq T$ and
%\beaa
%s_1(T)>R^*\sqrt{\frac{d}{r}}\frac{1}{\sqrt{1-k(1+\epsilon)}}\quad (\mbox{since $s_{1,\infty}=\infty$}).
%\eeaa
It follows that there exists $T\gg1$ such that
\beaa
s_1(T)\geq R^*\sqrt{\frac{d}{r}}\frac{1}{\sqrt{1-k}}.
\eeaa
This allows us to use a similar argument to that leading to \eqref{estimate-1} but taking $T$ as the initial time to
obtain
\bea\label{prop1-lower est}
\liminf_{t\to\infty}\frac{s_1(t)}{t}\geq c^*_{\mu_1}.
\eea

To show that $s_{2,\infty}<\infty$ we argue by contradiction and assume $s_{2,\infty}=\infty$. Since $(\mu_1,\mu_2)\in\mathcal{B}$,
from \eqref{prop1-lower est} and \eqref{prop1-upper est} we can find $\tau\gg1$ and $\hat{c}$ such that
\beaa
s^*_{\mu_2}<\hat{c}<c^*_{\mu_1},\quad s_2(t)<\hat{c}t<s_1(t)\quad\mbox{for all $t\geq\tau$}.
\eeaa
Then by the same process used in deriving the second identity in \eqref{s1-u}, we have
\be\label{prop1-u-to-1}
\lim_{t\to\infty} \Big[\max_{r\in[0, \hat{c}t]} |u(r,t)-1|\Big]= 0.
\ee
Also, noting $h>1$ and $s_2(t)<\hat{c}t$, there exist $\hat{\tau}> \tau$ such that
\beaa
v_t=\Delta v+v(1-v-hu)\leq \Delta v,\quad 0<r<s_2(t),\ t\geq \hat{\tau},
\eeaa
which leads to $s_{2,\infty}<\infty$ by simple comparison (cf. \cite[Theorem 3]{GW2}).
This reaches a contradiction. Hence we have proved $s_{2,\infty}<\infty$. Finally,
using $s_{2,\infty}<\infty$ we can show
$\lim_{t\to\infty}\|v(\cdot,t)\|_{C([0,s_2(t)])}=0$ (cf. \cite[Lemma 3.4]{GW2}). Hence
$v$ vanishes eventually and then (iv) follows.

 The conclusions of  Theorem~\ref{prop-trichotomy} follow easily from (i)-(iv).
\end{proof}

\bigskip

\noindent{\bf Acknowledgments.}
The authors are grateful to the referee for  valuable
suggestions on improving the presentation of the paper.
YD was supported by the Australian Research Council and
CHW was partially supported by the Ministry of Science and Technology of Taiwan and National Center for Theoretical Science (NCTS).
This research was initiated during the visit of CHW to the University of New England, and he is grateful for the hospitality.

%%%%%%%%%%%%%%%%%%%%%%%%%%%%%%%%%%%%%%%%%%%

\end{document}